\documentclass[11pt]{article}

\usepackage{amsmath,amssymb,amsthm,amsfonts,enumerate,graphicx}
\usepackage{amssymb, amsfonts,color}
\usepackage{amsmath}
\usepackage{latexsym}
\usepackage{mathrsfs}
\usepackage{graphicx}
\usepackage{enumerate}

\usepackage[backref=page]{hyperref}
\usepackage{hyperref}
\newtheorem{theorem}{Theorem}[section]
\newtheorem{lemma}[theorem]{Lemma}
\newtheorem{proposition}[theorem]{Proposition}

\newtheorem{definition}{Definition}[section]
\newtheorem{remark}{Remark}[section]

\newtheorem{example}[theorem]{Example}
\numberwithin{equation}{section}
\renewcommand{\baselinestretch}{1.0}

\begin{document}

\title{Spectrality of product-form self-similar measures and tiles
\footnotetext{$^*$~Corresponding author.}
\footnotetext{ 2020  {\it Math Subject Classifications}: Primary 28A25, 28A80; Secondary 42C05, 46C05.}
\footnotetext{ {\it Key words and phrases}: Spectrality; self-similar measure; tile; orthonormal exponential.}
\footnotetext{J.C. Liu is supported  by the Hunan Provincial Natural Science Foundation, Grant No.2024JJ3023 and J. Zheng is supported  by the China Postdoctoral Science Foundation under Grant Number 2025M783128.}
\footnotetext{Emails: jcliu@hunnu.edu.cn, wjj2021hnsd@163.com, zhengjiamath@163.com}}
\author{Jing-Cheng Liu$^1$, Jia-Jie Wang$^{*,1}$, Jia Zheng$^2$ \\
   \small {1.} Key Laboratory of Computing and Stochastic Mathematics \\
   \small  (Ministry of Education), School of Mathematics and Statistics,\\
   \small Hunan Normal University, Changsha, Hunan 410081, P.R. China\\
   \small {2.} School of Mathematics and Statistics, \\
   \small Wuhan University, Wuhan Hubei 430072, P.R. China }

\date{\today}
\maketitle

\begin{abstract}
This paper studies the Fourier properties of self-similar measures and tiles generated by digit sets of product-form. Let  $0 <\rho <1$ be a real number and  let $D$  be the direct sum of two consecutive integer sets:
$$D=\{0,1,\cdots,N-1\}\oplus m\{0,1,\cdots, L-1\},$$
where  $N, m, L \in \mathbb{N}^{*}$ with 
$N, L \geq 2$.
The pair $(\rho,D)$ determines the self-similar iterated function system (IFS) $	\{\phi_d(\cdot)=\rho(\cdot+d)\}_{d \in D}$. Let $\mu_{\rho,D}$ and $T$ be the associated self-similar measure and self-similar set, respectively.
We first prove that $L^2(\mu_{\rho,D})$ admits an exponential orthonormal basis if and only if $\rho^{-1}=p\in\mathbb{N}$ satisfies $N\mid p$, $L\mid p$ and $N\mid \frac{m}{\gcd(m,p^d)}$,
where
$$d=\max\left\{i:\gcd\left(\frac{mL}{\gcd(mL,p^i)},L\right)\neq 1,i\in\mathbb{N}\right\}.$$  
This result extends a series of previous studies, including the cases where $N,L$ are primes [An-Wang, {\it J. Funct. Anal.}, 2021] and $N=L$ [Liu-Peng-Wu, {\it J. Math. Anal. Appl.}, 2019]. Furthermore, 
in the context of the Fuglede conjecture, we show that when $\rho^{-1} =\#D= NL$, the space $L^2(\chi_T dx)$  admits an exponential orthonormal basis if and only if $T$  is a translation tile of $\mathbb{R}$.


\end{abstract}

\renewcommand{\baselinestretch}{1.0}

\section{Introduction}

Let $\mu$ be a Borel probability measure with compact support on $\mathbb{R}^n$. We say that $\mu$ is a \emph{spectral measure} if there exists a countable subset $\Lambda$ of $\mathbb{R}^n$ such that the set of exponential functions
\[
\mathcal{E}_{\Lambda}:=\big\{e^{2\pi i \langle \lambda, x \rangle} : \lambda\in \Lambda\big\}
\]
forms an orthonormal basis for $L^2(\mu)$. In this case, we call $\Lambda$ a \emph{spectrum} for $\mu$. If a spectral measure $\mu$ is the normalized Lebesgue measure restricted to a Lebesgue measurable set $\Omega\subseteq\mathbb{R}^n$ of positive and finite measure, then we say $\Omega$ is a \emph{spectral set}. It is well known in classical Fourier analysis that the unit cube $[0,1]^n$ is a spectral set with spectrum $\mathbb{Z}^n$. The study of spectral sets is closely related to the celebrated Fuglede conjecture \cite{Fuglede1974}, which states that:
\begin{center}
	\textit{$\Omega$ is a spectral set if and only if $\Omega$ tiles $\mathbb{R}^n$ by translations.}
\end{center}
In his seminal paper, Fuglede showed that certain planar domains, such as triangles and disks, are not spectral. The conjecture was later disproved in both directions for dimensions $n \ge 3$ by Tao \cite{Tao2003}, and by Kolountzakis and Matolcsi \cite{Kol2006',Matolcsi2005}. At present, the conjecture remains completely open in dimensions $n = 1$ and $n = 2$. Although Fuglede's conjecture fails in higher dimensions, positive results have been established in several special cases (see \cite{Laba2001,Fan2019,IMP_2017,Shi_2019} and the references therein). In particular, Lev and Matolcsi \cite{Lev2022} proved that Fuglede's conjecture holds true for all convex domains. Moreover, they demonstrated that every spectral set is a \emph{weak tile}, which serves as a measure-theoretic relaxation of the classical notion of translation tiling.

While spectral sets have been extensively studied, achieving a complete classification of spectral measures remains challenging. It was shown in \cite{HLL_2013} that any spectral measure $\mu$ must be of pure type; namely, $\mu$ is either discrete, singularly continuous, or absolutely continuous with respect to the Lebesgue measure. In the discrete case, the spectrality of $\mu$ is closely related to integer tilings \cite{HLL_2013}. In the absolutely continuous case, it is necessary \cite{DL_2014} that $\mu$ coincides with the Lebesgue measure on some measurable set $\Omega$. Consequently, the study of absolutely continuous spectral measures naturally reduces to the original Fuglede conjecture. This reduction  motivates a separate, dedicated investigation into singular spectral measures. 

The first non-atomic singular spectral measure, the middle-fourth Cantor measure, was elegantly constructed by Jorgensen and Pedersen in 1998 \cite{Jorgensen-Pedersen_1998}. Fourier analysis on singular measures exhibits phenomena that are different from those in the classical absolutely continuous setting. A key distinction lies in the distribution of the spectrum: unlike absolutely continuous measures, the spectrum of a singular measure can be arbitrarily sparse \cite{AL_2023}. Despite this distinguish, Strichartz \cite{Strichartz_2000,Strichartz_2006} proved that for certain spectra of such Cantor measures, the associated Fourier series possess even better convergence properties than those on the standard unit interval.

Accordingly, the focus of this paper is to investigate the spectrality of singular measures, followed by a further exploration of the spectral and tiling properties of the associated self-similar sets.

\subsection{Spectrality of self-similar measure with product-form digit sets}
Let us recall the standard definitions  in fractal geometric \cite{Falconer_1990}.
Let $\{\phi_d(x)\}_{d\in D}$ be an {\it iterated function system} (IFS) defined by
	\begin{equation*}\label {eq(1.1)}
		\phi_d(x)=\rho(x+d),
	\end{equation*}
	where $ x\in \mathbb{R}, 0<\rho<1$ and  $D\subset \mathbb{R}$ is a finite set.
	The {\it self-similar measure} is the unique probability measure $\mu:=\mu_{\rho,D}$ satisfying
	\begin{equation}\label{1.1}
		\mu=\frac{1}{\# D}\sum_{d\in D}\mu\circ\phi_d^{-1},
	\end{equation}
	where $\#D$ is the cardinality of the digit set $D$. Such a measure $\mu_{\rho,D}$ is supported on the {\it self-similar set} (or {\it attractor}) $ T(\rho,D)$, which is the unique  nonempty compact set satisfying
	\begin{equation*}
		T(\rho,D)=\bigcup_{d\in D}\phi_d(T(\rho,D)).
	\end{equation*}
	Moreover, $T(\rho,D)$ can be expressed by the following radix expansion
	\begin{align}\label{eqa}
		T(\rho,D)=\left\{\sum_{j=1}^{\infty}\rho^{j}d:d\in D \ {\rm for \ all} \ j\in\mathbb{N}\right\}.
	\end{align}
 Following the work of Jorgensen and Pedersen \cite{Jorgensen-Pedersen_1998}, Hu and Lau \cite{Lau2008} studied infinite orthogonal sets of exponential functions for the Bernoulli convolution measures $\mu_{\rho,\{0,1\}}$ with $0<\rho<1$.
 Dai \cite{Dai2012} gave a complete description of the spectrality of $\mu_{\rho,\{0,1\}}$. Later, Dai, He and Lau extended $\mu_{\rho,\{0,1\}}$ to the $N$-Bernoulli convolution measures $\mu_{\rho,D}$, where $D=\{0, 1, \cdots, N-1\}$ is a consecutive digit set for any positive integer $N\geq 2$. They \cite{Dai-He-Lau_2014}  proved that $\mu_{\rho,D}$ is a spectral measure if and only if $\rho^{-1}=p$ is an integer and $N$ divides $p$.
Indeed, a key observation is that the spectrality of $\mu_{\rho,D}$ can be reformulated within the setting of Hadamard triples. In some sense, the spectrality of the measure is closely to the existence of the Hadamard triple.

\begin{definition}\label{2.5}
	Let $R \in M_n(\mathbb{Z})$ be an expanding matrix\footnote{ $R\in GL_n(\mathbb{R})$ is an expanding matrix means that all the eigenvalues of $R$ are larger than one in module.} and $D,L \subset \mathbb{Z}^n$ be two finite digit sets with the same cardinality, $i.e.,\#D=\#L$. We say that $(R,D,L)$ is a Hadamard triple (or $(R,D)$ is an admissible pair) if the matrix
	\begin{align}
		H=\frac{1}{\sqrt{\#D}} \left( e^{2 \pi i \langle M^{-1}d,l \rangle} \right)_{d \in D,l \in L}   \notag
	\end{align}
	is unitary, i.e., $H^{\ast}H=I_n$, where $H^{\ast}$ means the transposed conjugate of $H$ and $I_n$ denotes the $n\times n$ identity matrix.
\end{definition}

Jorgensen and Pedersen \cite{Jorgensen-Pedersen_1998} proved that if $(R,D,L)$ is a Hadamard triple, then
$\mathcal{E}_{\Lambda(R,L)}$ admits an infinite orthogonal system in $L^2(\mu_{R^{-1},D})$, where
\[
\Lambda(R,L)=\left\{\sum_{j=0}^{k-1} R^{*j}l_j:k\geq1,  l_j\in L \right\}.
\]
Dutkay and Jorgensen \cite{DJ_2007,DJ_2009} conjectured that $\mu_{R^{-1},D}$ is a spectral measure whenever $(R,D)$ is admissible.
This conjecture was first proved on $\mathbb{R}$ by \L aba and Wang \cite{LaW_2002}.
In higher dimensions, the situation is more complex.
Partial results were obtained under additional assumptions in \cite{DJ_2007,Strichartz_2000}.
This conjecture was finally solved by Dutkay, Haussermann and Lai \cite{Dut-Hau-Lai2019}.
Although Hadamard triples are sufficient to generate spectral self-similar measures, not all spectral self-similar measures can be generated by Hardamard triples.

\begin{example}[\cite{DJ_2009_2}]\label{examp1.2}
	Let \( R = 4 \) and \( D = \{0, 1, 8, 9\} \). Then $\mu_{4^{-1},D}$ coincides with the Lebesgue measure restricted to \( T = [0,1] \cup [2,3] \), and  $T$ is spectral with a spectrum \( \mathbb{Z} + \{0,1/4\} \). However, since the elements of $D$ are not distinct modulo 4, there does not exist any integer $L$ such that \( (R, D, L) \) forms a Hadamard triple.
\end{example}

As illustrated in Example \ref{examp1.2}, although the set \( D=\{0,1,8,9\} \) falls outside the scope of classical Hadamard triples, it exhibits a hidden structure and can be decomposed into
\[
D=\{0,1\}\oplus 2^{3}\{0,1\}.
\]
Here, the symbol $\oplus$ denotes the direct sum, meaning that each element in $D$ has a unique representation. This representation places $D$ within a broader family of digit sets characterized by a  direct sum structure. 
For convenience, in what follows, we use the symbol $D_n$ to denote the consecutive digit set $\{0,1,\dots,n-1\}$ for any positive integer $n \in \mathbb{N}^*$. Under this setting, Liu et al. \cite{Liu-Peng-Wu_2019} investigated the spectrality of self-similar measures $\mu_{\rho,D}$ generated by $\rho^{-1}=N^{q}$ and the specific direct sum structure digit set
$
	D=D_N\oplus N^{p}D_N,
$
where $q,p \ge 1$ and $N \ge 2$ are integers. They proved that $\mu_{\rho,D}$ is a spectral measure if and only if $q$ does not divide $p$. 

Recently, An and Wang \cite[Theorem 1.7]{An2021} advanced this direction by characterizing a class of self-similar spectral measures arising from the so-called strict product-form digit sets. Recall that a digit set $D$ with $\#D=b$ is said to be a \emph{strict product-form digit set with respect to $b$} if $D=E_0\oplus b^{\ell} E_1$, where $ \ell\geq 0$ and $E_0\oplus E_1=\{0,1,2,\dots, b-1\}$. For such sets, they proved the following result:
\begin{theorem}[\cite{An2021}]\label{A}
	Let \( 0 < \rho < 1 \) and let \( p, q \) be primes. Let \( D \) be a strict product-form digit set with respect to \( pq \). Then \( \mu_{\rho,D} \) is a spectral measure if and only if \( \rho^{-1} = p^{l_1}q^{l_2}t_1 \) and \( D \) is a CPF digit set\footnote{A finite set \( \mathcal{D} \subset \mathbb{Z} \) is called a complementing product-form (CPF) digit set with respect to \( b \) if  
		$ \mathcal{D} = \mathcal{D}_0 \oplus b^{l_1} \mathcal{D}_1 \oplus \cdots \oplus b^{l_k} \mathcal{D}_k $
		and \( \tilde{\mathcal{D}} = \mathcal{D}_0 \oplus \mathcal{D}_1 \oplus \cdots \oplus \mathcal{D}_k \) tiles \( \mathbb{Z}_b \), where \( 0 \leq l_1 \leq \cdots \leq l_k \).} respect to \( p^{l_1}q^{l_2} \), where \( l_1, l_2, t_1 \) are positive integers and \( \gcd(t_1, pq) = 1 \).
\end{theorem}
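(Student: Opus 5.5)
The plan is to work entirely with the zero set of the Fourier transform. Write $b=pq$ and $D=E_0\oplus b^{\ell}E_1$ with $E_0\oplus E_1=D_b$. The mask polynomial $D_b(x)=\prod_{1\ne s\mid b}\Phi_s(x)=\Phi_p\Phi_q\Phi_{pq}$ splits between $E_0(x)$ and $E_1(x)$. By the Coven--Meyerowitz description of tiles of $\mathbb Z_{pq}$, each cyclotomic factor lies in exactly one of them. Hence
\[
\mathcal Z(m_D)=\mathcal Z(m_{E_0})\cup b^{-\ell}\mathcal Z(m_{E_1}),\qquad \mathcal Z(m_{E_i})=\bigcup_{s\in S_i}\tfrac{1}{s}(\mathbb Z\setminus s\mathbb Z),
\]
where $S_0\sqcup S_1=\{p,q,pq\}$. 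The zero set of $\hat\mu_{\rho,D}(\xi)=\prod_{j\ge1}m_D(\rho^j\xi)$ is then $\bigcup_{j\ge1}\rho^{-j}\mathcal Z(m_D)$. Every argument below reduces to arithmetic on these explicit rational sets.

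\emph{Necessity.} Assume $\Lambda\ni0$ is a spectrum, so that $\Lambda-\Lambda\subset\mathcal Z(\hat\mu)$.
\begin{enumerate}[(1)]
\item Since $\Lambda$ is infinite, pick three elements of $\Lambda$ and compare their pairwise differences. This forces $\rho^{-1}$ to be rational, say $\rho^{-1}=a/c$ in lowest terms.
\item Exclude $c>1$. Here I would follow the Dai--He--Lau argument for $N$-Bernoulli convolutions: when $c>1$, every orthogonal set is finite, or at least not complete. Completeness is tested via $Q(\xi)=\sum_{\lambda}|\hat\mu(\xi+\lambda)|^2\equiv1$, together with the self-similar decomposition $Q(\xi)=\sum_{d}|m_D(\rho\xi+\cdot)|^2Q(\cdot)$.
\item Once $\rho^{-1}=t\in\mathbb N$, show that $p\mid t$ and $q\mid t$. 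If, say, $p\nmid t$, then the denominators $p$ coming from $\Phi_p$ can never be absorbed by powers of $t$. So all of $\Lambda$ lives in a sublattice on which only the $q$- and $pq$-type zeros are used, and the Jorgensen--Pedersen style counting (Łaba--Wang) shows $Q\not\equiv1$.
\item Write $t=p^{l_1}q^{l_2}t_1$ with $\gcd(t_1,pq)=1$. The remaining condition is that $D$ is CPF with respect to $p^{l_1}q^{l_2}$. This amounts to saying that the shift $b^{\ell}$ in front of $E_1$, read $p$-adically and $q$-adically against $t$, does not make the zeros of $m_{E_0}$ and $b^{-\ell}m_{E_1}$ land at the same level $\rho^{-j}$.
\end{enumerate}

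\emph{The obstacle.} The hardest step is (4), i.e.\ showing that overlap of levels destroys spectrality. What I would try first is the Liu--Peng--Wu technique for $D_N\oplus N^pD_N$. If two cyclotomic zero families coincide at some level $j$, then the measure contains a convolution factor $\delta_{E}$ with $\#E<$ the number of available orthogonal directions, or equivalently a factor that is not spectral. Then use that a convolution $\mu=\mu_1*\mu_2$ with equi-positive structure and a non-spectral finite factor cannot be spectral. Concretely, I would regroup $\mu_{\rho,D}$ as an infinite convolution of discrete measures $\delta_{t^{-j}E_0}*\delta_{t^{-j}b^{\ell}E_1}$ and re-pair the factors so that the same prime power appears twice. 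This produces a finite block whose mask has fewer than $\#$block zeros modulo the appropriate integer. The counting $\#\Lambda\cap(\text{block})\le$ (number of residue classes) then gives the contradiction via $Q(\xi)<1$.

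\emph{Sufficiency.} Suppose the CPF condition holds with $\rho^{-1}=p^{l_1}q^{l_2}t_1$. Then $D=\mathcal D_0\oplus b_0^{k_1}\mathcal D_1$ with $b_0=p^{l_1}q^{l_2}$, and $\mathcal D_0\oplus\mathcal D_1$ tiles $\mathbb Z_{b_0}$. I would regroup the infinite convolution into blocks of $k_1+1$ consecutive scales, so that each block's digit set is a complete tile modulo the block scale. For such a block, $(R=t^{k_1+1},\ \text{block digits},\ L)$ is a Hadamard triple for a suitable $L$ built from the dual tiling set (Coven--Meyerowitz (T1),(T2) hold since $\#D$ has two prime factors). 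The regrouped measure is then a self-similar measure of a Hadamard triple, up to the factor $t_1$, which is coprime to $pq$ and merely rescales the spectrum. By Łaba--Wang \cite{LaW_2002} it is spectral.
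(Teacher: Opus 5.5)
The paper does not prove this result itself. It quotes it from An--Wang, and its own route to it is Theorem~\ref{1.2}, which contains it (see remark (3) after Theorem~\ref{1.2}). Measured against that proof, your proposal has two genuine gaps and two smaller errors.

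\textbf{Sufficiency: the blocking step fails.} Regrouping consecutive scales cannot in general make $\mu_{\rho,D}$ the measure of a Hadamard triple. If $(R,B,L)$ is Hadamard with $L\subset\mathbb Z$, the digits of $B$ must be distinct modulo $R$, since two digits congruent mod $R$ give identical rows. Take $p=2$, $q=3$, $\rho^{-1}=6$ and $D=\{0,1\}\oplus 6\{0,2,4\}=\{0,1,12,13,24,25\}$. This $D$ is CPF with respect to $6$, with $\mathcal D_0=\{0,1\}$, $\mathcal D_1=\{0,2,4\}$, $k_1=1$. Moreover $\mu_{\rho,D}=\delta_{\{0,2,4\}}\ast\mu_{1/6,D_6}$ is normalized Lebesgue measure on $[0,1]\cup[2,3]\cup[4,5]$, so it is spectral. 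Yet every block $D\oplus 6D\oplus\cdots\oplus 6^{k-1}D$ has $6^k$ elements, all lying in the classes $0,1 \bmod 6$. So they are never distinct mod $6^k$, which is exactly the phenomenon of Example~\ref{examp1.2}.

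What is missing is a \emph{shifted} pairing. With $t=\rho^{-1}=b_0t_1$, pair $\delta_{t^{-j}\mathcal D_0}$ with
$\delta_{t^{-(j+k_1)}b_0^{k_1}\mathcal D_1}=\delta_{t^{-j}t_1^{-k_1}\mathcal D_1}$. After rescaling by $t_1^{k_1}$, this gives $\delta_F\ast\mu_{t^{-1},\,t_1^{k_1}\mathcal D_0\oplus\mathcal D_1}$, where the unpaired finite factor is $F=\mathcal D_1\oplus t\mathcal D_1\oplus\cdots\oplus t^{k_1-1}\mathcal D_1\subset\mathbb Z$. In the example this is precisely $\delta_{\{0,2,4\}}\ast\mu_{1/6,D_6}$. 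Only the tail is generated by an admissible pair, so two further steps are needed:
\begin{enumerate}[(i)]
\item a spectrum $\Lambda_2\subset\mathbb Z$ of the tail, on which $\hat\delta_F$ is periodic; the paper obtains it from $\mathcal Z=\emptyset$ via Proposition~\ref{2.10} and Theorem~\ref{2.12};
\item gluing $\Lambda_1\oplus\Lambda_2$ by checking $Q\equiv1$.
\end{enumerate}
This is the structure of the paper's proof of Theorem~\ref{thm3.3}.

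\textbf{Necessity: step (4) has no argument.} This step carries the whole theorem. ``A non-spectral finite factor kills spectrality'' is not an available theorem, and you never exhibit such a factor. The tool that does exist is Theorem~\ref{2.20} via zero-set containment (Remark~\ref{2.30}); the paper uses it only for $N,L\mid p$.

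Your description of the obstruction is also off: it is not a coincidence of zero families at one level. Take $D=D_p\oplus p^2q\,D_q$ (so $E_0=D_p$, $E_1=pD_q$, $\ell=1$) with $p\neq q$ and $\rho^{-1}=p^2q$; then $\mu_{\rho,D}$ is not spectral. The clash is between levels $1$ and $2$:
$Z(\hat\delta_{\rho D_p})=pq(\mathbb Z\setminus p\mathbb Z)$ and $Z(\hat\delta_{\rho^2p^2qD_q})=p^2(\mathbb Z\setminus q\mathbb Z)$. Every difference of points from these two sets is $pu$ with $\gcd(u,pq)=1$, and such a point lies outside $Z(\hat\mu_{\rho,D})$.

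The paper's proof of Theorem~\ref{thm3.2} rests on two ingredients your sketch lacks (in its notation, $p=\rho^{-1}$):
\begin{enumerate}[(i)]
\item Proposition~\ref{empty-set}: a bi-zero set cannot meet both $R_1\setminus(S_1\cup S_2)$ and $S_1\setminus(R_1\cup S_2)$. Here $R_1$ comes from $\tilde{d}D_{N_1}$ at level $1$, and $S_1$ from $mL'D_{L_1}$ at level $d+1$.
\item An explicit nonzero $F=\frac1{N_1}\bigl(h_0-h_{p^{-1}\tilde d}\bigr)\in L^2(\mu_{\rho,D})$, built from Radon--Nikodym derivatives of the translates $\delta_a\ast\mu_1$. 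It satisfies $\langle F,e_\lambda\rangle=\frac1{N_1}\bigl(1-e^{-2\pi i\tilde d\lambda/p}\bigr)\hat\mu_1(\lambda)=0$ on $\Lambda$.
\end{enumerate}
A ``$Q(\xi)<1$ by counting'' remark does not replace these.

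\textbf{Smaller errors.} Step (1) is false as stated. Orthogonality only forces some power $\rho^{-r}$ to be rational: every orthogonal set of $\mu_{1/t,D}$ is orthogonal for $\mu_{t^{-1/2},D}$, because $Z(\hat\mu_{t^{-1/2},D})\supset Z(\hat\mu_{1/t,D})$. Integrality has to come from Theorem~\ref{2.4}, as in Proposition~\ref{pro1.1}. Also, your zero-set formula is wrong for $s=pq$. The zeros of $\Phi_{pq}(e^{2\pi ix})$ are the points $a/(pq)$ with $\gcd(a,pq)=1$, not all of $\frac1{pq}(\mathbb Z\setminus pq\mathbb Z)$.
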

While strict product-form digit sets have been well studied, restricting the parameters $p,q$ to primes limits their generality. Inspired by this, we consider a class of more general product-form digit sets: 
\begin{equation}\label{eq1.3}
	D = D_N \oplus mD_L,
\end{equation}
where $N, m, L \in \mathbb{N}^{*}$ with $N, L \geq 2$. To ensure our definition is well-defined, we require $m \ge N$. 
We point out that the terminology ``product-form'' was originally coined by Lagarias and Wang \cite{LW_1996_2} in the context of self-affine tiles. In our work, we adopt the term \emph{product-form} to emphasize that our model inherits the structural essence (the direct sum of scaled blocks). 
The motivation for studying the product-form digit sets defined in \eqref{eq1.3} is twofold. 
	First, this class of digit sets naturally generalizes the strict product-form digit sets. For instance, the digit set studied by An and Wang \cite{An2021} can be expressed in our work as:
	\begin{equation}\label{eq1.4}
		D=D_p\oplus (pq)^l p D_q,\quad l\geq 0, \text{ where } p,q \text{ are primes}.
	\end{equation}
	Second, these product-form digit sets provide a useful perspective for considering spectrality problems in other iterated function systems (IFSs), such as those with alternating contraction ratios. For example, Wu \cite{Wu2024} studied the spectrality of measures generated by the alternating contractive IFS $\{ \phi_d(x)=(-\rho)^d(x+d) \}_{d\in D_{2N}}$. An  observation in  \cite[Proposition~4.2]{Wu2024} is that this problem reduces to analyzing the measure $\mu_{\rho,D}$ defined by \eqref{1.1}, where
	$
	D = D_N \oplus \left( N - \frac{1+\rho}{2} \right) D_2 .
	$
	This shows that such spectrality problems can  be reduced to the study of product-form digit sets. 
	Consequently,  characterizing the spectrality of measures associated with product-form digit sets is not merely a  generalization, but rather a step toward understanding the spectral properties of a much broader class of fractal measures.

Note that when $N=1$ or $L=1$, the spectrality of $\mu_{\rho,D}$ follows directly from the results of Dai, He and Lau \cite{Dai-He-Lau_2014}. Therefore, throughout this paper,  we restrict our attention to the case where $N \ge 2$ and $L \ge 2$. Our main result is stated as follows.

\begin{theorem}\label{1.2}
	Let $\mu_{\rho,D}$ be the self-similar measure defined by \eqref{1.1}, where $0<\rho<1$ and $D$ is given by \eqref{eq1.3}. Then $\mu_{\rho,D}$ is a spectral measure if and only if $\rho^{-1}=p\in\mathbb{N}$ satisfies
	\[
	N\mid p,\quad L\mid p,\quad \text{and}\quad N\mid \frac{m}{\gcd(m,p^d)},
	\]	
	where
	\begin{equation}\label{2.11}
		d=\max\left\{i \in \mathbb{N} : \gcd\left(\frac{mL}{\gcd(mL,p^i)},L\right)\neq 1\right\}.
	\end{equation}
\end{theorem}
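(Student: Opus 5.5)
We will work throughout with the Fourier transform
\[
\widehat{\mu}_{\rho,D}(\xi)=\prod_{j=1}^\infty M_D(\rho^j\xi),\qquad M_D(\xi)=\frac{1}{NL}\sum_{d\in D}e^{2\pi i d\xi}=M_{D_N}(\xi)\,M_{D_L}(m\xi).
\]
Its zero set is
\[
\mathcal Z(\widehat\mu)=\bigcup_{j\ge1}\rho^{-j}\Big(\tfrac1N(\mathbb Z\setminus N\mathbb Z)\ \cup\ \tfrac1{mL}(\mathbb Z\setminus L\mathbb Z)\Big).
\]
A set $\Lambda\ni 0$ is orthogonal iff $\Lambda-\Lambda\subset \mathcal Z(\widehat\mu)\cup\{0\}$. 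The proof splits into necessity and sufficiency.

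\textbf{Necessity.} First we show that spectrality forces $\rho^{-1}$ to be an integer. The argument follows the Dai--He--Lau approach for $D_N$. An infinite orthogonal set $\Lambda$ exists, so some differences $\lambda-\lambda'$ lie in $\rho^{-j}\frac1N\mathbb Z$ and others in $\rho^{-k}\frac{1}{mL}\mathbb Z$. From triples of such elements, the rationality argument gives that $\rho^{-1}$ is algebraic with $\rho^{-s}\in\mathbb Q$ for some $s$. A counting/density argument then gives $\rho^{-1}=p\in\mathbb N$; this is the standard route via $\sum_{\lambda}|\widehat\mu(\xi+\lambda)|^2\equiv1$. I expect to cite or adapt the Dai--He--Lau and An--Wang arguments here rather than redo them.

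Next, with $p$ an integer, we rescale so that $\Lambda\subset\frac1{NL m}\mathbb Z$ up to harmless normalisation. We then analyse the $p$-adic/prime structure of $N,L,m$.
\begin{itemize}
\item To get $N\mid p$ and $L\mid p$, we use that $\mu_{\rho,D}=\mu_{\rho,D_N}*\mu_{\rho,mD_L}$ and that the spectrum must, at each level, realize a complete residue system modulo $N$ for the $D_N$ factor, resp. modulo $L$ for the $D_L$ factor. If $N\nmid p$, some prime power of $N$ is not absorbed, and mutual orthogonality then caps $\#\Lambda$-clusters below what the identity $\sum|\widehat\mu(\xi+\lambda)|^2=1$ requires, as in the $D_N$ case.
\item For the condition $N\mid m/\gcd(m,p^d)$, the integer $d$ is exactly the last scale at which the zeros $\frac{1}{mL}p^{i}(\mathbb Z\setminus L\mathbb Z)$ still contribute genuinely non-integer frequencies that interact with $L$. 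If $N\nmid m/\gcd(m,p^d)$, the zero sets coming from $D_N$ at some level coincide with those of $D_L$ at level $d$. This overlap means the $D_N$-block and the $mD_L$-block ``collide'' as in Example~\ref{examp1.2} when $q\mid p$ in Liu--Peng--Wu. The consequence is that one layer of the tree of spectral digits has too few admissible branches. We then derive a contradiction through the completeness criterion, taking a $\xi$ where $\sum|\widehat\mu(\xi+\lambda)|^2<1$.
\end{itemize}

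\textbf{Sufficiency.} We construct the spectrum explicitly by regrouping the IFS. Assume $N\mid p$, $L\mid p$ and $N\mid m/\gcd(m,p^d)$. Write $m=p^{k}m'$, with $k$ chosen so that the factor $mD_L$ sits at level $d$ or $d+1$. Iterating the IFS $d+1$ times gives
\[
\mu_{\rho,D}=\mu_{p^{-(d+1)},D^{(d+1)}},\qquad D^{(d+1)}=D+pD+\dots+p^{d}D.
\]
We then exhibit an $L'$ such that $(p^{d+1},D^{(d+1)},L')$ is a Hadamard triple, possibly after conjugating by an integer factor coprime to $p$. The candidate is $L'=\frac{p^{d+1}}{N}D_N\oplus\cdots$, built level by level, placing the $L$-block at the level where $\frac{mL}{\gcd(mL,p^i)}$ first becomes coprime to $L$. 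The divisibility $N\mid m/\gcd(m,p^d)$ guarantees that the $D_N$-copies and $mD_L$-copies occupy distinct $p$-adic levels modulo $p^{d+1}$, so the mask is a tensor product of DFT matrices. The \L aba--Wang/Dutkay--Haussermann--Lai theorem then yields spectrality. If the digits fail to be distinct modulo $p^{d+1}$, as in Example~\ref{examp1.2}, we instead use the Liu--Peng--Wu style construction of a spectrum $\Lambda=\bigcup$ of tree-mapped digit sets and verify completeness via Jorgensen--Pedersen's criterion with the no-cycle condition.

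\textbf{Main obstacle.} The hardest step is the necessity of $N\mid m/\gcd(m,p^d)$. Here $N,L$ are arbitrary rather than prime, so the zero sets can overlap partially across prime factors. I would first reduce to prime-power components of $N$ and $L$ via the Chinese remainder theorem. I would then run the An--Wang prime argument on each component, tracking the maximal level $d$.
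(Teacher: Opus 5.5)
\textbf{Necessity.} Citing An--Wang for $\rho^{-1}\in\mathbb{N}$ is fine, since $Z(m_D)\subset\frac{1}{NLm}\mathbb{Z}$. The conditions $N\mid p$ and $L\mid p$ follow cleanly from the Dai--He--Lau convolution criterion, e.g.\ $Z(\widehat{\delta}_{\rho^2 gD_{N/g}})\subset Z(\widehat{\delta}_{\rho D_N})$ with $g=\gcd(N,p)$; your residue-counting sketch there is not yet an argument, but it is repairable.

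The genuine gap is the necessity of $N\mid m/\gcd(m,p^d)$. The mechanism you propose---that some $D_N$-level zero set coincides with the zero set of the $mD_L$-block at level $d$---is false in general. Take $N=2$, $L=3$, $m=3$, $p=6$: then $d=1$ and $2\nmid 3/\gcd(3,6)$. The level zero sets are $2^{j-1}3^{j}(2\mathbb{Z}+1)$ for $D_2$ and $2^{k}3^{k-2}(\mathbb{Z}\setminus 3\mathbb{Z})$ for $3D_3$. Each of them contains points lying in no other level's zero set: one family has fixed $2$-adic and unbounded $3$-adic valuation, the other the reverse. Since $N$ and $L$ are prime, no finer splitting helps. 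So there is no uniform collision, and your ``$\xi$ with $\sum_\lambda|\widehat{\mu}(\xi+\lambda)|^2<1$'' is never identified. A CRT reduction to prime powers has no evident justification either: $d$ is a maximum over the primes of $L$, and spectrality does not factor prime by prime.

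What is missing is the paper's dichotomy. Write $N=\tilde dN_1$ with $\tilde d=\gcd(N,m/\gcd(m,p^d))$, and $L=L_1L'$ where the prime $L_1$ realizes $d$. Put $R_1=Z(\widehat{\delta}_{\rho\tilde dD_{N_1}})$ and $S_1=Z(\widehat{\delta}_{\rho^{d+1}mL'D_{L_1}})$, and let $S_2$ collect the zeros of the $mD_L$-blocks at levels $1,\dots,d$ and of $\rho^{d+1}mD_{L'}$. Two facts combine: the $L_1$-part of $p^d/\gcd(m,p^d)$ is at most $L_1^{\alpha_1-1}$, and $\gcd(N_1,\tilde m/\tilde d)=1$. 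Together they show that a bi-zero set $\Lambda\ni 0$ cannot meet both $R_1\setminus(S_1\cup S_2)$ and $S_1\setminus(R_1\cup S_2)$. Suppose it misses the first, and write $\mu=\delta_{\rho\tilde dD_{N_1}}*\mu_1$. Then every nonzero $\lambda\in\Lambda$ is a zero of $\widehat{\mu}_1$. Let $F$ be the difference of the Radon--Nikodym densities of $\mu_1$ and $\delta_{\tilde d/p}*\mu_1$ with respect to $\mu$, divided by $N_1$. This is a nonzero element of $L^2(\mu)$ with $\langle F,e_\lambda\rangle=\frac{1}{N_1}(1-e^{-2\pi i\tilde d\lambda/p})\widehat{\mu}_1(\lambda)=0$ for all $\lambda\in\Lambda$. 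The other case is symmetric, with $L_1$ in place of $N_1$.

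\textbf{Sufficiency.} Your main route---a Hadamard triple for $(p^{d+1},D+pD+\cdots+p^dD)$, possibly after conjugating by an integer coprime to $p$---breaks down exactly when $d\ge1$. A Hadamard triple with integer $L'$ forces the digits to be distinct modulo $p^{d+1}$, and conjugation by an integer coprime to $p$ cannot change that. In Example~\ref{examp1.2} ($p=4$, $D=\{0,1,8,9\}$, $d=1$) the digits $0$ and $4\cdot 8$ agree mod $16$. More generally, whenever $p=NL$ and $d\ge1$, $D$ is not even a complete residue system mod $p$. Everything then rests on the unspecified ``Liu--Peng--Wu style'' fallback.

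The idea you are missing is the level shift. Conjugate by $\tilde p=p^d/\gcd(m,p^d)$, a divisor of $p^d$ and so typically \emph{not} coprime to $p$. Then $\tilde pm=p^d\tilde m$ with $\tilde m=m/\gcd(m,p^d)$, and the first $d$ levels of the $L$-block become integer digits:
$\mu_{p^{-1},\tilde pD}=\delta_{\mathcal{D}_1}*\mu_{p^{-1},\mathcal{D}_2}$, where $\mathcal{D}_1=\tilde m(D_L\oplus pD_L\oplus\cdots\oplus p^{d-1}D_L)$ and $\mathcal{D}_2=\tilde mD_L\oplus\tilde pD_N$.
\begin{itemize}
\item The finite factor has a $\mathbb{Z}$-periodic Fourier transform and an explicit spectrum $\Lambda_1$.
\item The pair $(p,\mathcal{D}_2)$ is admissible; this is where $N\mid\tilde m$ (hence $\gcd(\tilde p,N)=1$) and the valuation bounds coming from the definition of $d$ enter.
\item Since $\gcd(\tilde m,\tilde p)=1$, we get $\mathcal{Z}(\mu_{p^{-1},\mathcal{D}_2})=\emptyset$, hence an integer spectrum $\Lambda_2$ by Dutkay--Haussermann--Lai.
\end{itemize}
The Jorgensen--Pedersen criterion then shows that $\Lambda_1\oplus\Lambda_2$ is a spectrum.
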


\noindent{\bf Strategy of the proof and remarks}. 
	\begin{enumerate}[\rm(1)]
		
		\item 
		The main difficulty in our proof arises from the independence of the parameters $N, L,$ and $m$, which leads to complicated relations among the zero sets of the associated Fourier transforms. To overcome this, we give an equivalent definition of $d$ (see \eqref{eq3.1.0} below) for the theoretical proofs. While the explicit formula in \eqref{2.11} is highly convenient for concrete applications (see, for instance, Examples 1.3 and 1.4), the equivalent formulation in \eqref{eq3.1.0} captures the essential arithmetic relations among the parameters much better.
		
		\item 
		To prove the necessity of the condition $N \mid \frac{m}{\gcd(m,p^d)}$, we proceed by contradiction. Assuming $N \nmid \frac{m}{\gcd(m,p^d)}$, we show that any orthogonal set $\Lambda$ for $L^2(\mu_{\rho,D})$ must be incomplete. We achieve this by constructing a nonzero function $f \in L^2(\mu_{\rho,D})$ that is orthogonal to $e^{2\pi i \lambda x}$ for all $\lambda \in \Lambda$. While the initial motivation for this construction stems from An, He and Lai \cite{An-He-Lai2023}, we adopt a distinct, purely measure-theoretic method. Specifically, we use the Radon-Nikodym theorem to guarantee the existence and $\mu_{\rho,D}$-almost everywhere uniqueness of the function $f$. 
		
		\item 
	   Applying Theorem \ref{1.2} to the digit set $D$ defined in \eqref{eq1.4}, we can deduce that $\mu_{\rho, D}$ is a spectral measure if and only if $\rho^{-1}=p^{l_1}q^{l_2}t \in \mathbb{N}^{*}$ with $l_1,l_2 \geq 1$ and $\gcd(t,pq)=1$, and $l\geq l_1\lfloor\frac{l}{l_2}\rfloor$. This characterization seems more intuitive than the conditions given in Theorem \ref{A}. 
			In fact, Theorem \ref{1.2} completely resolves the spectrality problem for self-similar measures generated by strict product-form digit sets consisting of two direct sum components.

		\item 
		Finally, we remark that the conditions $N \mid p$, $L \mid p$, and $N \mid \frac{m}{\gcd(m,p^d)}$ imply $NL \mid p$, which essentially forces $p \geq \# D$ (as detailed in Remark~\ref{rem3.1}). In most nontrivial cases, the condition $p > \# D$ indicates that the associated self-similar measure is singular. The case where $p < \# D$ is significantly more complicated due to geometric overlaps among the sets $T+d$ for $d \in D$. The critical boundary case $p = \# D$ is intimately connected to self-similar tiles and spectral sets, and will be explored in depth in Section \ref{sec4}.
		
\end{enumerate}

To gain some insight into our results, we present two examples below. 
\begin{example}
	Let $\mu_{\rho,D}$ be the self-similar measure defined by \eqref{1.1}, where
	$\rho^{-1}=72$ and $D=D_{12}\oplus mD_4$ with some $m\in \mathbb{N}^*$. Then $\mu_{\rho,D}$ is not a spectral measure for any $m\in\mathbb{N}^*$.
\end{example}

\begin{proof}
	Suppose, to the contrary, that $\mu_{\rho,D}$ is a spectral measure for some
	$m=2^\tau m'$ with $\tau\in\mathbb{N}^*$ and $m'\in 2\mathbb{Z}+1$. Let $d$ be defined by \eqref{2.11}. Then we have
$$\gcd\left(\frac{2^{\tau+2}m'}{\gcd(2^{\tau+2}m',2^{3(d+1)}3^{2(d+1)})},4\right)=1,$$
which yields that $\tau\leq 3d+1$.
	According to Theorem~\ref{1.2}, it is necessary that
\[4 \mid \frac{2^{\tau}m'}{\gcd(2^{\tau}m',2^{3d}3^{2d})}.
	\]
	This condition implies $\tau-3d\ge 2$, which contradicts the fact that
	$\tau\leq 3d+1$. Therefore, there exists no
	$m\in\mathbb{N}^*$ such that $\mu_{\rho,D}$ is a spectral measure.
\end{proof}

\begin{example}
	Let $\mu_{\rho,D}$ be the self-similar measure defined by \eqref{1.1}, where
	$\rho^{-1}=144$ and $D=D_{12}\oplus mD_4$ with some $m\in \mathbb{N}^*$. Then $\mu_{\rho,D}$ is a
	spectral measure if and only if
	$
	m=2^{\tau_1}3^{\tau_2}m'
	$  satisfies
\[
	\tau_1=4k+2 \quad\text{and}\quad \tau_2\ge 2k+1
	\]
 for some nonnegative integer $k$ and $m' \in \mathbb{N}^*$ with $\gcd(m',6)=1$.
\end{example}

\begin{proof}
Note that $p:=\rho^{-1}=2^4\cdot 3^2$
 and $N:=12=2^2\cdot 3$. By using Theorem \ref{1.2}, it follows that $\mu_{p^{-1},D}$ is a spectral measure if and only if $12\mid \frac{m}{\gcd(m,p^d)}$, where $d$ is defined by \eqref{2.11}.
 Then the condition $12\mid \frac{m}{\gcd(m,p^d)}$ gives that
\begin{equation}\label{2.13}
\tau_1-4d\geq2\quad {\rm{and}}\quad \tau_2-2 d\geq 1.
\end{equation}
On the other hand, the definition of $d$ in \eqref{2.11} forces that $\tau_1+2\leq 4(d+1)$. Combining this and \eqref{2.13} gives that $\tau_1=4d+2$ and $\tau_2\geq 2d+1$.
\end{proof}

\subsection{Fuglede conjecture on self-similar sets with product-form digit set}

In this section, we investigate the spectral and tiling properties of self-similar sets generated by product-form digit sets, which are closely connected to the one-dimensional Fuglede conjecture. 

Recall that if $\rho^{-1}=\#D$ and the attractor $T:=T(\rho,D)$ defined by \eqref{eqa} has a nonempty interior, then $T$ is known to tile $\mathbb{R}$ by translations \cite{LW_1996}; such a set is called a \emph{self-similar tile}. Within the context of Fuglede's conjecture, it is highly natural to ask whether such a self-similar tile is necessarily a spectral set. This question lies at the heart of the ``tile implies spectral'' direction of the conjecture in $\mathbb{R}$. Consequently, the spectrality of self-similar (and, more generally, self-affine) tiles has been the subject of extensive study in the literature.

In dimension one, Kenyon \cite{Ken1992} studied tile digit sets using number-theoretic methods. Later, Coven and Meyerowitz \cite{CM} famously introduced the (T1) and (T2) conditions, which deeply influenced the spectral characterization of finite sets and self-similar tiles. Lagarias and Wang \cite{LW_1996,LW_1996_2} formalized the notions of product-form and modulo product-form digit sets, while the spectrality of their corresponding self-similar tiles was subsequently investigated by Fu et al.~\cite{Fu-He-Lau2015} and Lai et al.~\cite{Lai2017}.
In higher dimensions, An and Lau \cite{An2019} examined self-affine tiles generated by the expanding matrix $pI_{2}$, where $p$ is a prime. More recently, Li and Rao \cite{LR_2025} extended several characterization results to self-affine tiles in $\mathbb{R}^{n}$ and obtained specific spectral properties for self-similar tiles in $\mathbb{R}$. Furthermore, Chen et al.~\cite{Chen-Liu-Zheng2024} proved that a generalized Sierpi\'{n}ski-type set is a spectral set if and only if it is a self-affine tile.

As a direct consequence of Theorem \ref{1.2}, we establish the following equivalence between the tiling and spectral properties for self-similar sets generated by product-form digit sets, thereby verifying Fuglede's conjecture for this specific class of fractals.

\begin{theorem}\label{1.3}
	Let $T(\rho,D)$ be the self-similar set defined by \eqref{1.1}, where $\rho^{-1}=LN$ and $D$ is given by \eqref{eq1.3}.
	Then $T(\rho,D)$ is a spectral set if and only if $T(\rho,D)$ is a translation tile of $\mathbb{R}$.
\end{theorem}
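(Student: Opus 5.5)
Since $\rho^{-1}=NL=\#D$, the self-similar measure $\mu_{\rho,D}$ is the normalized Lebesgue measure on $T:=T(\rho,D)$ exactly when $T$ has positive Lebesgue measure. Indeed, with no measure overlap, $\mathcal{L}(T)>0$ forces $\mathcal{L}|_T$ to satisfy \eqref{1.1}, and by uniqueness $\mu_{\rho,D}=\mathcal{L}|_T/\mathcal{L}(T)$. By Lagarias--Wang \cite{LW_1996}, for $\rho^{-1}=\#D$ the following are equivalent: $\mathcal{L}(T)>0$, $T$ has nonempty interior, and $T$ is a translation tile. The plan is to reduce both properties in Theorem \ref{1.3} to the arithmetic conditions of Theorem \ref{1.2} with $p=NL$.

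\emph{Spectral $\Rightarrow$ tile.} If $T$ is spectral, then in particular $0<\mathcal{L}(T)<\infty$. This is part of the definition of a spectral set, so the tile property follows immediately from \cite{LW_1996}. For consistency one may also note that $\mu_{\rho,D}$ is then spectral, so Theorem \ref{1.2} holds with $p=NL$.

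\emph{Tile $\Rightarrow$ spectral.} Suppose $T$ tiles. Then $\mu_{\rho,D}=\mathcal{L}|_T/\mathcal{L}(T)$, so it suffices to show that $\mu_{\rho,D}$ is spectral by verifying the conditions of Theorem \ref{1.2} for $p=NL$. The conditions $N\mid p$ and $L\mid p$ are automatic. The real task is to show that tiling forces $N\mid \frac{m}{\gcd(m,(NL)^d)}$. For this I would use the characterization of tile digit sets for $b=\#D$: $T(b^{-1},D)$ is a tile iff $D$ is a complete residue system modulo $b$ after the standard reductions, or more generally iff the mask polynomial $D(x)=\frac{x^N-1}{x-1}\cdot\frac{x^{mL}-1}{x^m-1}$ satisfies the Kenyon/Lagarias--Wang condition: for each $k\mid NL$, $k>1$, the cyclotomic factor $\Phi_k$ divides $D(x^{(NL)^j})$ for some $j\ge 0$. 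The cyclotomic factors of $D$ are $\Phi_s$ with $s\mid N$, $s>1$, together with $\Phi_s$ with $s\mid mL$, $s\nmid m$.

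Write $m=(NL)^{e}m_1$ with $e$ maximal such that the divisibility needed for $\Phi_k$ to come from the second factor still holds. Matching the divisors $k\mid NL$ that are not divisors of $N$ forces these divisors to be realized as $s(NL)^{-j}$-type shifts of divisors of $mL$ not dividing $m$. This pins down $d$ from \eqref{2.11} as exactly the power of $NL$ stripped from $m$. Meanwhile, the divisors $k\mid N$ must not be obstructed, which yields $N\mid m/\gcd(m,(NL)^d)$. The cleanest route I would try first is an induction on $d$. If $NL\mid m$ and $\gcd(m/NL\cdot L, L)\neq 1$, then replacing $m$ by $m/(NL)$ changes neither the tiling property (since $D_N\oplus mD_L$ and $D_N\oplus \frac{m}{NL}D_L$ give tiles simultaneously, the latter being a rescaled sub-IFS rearrangement via $T=\rho T_1$-type identities) nor the condition of Theorem \ref{1.2}. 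In the base case $d=0$, tiling amounts to $D$ being a complete residue system mod $NL$ up to the known product-form characterization, and this holds iff $N\mid m$ with $\gcd(m/N,L)=1$. That matches $N\mid m$.

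The main obstacle is this number-theoretic step: showing that the tiling condition for $D_N\oplus mD_L$ with base $NL$ is exactly equivalent to $N\mid \frac{m}{\gcd(m,(NL)^d)}$, with $d$ as in \eqref{2.11}. I expect that I will have to handle the interaction of prime factors shared by $N$ and $L$ carefully via the cyclotomic divisibility criterion.
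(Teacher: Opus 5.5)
Your reduction of both directions to Theorem~\ref{1.2} is fine, and so is your direction spectral $\Rightarrow$ tile. It is in fact more direct than the paper's Proposition~\ref{cor6.1}: positive Lebesgue measure together with Lagarias--Wang already gives tiling. The gap is in the direction tile $\Rightarrow N\mid \frac{m}{\gcd(m,(NL)^d)}$. You leave this as a plan, and the plan rests on a false premise: $d$ is \emph{not} the exponent of $NL$ in $m$.

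The step $m\mapsto m/(NL)$ is itself valid when $NL\mid m$. The integer points of $Z(\hat\mu_{\rho,D})$ do not change, because the only new piece, $\frac{\mathbb{Z}\setminus L\mathbb{Z}}{m/N}$, contains no integers (as $L\mid m/N$). Also $d$ drops by exactly one while $\frac{m}{\gcd(m,(NL)^d)}$ is unchanged. However, $d\ge 1$ does not force $NL\mid m$, so the induction need not reach $d=0$.

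A concrete failure: take $N=2$, $L=6$, $m=8$, $\rho^{-1}=12$.
\begin{enumerate}[(a)]
\item Here $d=1$, $12\nmid 8$, and $N\mid 8/\gcd(8,12)=2$.
\item So $\mu_{\rho,D}$ is spectral, and $T$ tiles; one checks directly that every nonzero integer lies in $Z(\hat\mu_{\rho,D})$.
\item Yet $D\equiv\{0,1,4,5,8,9\}$ mod $12$, each residue occurring twice, and $\gcd(m/N,L)=2$.
\end{enumerate}
Your base-case criterion would therefore call this a non-tile.

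Relatedly, the cyclotomic test as you state it restricts to $k\mid NL$. For $j\ge 1$ the condition $\Phi_k\mid D(x^{(NL)^j})$ is impossible, since $D(1)\neq 0$. So the test collapses to ``$D$ is a complete residue system mod $NL$''. Example~\ref{examp1.2} shows this is not necessary for tiling. The correct condition ranges over all orders $(NL)^k/\gcd(\nu,(NL)^k)$ with $\nu\in\mathbb{Z}\setminus\{0\}$, as in Theorem~\ref{2.8}(iii).

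The missing idea is that no induction is needed: a single integer witnesses the condition.
\begin{enumerate}[(1)]
\item Put $\sigma=\gcd(m,(NL)^d)$, $\tilde p=(NL)^d/\sigma$ and $\tilde m=m/\sigma$, so that $m\tilde p=(NL)^d\tilde m$.
\item Choose a prime $L_i\mid L$ with $d_i=d$. By \eqref{3.7}, the exponent of $L_i$ in $\tilde p$ is $\max\{0,dl_i-\tau_i\}\le\alpha_i-1$. Hence $L\nmid\tilde p$.
\item Every integer in $\bigcup_{n\ge 1}(NL)^n\frac{\mathbb{Z}\setminus N\mathbb{Z}}{N}$ is a multiple of $L$. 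So Theorem~\ref{2.8}(iii) forces $\tilde p=(NL)^{n_0}\frac{a}{mL}$ with $L\nmid a$, i.e.\ $(NL)^{n_0}a=(NL)^dL\tilde m$.
\item If $n_0\le d$, then $L\mid a$, a contradiction. So $n_0\ge d+1$, and cancelling gives $\tilde m=N(NL)^{n_0-d-1}a$, i.e.\ $N\mid\tilde m$.
\end{enumerate}
Theorem~\ref{1.2} then makes $\mu_{\rho,D}$, which is normalized Lebesgue measure on $T$, spectral. This is exactly the paper's proof of Proposition~\ref{prop6.2}.
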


The remainder of the paper is structured as follows.
In Section~\ref{sec2}, we provide some basic definitions and preliminaries concerning self-similar measures.
In Section~\ref{sec3},  we prove Theorem~\ref{1.2}, which establishes  a necessary and sufficient condition for the spectrality of $\mu_{\rho,D}$.
Finally, in Section~\ref{sec4}, we study the tiling and spectral properties of the self-similar set $T(\rho,D)$ and present the proof of Theorem~\ref{1.3}.

\section{Preliminaries}\label{sec2}

In this section, we recall some basic definitions and useful conclusions related to self-similar measures. Assume that $\mu$ is a probability measure with compact support on $\mathbb{R}$. The Fourier transform of $\mu$ is defined as
$$\hat{\mu}(\xi)=\int_{\mathbb{R}} e^{2 \pi i \langle \xi, x \rangle}d\mu(x), ~~~\xi \in \mathbb{R}. $$

It is widely known that the self-similar measure $\mu_{\rho,D}$ can be represented as an infinite convolution of discrete measures
\begin{align}\label{2.1}
	\mu_{\rho,D}=\delta_{\rho D}\ast\delta_{\rho^2 D}\ast\delta_{\rho^3 D}\ast\cdots
\end{align}
where $\delta_{D}=\frac{1}{\#D}\sum_{d\in D}\delta_{d}$, $\delta_{d}$ is the Dirac measure at the point $d\in D$. By direct calculations, the Fourier transform of the self-similar measure $\mu_{\rho,D}$ in ($\ref{2.1}$) is as follows:

\begin{equation}
	\hat{\mu}_{\rho,D}(\xi)=\prod^{\infty}_{n=1}\hat{\delta}_{\rho^n D}(\xi)=\prod^{\infty}_{n=1}m_{D}(\rho^{n}\xi),  \notag
\end{equation}
where $m_D(\cdot)$ is the mask polynomial of the digit set $D$, i.e.,
\begin{align*}
	m_{D}(x)=\frac{1}{\#D}\sum_{d\in D}e^{2\pi i \langle d,x \rangle}, x\in \mathbb{R}.
\end{align*}
We denote $Z(f)=\{x:f(x)=0\}$ as the zero set of function $f$, then it is easy to know

\begin{align}\label{2.2}	Z(\hat{\mu}_{\rho,D})=\bigcup^{\infty}_{n=1}\rho^{-n}Z(m_{D}).
\end{align}

Notice that the properties of spectra are invariant under translation. Without loss of generality, we always assume that $0\in\Lambda$.
For any $\lambda_{1} \neq \lambda_{2}\in\Lambda\subset\mathbb{R}$, the orthogonality condition shows that
$$\langle e^{2\pi i \langle \lambda_{1}, x \rangle},e^{2\pi i \langle \lambda_{2}, x \rangle }\rangle_{L^{2}(\mu_{\rho,D})}=\int _{\mathbb{R}} e^{2\pi i(\lambda_{1}-\lambda_{2})x}d\mu_{\rho,D}=\hat{\mu}_{\rho,D}(\lambda_{1}-\lambda_{2})=0.  $$
Consequently, for a countable set $0\in\Lambda\subset\mathbb{R}$, the family of exponential functions $\mathcal{E}_{\Lambda}$ forms an orthogonal set of $L^{2}(\mu_{\rho,D})$ if and only if
\begin{align}\label{2.3}
	\Lambda \subset (\Lambda-\Lambda)\backslash\{0\}\subset Z(\hat{\mu}_{\rho,D}).
\end{align}

Jorgensen and Pedersen \cite{Jorgensen-Pedersen_1998} provided the following criterion for a countable set $\Lambda$ to be an orthogonal set or a spectrum for the measure $\mu$.

\begin{theorem}[\cite{Jorgensen-Pedersen_1998}] \label{2.15}
	Let $\mu$ be a Borel probability measure with compact support on $\mathbb{R}^n$, and let $Q_{\mu,\Lambda}(\cdot)=\sum_{\lambda \in \Lambda} \vert \hat{\mu}(\cdot + \lambda) \vert ^2$ for a countable set $\Lambda \subset \mathbb{R}^n$. Then
	\begin{enumerate}[\rm(i)]
		\item $\Lambda$ is an orthogonal set of $\mu$ if and only if $Q_{\mu,\Lambda}(x) \le 1$ for all $x\in \mathbb{R}^n$;
		\item $\Lambda$ is a spectrum of $\mu$ if and only if $Q_{\mu,\Lambda}(x) \equiv 1$ for all $x\in \mathbb{R}^n$.
	\end{enumerate}
\end{theorem}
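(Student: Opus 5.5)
We prove (i) and (ii) by Hilbert-space arguments in $L^2(\mu)$, writing $e_\xi(x)=e^{2\pi i\langle \xi,x\rangle}$. Since $\mu$ is a probability measure, each $e_\xi$ is a unit vector. Moreover
$$\langle e_{\xi},e_{\lambda}\rangle_{L^2(\mu)}=\int e^{2\pi i\langle \xi-\lambda,x\rangle}\,d\mu(x)=\hat\mu(\xi-\lambda).$$
Since $\mu$ is a real measure, $|\hat\mu(-t)|=|\hat\mu(t)|$. Hence for every $x$,
$$Q_{\mu,\Lambda}(x)=\sum_{\lambda\in\Lambda}|\langle e_{-x},e_{\lambda}\rangle|^2 .$$
So $Q_{\mu,\Lambda}(x)$ is exactly the Bessel sum of the unit vector $e_{-x}$ against the family $\mathcal{E}_\Lambda$. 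The rest is the standard characterization of orthonormal systems and orthonormal bases via Bessel and Parseval.

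For (i), suppose $\mathcal{E}_\Lambda$ is orthonormal. Bessel's inequality then gives $Q_{\mu,\Lambda}(x)\le\|e_{-x}\|^2=1$ for all $x$. Conversely, suppose $Q_{\mu,\Lambda}\le 1$ everywhere, and fix $\lambda_0\in\Lambda$. Evaluate at $x=-\lambda_0$. The term $\lambda=\lambda_0$ contributes $|\hat\mu(0)|^2=1$, so every other term must vanish. Thus $\hat\mu(\lambda-\lambda_0)=0$ for all $\lambda\ne\lambda_0$, which is orthogonality.

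For (ii), suppose $\Lambda$ is a spectrum. Parseval's identity applied to $e_{-x}$ gives $Q_{\mu,\Lambda}(x)=1$. Conversely, suppose $Q_{\mu,\Lambda}\equiv 1$. By (i), $\mathcal{E}_\Lambda$ is orthonormal; let $H$ be its closed span and $P$ the orthogonal projection onto $H$. Then
$$\|Pe_{-x}\|^2=Q_{\mu,\Lambda}(x)=1=\|e_{-x}\|^2,$$
so $e_{-x}-Pe_{-x}$ has norm zero and $e_{-x}\in H$ for every $x\in\mathbb{R}^n$. It therefore suffices to show that the exponentials $\{e_\xi:\xi\in\mathbb{R}^n\}$ have dense span in $L^2(\mu)$; then $H=L^2(\mu)$.

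The main point needing care is this density step. Let $K$ be the compact support of $\mu$. The linear span of $\{e_\xi\}$ is a subalgebra of $C(K)$: it is closed under products and complex conjugation, contains the constants, and separates points, since distinct $x,y$ are separated by some $e_\xi$ with $\langle\xi,x-y\rangle\notin\mathbb{Z}$. By the Stone--Weierstrass theorem it is uniformly dense in $C(K)$. Because $\mu$ is a finite Borel measure on a compact metric space, $C(K)$ is dense in $L^2(\mu)$, and uniform convergence implies $L^2(\mu)$ convergence. This completes the proof.
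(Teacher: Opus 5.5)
Your proof is correct and complete. The paper does not prove this statement itself; it only quotes it from Jorgensen--Pedersen. Your argument is essentially the standard proof from that source: you identify $Q_{\mu,\Lambda}(x)$ with the Bessel sum of the unit vector $e_{-x}$, apply Bessel/Parseval, and close the converse of (ii) with the Stone--Weierstrass density of exponentials in $L^2(\mu)$.
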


Based on Theorem \ref{2.15}, Dai et al.~\cite{Dai-He-Lau_2014} gave the following criterion for the non-spectrality of a measure.

\begin{theorem}[\cite{Dai-He-Lau_2014}] \label{2.20}
	Let $\mu=\mu_1 \ast \mu_2$ be the convolution of two probability measures $\mu_1$ and $\mu_2$, neither of which is a Dirac measure. Suppose that $\Lambda$ is an orthogonal set of $\mu_1$, then $\Lambda$ is also an orthogonal set of $\mu$, but it cannot be a spectrum of $\mu$.
\end{theorem}

The following consequence follows directly from Theorem \ref{2.20} and provides a convenient criterion for detecting non-spectral self-similar measures.
\begin{remark}\label{2.30}
	{\rm
Let $\mu_{\rho,D}$ be a self-similar measure defined by \eqref{1.1}. Assume that the digit set $D$ admits a decomposition 
$$D=D_{(1)} \oplus D_{(2)} \oplus \cdots \oplus D_{(n)}.$$
 If there exist distinct indices $1 \le j_1 \neq j_2 \le n$ and integers $m_1,m_2 \ge 1$ such that
\begin{equation}
	Z(\hat{\delta}_{\rho^{m_1}D_{(j_1)}}) \subset Z(\hat{\delta}_{\rho^{m_2}D_{(j_2)}}), \notag
\end{equation}
then Theorem \ref{2.20} implies that $\mu_{\rho,D}$ is not a spectral measure.}
\end{remark}

Under a natural structural assumption, An and Wang \cite{An2021} provided a necessary condition for the spectrality of the self-similar measures generated by \eqref{1.1}.
\begin{theorem}[\cite{An2021}] \label{2.4}
	Let $0< \rho <1$ be a real number and $D \subset \mathbb{R}$ be a finite set. Assume that $Z(m_D) \subset \alpha \mathbb{Z}$ for $\alpha \in \mathbb{R} \setminus \{0\}$. If $\mu_{\rho,D}$ is a spectral measure, then $\rho^{-1} \in \mathbb{N}$.
\end{theorem}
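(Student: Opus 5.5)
The plan is to argue by contradiction, combining the zero-set description \eqref{2.2} with the Jorgensen--Pedersen criterion (Theorem~\ref{2.15}) and the self-similar identity $\hat\mu_{\rho,D}(\xi)=m_D(\rho\xi)\,\hat\mu_{\rho,D}(\rho\xi)$. Let $\Lambda$ be a spectrum with $0\in\Lambda$. By \eqref{2.2}, \eqref{2.3} and the hypothesis $Z(m_D)\subset\alpha\mathbb{Z}$, we get
$$\Lambda-\Lambda\subset \bigcup_{n\ge1}\rho^{-n}\alpha(\mathbb{Z}\setminus\{0\}).$$
Hence every $\lambda\in\Lambda\setminus\{0\}$ has a minimal \emph{level} $n(\lambda)\ge1$ with $\lambda\in\rho^{-n(\lambda)}\alpha\mathbb{Z}$. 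The first step is to exploit the triangle relations. For $\lambda,\lambda'\in\Lambda$ we have
$$\rho^{-a}k-\rho^{-b}k'=\rho^{-c}k''\quad\text{with } k,k',k''\in\mathbb{Z}\setminus\{0\}.$$
I would use this to show that $\rho^{-1}$ is algebraic. After that I would show, using the infinitude of $\Lambda$ together with a pigeonhole argument on levels, that some such relation forces $\rho^{-1}$ to be rational, say $\rho^{-1}=s/t$ in lowest terms.

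The second step reduces the problem to excluding $t\ge2$ or $s/t\notin\mathbb{N}$. Partition $\Lambda=\bigcup_{n\ge0}\Lambda_n$ by level, with $\Lambda_0=\{0\}$. Write
$$Q_{\mu,\Lambda}(x)=\sum_{\lambda\in\Lambda}|m_D(\rho(x+\lambda))|^2\,|\hat\mu_{\rho,D}(\rho(x+\lambda))|^2\equiv1.$$
The idea is to compare $\rho\Lambda$ with an orthogonal set of $\mu_{\rho,D}$ itself. Elements of $\rho\Lambda$ at level $\ge2$ still have pairwise differences in $Z(\hat\mu_{\rho,D})$ exactly when the scaled-down differences remain in the lattices $\rho^{-n}\alpha\mathbb{Z}$. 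When $\rho^{-1}$ is an integer this holds automatically, because $\rho^{-n}\alpha\mathbb{Z}\subset\rho^{-n+1}\alpha\mathbb{Z}$. When $\rho^{-1}=s/t$ with $t\ge2$, this nesting fails, and differences $\lambda-\lambda'$ between distinct levels must satisfy strong $t$-adic divisibility constraints on the integers $k$.

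I would turn those constraints into a structural statement. Either $\Lambda$ essentially lives in a single coset family, or, after multiplying by a suitable power of $\rho$, the set $\Lambda$ is contained in an orthogonal set of a proper convolution factor $\delta_{\rho^{j}D}*\cdots$ of $\mu_{\rho,D}$. In the latter case Theorem~\ref{2.20} shows it cannot be a spectrum, a contradiction. The remaining case $\rho^{-1}\in\mathbb{Q}$ with $t=1$ is exactly the conclusion $\rho^{-1}\in\mathbb{N}$.

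The main obstacle is the first step together with its refinement in the second: converting the purely combinatorial information that all differences lie in $\bigcup_n\rho^{-n}\alpha\mathbb{Z}$ into arithmetic information on $\rho$. For irrational $\rho^{-1}$ I would first try a density/Beurling-type argument. The levels of elements of a spectrum cannot all be distinct or widely separated, since otherwise $Q_{\mu,\Lambda}(x)<1$ near a suitable $x$. So infinitely many triples share bounded levels, and the relation above, with bounded exponents and $k$'s ranging over infinitely many integers, forces $\rho^{-1}$ to be a root of an integer polynomial with rational root, and hence rational. If this fails, the fallback is to invoke the detailed $t$-adic analysis directly on pairs of levels.
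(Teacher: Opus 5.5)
The paper does not prove this statement; it is imported from An--Wang. So your outline has to stand on its own, and it has two genuine gaps.

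\textbf{Gap 1: your first step cannot work as described.} Orthogonality, infinitude, and even positive density of $\Lambda$ do not force $\rho^{-1}\in\mathbb{Q}$. Take $D=\{0,1\}$, so $Z(m_D)=\frac12+\mathbb{Z}\subset\frac12\mathbb{Z}$, and take $\rho=2^{-1/2}$. Every nonzero integer $k=2^v(2j+1)$ equals $\rho^{-2(v+1)}(j+\frac12)$, so $\mathbb{Z}$ is an infinite orthogonal set of $\mu_{\rho,D}$. With your notion of level (defined through $\alpha\mathbb{Z}$, $\alpha=\frac12$), every nonzero integer has level $2$. Each triangle relation then reads $\rho^{-2}k-\rho^{-2}k'=\rho^{-2}k''$ and carries no information about $\rho$. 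So ``bounded exponents with infinitely many $k$'s'' gives nothing, and ``a root of an integer polynomial with a rational root, hence rational'' is a non sequitur.

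``Algebraic'' is not the right intermediate goal either. For the golden ratio $\beta=\rho^{-1}$ one has $\beta^3-\beta^2=\beta$, so $\{0,\frac12\beta^2,\frac12\beta^3\}$ is orthogonal for $D=\{0,1\}$ with three distinct levels, although no power of $\beta$ is rational.

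An argument along your lines actually needs (a) that $\rho^{-r}\in\mathbb{Q}$ for some $r\ge1$, which requires its own proof, and (b) an exclusion of $r\ge2$ that uses completeness. For (b) the convolution criterion is the right tool, but it belongs here rather than where you put it:
\begin{enumerate}[\rm(i)]
\item With $r$ minimal, $1,\rho^{-1},\dots,\rho^{-(r-1)}$ are linearly independent over $\mathbb{Q}$, so the level of each nonzero $\lambda$ is well defined modulo $r$.
\item Since $0\in\Lambda$, each triangle relation forces the levels of $\lambda$, $\lambda'$ and $\lambda-\lambda'$ to be congruent modulo $r$. Hence a single residue class $j$ serves for all of $(\Lambda-\Lambda)\setminus\{0\}$.
\item Then $(\Lambda-\Lambda)\setminus\{0\}\subset Z(\hat\nu_j)$ with $\nu_j=\ast_{n\equiv j \bmod r}\,\delta_{\rho^nD}$. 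Theorem~\ref{2.20}, applied to $\mu_{\rho,D}=\nu_j\ast\bigl(\ast_{n\not\equiv j \bmod r}\,\delta_{\rho^nD}\bigr)$, shows that $\Lambda$ is not a spectrum.
\end{enumerate}

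\textbf{Gap 2: the case $\rho^{-1}=s/t$ with $\gcd(s,t)=1$ and $t\ge2$ is not proved at all.} This is where the real difficulty lies; already for $D=\{0,1\}$ it covers the nontrivial part of Dai's classification of spectral Bernoulli convolutions. You assert a dichotomy but never bring the ``single coset family'' alternative to a contradiction. The other alternative cannot be extracted from $t$-adic divisibility constraints, since those follow from orthogonality alone.

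Indeed, take $D=\{0,1\}$ and $\rho^{-1}=4/3$. The set $\{0\}\cup\{2^{2n-1}:n\ge1\}$ is an infinite orthogonal set, because $2^{2n-1}=(\frac43)^n\cdot\frac{3^n}{2}$ and, for $a>b$, $2^{2a-1}-2^{2b-1}=(\frac43)^b\cdot\frac{3^b(4^{a-b}-1)}{2}$, with odd numerators. Comparing $2$-adic valuations, its differences occupy every level $n\ge1$. So it is orthogonal for no sub-convolution $\ast_{n\in S}\,\delta_{\rho^nD}$ with $S\subsetneq\mathbb{N}^*$, and $D$ has no nontrivial direct-sum splitting.

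So Theorem~\ref{2.20} cannot be expected to finish this case. What is missing is a genuine completeness argument that exploits the $t$-adic structure of a putative spectrum. It should either show $Q_{\mu,\Lambda}(x)<1$ for some $x$, or produce a nonzero function in $L^2(\mu_{\rho,D})$ orthogonal to every element of $\mathcal{E}_\Lambda$. Your proposal names this step but supplies none of its content.
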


Strichartz \cite{Strichartz_2000} introduced the notion of  Hadamard triple (Definition \ref{2.5}), which plays an important role in the study of spectral measures. The following lemma provides a concrete criterion for verifying whether a given triple forms a Hadamard triple.
\begin{lemma}[\cite{Dut-Hau-Lai2019}]\label{lem2-6}
	Let $R\in M_n(\mathbb{Z})$ be an expanding matrix, and let $B,L\subset\mathbb{Z}^n$ be two finite digit sets with the same cardinality. Then $(R,B,L)$ is a Hadamard triple if and only if $m_B(R^{*-1}(l_1-l_2))=0$ for any $l_1\neq l_2\in L$.
\end{lemma}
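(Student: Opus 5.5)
The plan is to compute the entries of $H^{\ast}H$ directly and observe that the off-diagonal entries are exactly values of the mask polynomial $m_B$. Here $H$ is the matrix of Definition \ref{2.5} with digit set $B$, and I read the $M^{-1}$ there as $R^{-1}$:
$$H=\frac{1}{\sqrt{\#B}}\left(e^{2\pi i\langle R^{-1}b,l\rangle}\right)_{b\in B,\,l\in L}.$$
Since $\#B=\#L$, $H$ is a square matrix. Hence $H^{\ast}H=I$ is equivalent to $H$ being unitary, and it suffices to check orthonormality of the columns of $H$, which are indexed by $l\in L$.

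The first step is the transpose identity $\langle R^{-1}b,l\rangle=\langle b,R^{\ast-1}l\rangle$. With it, the $(l_1,l_2)$ entry of $H^{\ast}H$ becomes
$$(H^{\ast}H)_{l_1,l_2}=\frac{1}{\#B}\sum_{b\in B}\overline{e^{2\pi i\langle R^{-1}b,l_1\rangle}}\,e^{2\pi i\langle R^{-1}b,l_2\rangle}=\frac{1}{\#B}\sum_{b\in B}e^{2\pi i\langle b,R^{\ast-1}(l_2-l_1)\rangle}=m_B\big(R^{\ast-1}(l_2-l_1)\big).$$

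The second step is to read off both kinds of entries. On the diagonal, $l_1=l_2$ gives $m_B(0)=1$ automatically. So $H^{\ast}H=I$ holds if and only if $m_B(R^{\ast-1}(l_2-l_1))=0$ for all $l_1\neq l_2$ in $L$. Because the pair ranges over all ordered pairs of distinct elements, this is the same as requiring $m_B(R^{\ast-1}(l_1-l_2))=0$ for every $l_1\neq l_2\in L$, which is the stated criterion. The sign of the difference is immaterial: the condition is imposed for both orderings, and in any case $m_B(-x)=\overline{m_B(x)}$ because $B$ is real.

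No step here is a genuine obstacle. The only points needing care are the bookkeeping of the conjugation and transpose, namely $R^{-1}$ on $b$ versus $R^{\ast-1}$ on $l$, and the observation that squareness of $H$ makes one-sided orthonormality of the columns sufficient for unitarity. Integrality of $R$, $B$ and $L$ and the expanding property of $R$ play no role in this equivalence; they matter only for how Hadamard triples are used elsewhere, for instance in the result of Jorgensen and Pedersen \cite{Jorgensen-Pedersen_1998}.
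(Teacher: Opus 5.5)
Your proof is correct: the $(l_1,l_2)$ entry of $H^{\ast}H$ is $m_B(R^{\ast-1}(l_2-l_1))$, each diagonal entry is $m_B(0)=1$, and because $H$ is square, $H^{\ast}H=I$ is exactly the unitarity condition. The paper gives no proof of its own and simply cites Dutkay--Haussermann--Lai; your entrywise computation is the standard verification behind that result, and your reading of $M^{-1}$ in Definition~\ref{2.5} as $R^{-1}$ (and of $I_n$ as the identity of size $\#B$) is the intended one.
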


Let $\mu$ be a Borel probability measure on $\mathbb{R}$. Define the integral periodic zero set of the Fourier transform $\hat{\mu}$ by
$$\mathcal{Z}(\mu)=\{ \xi \in  \mathbb{R}: \hat{\mu}(\xi +k)=0~~ {\rm{for~ all}} ~k \in \mathbb{Z} \}.$$
In general, it is difficult to determine whether the set $\mathcal{Z}(\mu)$ is empty. Consider the infinite convolution
\begin{align}\label{2.9}
	\nu:=\delta_{R_1^{-1}B_1}*\delta_{(R_1R_2)^{-1}B_2}*\cdots*\delta_{(R_1R_2\cdots R_k)^{-1}B_k}*\cdots
\end{align}
generated by a sequence of admissible pairs ${(R_k,B_k)}_{k=1}^{\infty}$. Li et al.~\cite{Li2024} derived a sufficient condition for $\mathcal{Z}(\nu)=\emptyset$ when $(R_k,B_k)$ is selected from a finite collection of admissible pairs.
When $(R_k,B_k)=(R,B)$ for all $k\geq 1$, the convolution in \eqref{2.9} reduces to the self-similar measure
$\mu_{R^{-1},B}$ given by \eqref{2.1}.

\begin{proposition}[\cite{Li2024}] \label{2.10}
	Let the sequence of admissible pairs $\{(R_k,B_k)\}^{\infty}_{k=1}$ be chosen from a finite set of admissible pairs in $\mathbb{R}$, and let $\nu$ be defined by \eqref{2.9}. If
	\begin{equation*}	\gcd\left(\bigcup^{\infty}_{k=n}\left(B_k-B_k\right)\right)=1
	\end{equation*}
	for each $n \ge 1$, then $\mathcal{Z}(\nu)=\emptyset$.
\end{proposition}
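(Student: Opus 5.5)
The plan is to argue by contradiction: assume some $\xi\in\mathcal{Z}(\nu)$, build from it an infinite backward orbit of "zero points" for the tail measures, and show the gcd hypothesis forces that orbit to approach $\mathbb{Z}$, which the tails forbid.

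\textbf{Setup.} For each $n\ge1$ let $\nu_n:=\delta_{R_n^{-1}B_n}*\delta_{(R_nR_{n+1})^{-1}B_{n+1}}*\cdots$ be the tail measure, so $\nu_1=\nu$. It satisfies
\[
\hat\nu_n(x)=m_{B_n}(R_n^{-1}x)\,\hat\nu_{n+1}(R_n^{-1}x).
\]
Only finitely many admissible pairs occur. Hence the supports of all $\nu_n$ lie in a common compact set, and the family $\{\hat\nu_n\}$ is uniformly equicontinuous. Since $\hat\nu_n(0)=1$, there is $\delta>0$ such that $\hat\nu_n(x)\neq0$ whenever $\operatorname{dist}(x,\mathbb{Z})<\delta$, uniformly in $n$. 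Moreover each $\mathcal{Z}(\nu_n)$ is $\mathbb{Z}$-periodic. So if $\xi\in\mathcal{Z}(\nu_n)$, then $\operatorname{dist}(\xi,\mathbb{Z})\ge\delta$.

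\textbf{Step 1: a backward orbit.} Let $\xi_0\in\mathcal{Z}(\nu_1)$ and let $L_1$ be a spectrum making $(R_1,B_1,L_1)$ a Hadamard triple. Unitarity gives
\[
\sum_{l\in L_1}|m_{B_1}(R_1^{-1}(x+l))|^2=1 .
\]
Hence there is $l_1\in L_1$ with $m_{B_1}(R_1^{-1}(\xi_0+l_1))\ne0$. Replacing $\xi_0$ by $\xi_0+l_1+R_1k$ with $k\in\mathbb{Z}$ and using the factorization, we get $\hat\nu_2(R_1^{-1}(\xi_0+l_1)+k)=0$ for all $k$. Thus $\xi_1:=R_1^{-1}(\xi_0+l_1)\in\mathcal{Z}(\nu_2)$. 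Iterating produces $\xi_n=R_n^{-1}(\xi_{n-1}+l_n)\in\mathcal{Z}(\nu_{n+1})$. The maps are uniformly contracting with bounded translations, so $\{\xi_n\}$ stays in a fixed compact set, and $\operatorname{dist}(\xi_n,\mathbb{Z})\ge\delta$ for every $n$.

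\textbf{Step 2: exploit the freedom in $l_n$.} Every other $l\in L_{n+1}$ with $l\neq l_{n+1}$ can be chosen in Step 1 as well. Whenever $m_{B_{n+1}}(R_{n+1}^{-1}(\xi_n+l))\ne0$, the point $R_{n+1}^{-1}(\xi_n+l)$ lies in $\mathcal{Z}(\nu_{n+2})$. I would therefore set up a compactness argument. Pass to a subsequence along which the finite-alphabet data $(R_{n+j},B_{n+j})_{j\ge0}$ converge in the product topology, and along which $\xi_n\to\xi^*$. In the limit, equicontinuity gives a limit tail measure $\nu^*$ with $\xi^*\in\mathcal{Z}(\nu^*)$, and this $\nu^*$ inherits the hypothesis on $\gcd$.

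The goal is to show that along the orbit the mass in the identity above must concentrate on a single $l$. Concretely, I want $|m_B(R^{-1}(\xi^*+l))|=1$ for the relevant pairs. The reason is that otherwise a second branch yields zero points, and iterating the branching should contradict the measure-theoretic bound $Q_{\nu,\Lambda}\le1$ of Theorem~\ref{2.15}, which forbids too many independent zero branches.

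\textbf{Step 3: conclude.} The equality $|m_B(y)|=1$ means $e^{2\pi i b y}$ is constant in $b\in B$, that is, $(B-B)y\subset\mathbb{Z}$. Applied along all levels of the tail, this gives
\[
\Bigl(\bigcup_{k\ge n}(B_k-B_k)\Bigr)\cdot(\text{suitably rescaled }\xi^*)\subset\mathbb{Z}.
\]
By the hypothesis $\gcd\bigl(\bigcup_{k\ge n}(B_k-B_k)\bigr)=1$ and B\'ezout, this forces the rescaled point into $\mathbb{Z}$. Transporting back shows $\operatorname{dist}(\xi_n,\mathbb{Z})\to0$ along the subsequence, contradicting the uniform bound $\delta$.

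\textbf{Main obstacle.} I expect Step 2 to be the hardest: showing that the orbit really lands on the set where $|m_{B}|=1$. The difficulty is that the nonvanishing branch is not unique a priori, and the sequence of pairs is arbitrary, so there is no single uniform level $J$ at which the gcd is already $1$. My first attempt would be to mimic the classical proof for a single Hadamard triple, where the orbit of a rational $\xi$ is finite and one studies a cycle. Here I would replace cycles by the compactness over the finite alphabet of pairs, working with the closed set of limit points of $(\xi_n)$ and the shift-invariant set of limit tail sequences.
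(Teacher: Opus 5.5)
The paper quotes this result from \cite{Li2024} without proof, so there is no internal argument to compare with. Judged on its own, your proposal has a genuine gap exactly where you expect one.

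Step~1 is correct; it works for every integer shift, not only $l\in L_n$. The conclusion $\operatorname{dist}(\mathcal{Z}(\nu_n),\mathbb{Z})\ge\delta$ is also correct. The intermediate claim should read ``$\hat\nu_n(x)\neq0$ for $|x|<\delta$'', since $\hat\nu_n$ can vanish at nonzero integers.

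Step~2 is the entire theorem, and neither mechanism you propose can deliver it.
\begin{itemize}
\item The bound $Q_{\nu,\Lambda}\le1$ is only an upper bound. Every positive-weight branch point $y$ lies in $\mathcal{Z}(\nu_{n+1})$, so $\hat\nu_{n+1}(y+k)=0$ for all $k\in\mathbb{Z}$. Such points contribute nothing to $Q$, so no amount of branching can violate the bound.
\item Passing to limit tails destroys the hypothesis. Take $a=(2,\{0,3\})$ and $b=(2,\{0,1\})$, both admissible with $L=\{0,1\}$, and $\omega=b\,a\,b\,a^2\,b\,a^3\,b\cdots$. The gcd condition holds for every $n$. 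Yet every limit of shifts is $a^\infty$ or $a^jba^\infty$, and both fail the condition for large $n$. For $a^\infty$, the limit $\nu^*$ is normalized Lebesgue measure on $[0,3]$. Then $\mathcal{Z}(\nu^*)=\frac13\mathbb{Z}\setminus\mathbb{Z}$, and $\{1/3,2/3\}$ is a cycle on which $|m_{\{0,3\}}|=1$.
\end{itemize}
So the concentration you aim for can hold in the limit with no contradiction at all. What rules out $\mathcal{Z}$ is the sparse finite levels where $b$ occurs, and the limit forgets them.

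The missing idea is finiteness plus counting, carried out along $\omega$ itself.

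(i) The set $S_n:=\mathcal{Z}(\nu_n)/\mathbb{Z}$ lies in one fixed finite set. If $\theta\in(-\frac12,\frac12]$ represents a point of $S_n$, then $\hat\nu_n(\theta)=0$, so $\theta/(R_n\cdots R_k)\in Z(m_{B_k})$ for some $k$. Put $\epsilon_0:=\min_j\operatorname{dist}(0,Z(m_{B_j}))>0$. This bounds $k-n$ uniformly, and each $Z(m_{B_k})$ is a finite union of $\mathbb{Z}$-cosets.

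(ii) For $x\in S_n$, the points $\tau_lx=R_n^{-1}(x+l)$ with $l\in L_n$ are distinct mod $1$, and those of positive weight lie in $S_{n+1}$. Children of distinct points of $S_n$ are distinct, since $x\equiv R_n\tau_lx$. Hence $|S_{n+1}|\ge|S_n|\ge1$, and by (i) equality holds for all $n\ge n_0$. Equality forces each $x\in S_n$ to have exactly one positive-weight child. By the Hadamard identity $\sum_{l\in L_n}|m_{B_n}(\tau_lx)|^2=1$, that child has weight $1$.

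(iii) Following these children gives $x_k\in S_k$ for $k\ge n_0$, with $(B_k-B_k)x_{k+1}\subset\mathbb{Z}$. For $k\ge k_0>n_0$ we also have $x_{k_0}\equiv R_{k_0}\cdots R_kx_{k+1}$. Transport all constraints to the \emph{earliest} point; multiplying by integers only helps. This gives $\bigl(\bigcup_{k\ge k_0-1}(B_k-B_k)\bigr)x_{k_0}\subset\mathbb{Z}$. B\'ezout then yields $x_{k_0}\in\mathbb{Z}$, contradicting $0\notin S_{k_0}$.

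This last step is your Step~3, but it uses the hypothesis at arbitrarily late $n$ on the original sequence. That is why ``for each $n$'' is required, and why no limit object will do.
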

Hadamard triples also yield a large class of self-affine spectral measures. In particular,  it has been shown  that if a self-affine measure $\mu$ is generated by an admissible pair $(R, B)$ and  $\mathcal{Z}(\mu)=\emptyset$, then it admits an integer spectrum.
\begin{theorem}[\cite{Dut-Hau-Lai2019}] \label{2.12}
	Let $\mu:=\mu_{R^{-1},B}$ be a self-affine measure generated by \eqref{1.1}, where $(R,B)$ is an admissible pair.
	Then $\mathcal{Z}(\mu)= \emptyset$ if and only if
	$\mu$ has a spectrum in $\mathbb{Z}^n$.
\end{theorem}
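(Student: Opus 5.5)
The converse direction is immediate from Theorem \ref{2.15}; the forward direction is where the work lies. For the converse, suppose $\Lambda\subset\mathbb{Z}^n$ is a spectrum of $\mu=\mu_{R^{-1},B}$. By Theorem \ref{2.15}(ii), $Q_{\mu,\Lambda}(\xi)=\sum_{\lambda\in\Lambda}|\hat\mu(\xi+\lambda)|^2\equiv 1$. Hence for every $\xi$ there is some $\lambda\in\Lambda\subset\mathbb{Z}^n$ with $\hat\mu(\xi+\lambda)\neq0$. This means $\xi\notin\mathcal{Z}(\mu)$, so $\mathcal{Z}(\mu)=\emptyset$.

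For the forward direction, assume $\mathcal{Z}(\mu)=\emptyset$. Fix $L\subset\mathbb{Z}^n$ with $0\in L$ making $(R,B,L)$ a Hadamard triple. We are free to replace each $l\in L$ by any element of $l+R^*\mathbb{Z}^n$, since the Hadamard property only depends on $L$ modulo $R^*\mathbb{Z}^n$ (Lemma \ref{lem2-6}).

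\textbf{Step 1: reduce to the function $Q$.} I will look for a spectrum satisfying $\Lambda=L+R^*\Lambda$, of the form built from $\Lambda(R,L)$ together with finitely many ``cycle'' translates. Orthogonality comes from Jorgensen--Pedersen: differences of distinct elements land in $R^{*k}(l-l')+R^{*(k+1)}\mathbb{Z}^n$, and $m_B$ vanishes there. Set $Q=Q_{\mu,\Lambda}$. Then $Q\le 1$ by Theorem \ref{2.15}(i), $Q$ extends to an entire function, and the self-similarity $\hat\mu(\xi)=m_B(R^{*-1}\xi)\hat\mu(R^{*-1}\xi)$ gives the refinement identity
\[
1-Q(\xi)=\sum_{l\in L}\bigl|m_B(\tau_l\xi)\bigr|^2\bigl(1-Q(\tau_l\xi)\bigr),\qquad \tau_l\xi=R^{*-1}(\xi+l).
\]
The weights $|m_B(\tau_l\xi)|^2$ sum to $1$ by the Hadamard property. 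Thus $1-Q$ is harmonic for the Markov (Ruelle transfer) operator with maps $\tau_l$.

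\textbf{Step 2: maximum principle argument.} The maps $\tau_l$ are contractions with a compact attractor $X$, so it suffices to show $1-Q\equiv0$ on $X$. Let $x_0\in X$ maximize $1-Q$. Harmonicity propagates the maximum along every branch $\tau_{l_k}\cdots\tau_{l_1}x_0$ with nonzero weights, and hence along the closure of these orbits. Such orbits accumulate on the \emph{$m_B$-cycles}: finite sets $\{\xi_1,\dots,\xi_p\}\subset X$ with $\tau_{l_j}\xi_j=\xi_{j+1}$ and $|m_B(\tau_{l_j}\xi_j)|=1$. For each cycle point $\xi$, the hypothesis $\mathcal{Z}(\mu)=\emptyset$ supplies some $k\in\mathbb{Z}^n$ with $\hat\mu(\xi+k)\neq0$. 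I then adjust the digits (using the freedom mod $R^*\mathbb{Z}^n$) so that the corresponding integer points $-\xi+$ (cycle data) lie in $\Lambda$. This forces $Q(\xi)>0$ at each cycle, then $Q=1$ there, and by the maximum principle $1-Q\equiv0$.

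\textbf{Main obstacle.} Step 2 is where the difficulty lies, for two reasons. First, the $m_B$-cycles might be infinite in number or non-isolated. Second, the weights might degenerate, so that the invariant set carrying the maximum is a whole subvariety rather than finitely many cycles. This is exactly where the argument of Dutkay--Haussermann--Lai is needed. My first approach is: pass to the minimal compact invariant sets of the random dynamical system. If such a set is not a finite cycle, show that $B$ lies in a proper rational subspace-like structure, the ``quasi-product form''. Then conjugate $R$ to block-triangular form and reduce dimension by induction on $n$. The hypothesis $\mathcal{Z}(\mu)=\emptyset$ passes to the factor measures via the product decomposition of $\hat\mu$. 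In dimension $1$ this reduction is unnecessary: one uses compactness and the continuity of $\hat\mu$ to get $\inf_{\xi\in X}\max_{|k|\le K}|\hat\mu(\xi+k)|>0$, which keeps the cycle set finite, as in \L aba--Wang.
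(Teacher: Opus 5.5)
Your converse direction is correct. The forward direction has a genuine gap in Step 2. Note that the paper does not prove this statement; it quotes it from \cite{Dut-Hau-Lai2019}.

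\textbf{Where Step 2 fails.} First, the claim that maximising orbits accumulate on finitely many $m_B$-cycles is not available in general in $\mathbb{R}^n$. This is the obstruction you flag yourself, and you do not resolve it. Your remedy (quasi-product form plus induction on $n$) is only named, and it would not close as stated. In a quasi-product form the second-block digits depend on the first-block digit, so $\hat\mu$ is not a product of Fourier transforms of two self-affine measures, and there is no factor measure to which $\mathcal{Z}(\mu)=\emptyset$ passes. What one gets on the fibres are infinite convolutions with varying digit sets, so an induction over self-affine measures has nothing to apply to. As far as I recall, in \cite{Dut-Hau-Lai2019} that machinery is used for the complementary case $\mathcal{Z}(\mu)\neq\emptyset$.

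Second, even if every invariant set were a finite cycle, ``$Q(\xi)>0$ at each cycle, then $Q=1$ there'' does not follow. Harmonicity only makes $1-Q$ constant along a cycle. Worse, for $\Lambda=L+R^*\Lambda\subset\mathbb{Z}^n$, the integer $k$ supplied by $\mathcal{Z}(\mu)=\emptyset$ can lie in $\Lambda$ only if $k=-\xi$. To see this, suppose $\lambda\in\Lambda$ and $\hat\mu(\xi+\lambda)\neq0$ at a cycle point $\xi=\xi_1$. The unique expansion $\lambda=l+R^*\lambda'$ forces $l=l_1$, since all other weights vanish on the cycle, and it gives $\hat\mu(\xi_2+\lambda')\neq0$. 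Iterating produces integers $\lambda^{(j)}\in\Lambda$ with $\xi_j+\lambda^{(j)}=R^{*-(j-1)}(\xi+\lambda)$. These are bounded, hence eventually periodic, and periodicity under the contraction $R^{*-1}$ forces $\xi+\lambda=0$. So every cycle not contained in $\mathbb{Z}^n$ carries $Q\equiv0$ for every self-similar integer $\Lambda$.

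Re-choosing $L$ modulo $R^*\mathbb{Z}^n$ changes the maps $\tau_l$ and hence the cycles themselves, so that step is circular. Branch-dependent choices instead destroy $\Lambda=L+R^*\Lambda$, on which your refinement identity for $Q$ rests. Your one-dimensional remark also misplaces the compactness bound. Finiteness of cycles in $\mathbb{R}$ comes from discreteness of $\{|m_B|=1\}$, not from $\inf_\xi\max_k|\hat\mu(\xi+k)|>0$.

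\textbf{The missing idea.} That compactness bound is essentially the whole proof once you give up self-similarity of $\Lambda$; I believe this is how the case $\mathcal{Z}(\mu)=\emptyset$ is handled in the cited work.

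\emph{Uniform lower bound.} By continuity, $\mathbb{Z}^n$-translation and compactness of $[0,1]^n$, the hypothesis $\mathcal{Z}(\mu)=\emptyset$ yields $\epsilon>0$ such that every $\xi$ admits some $k\in\mathbb{Z}^n$ with $|\hat\mu(\xi+k)|\ge\epsilon$.

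\emph{Finite levels.} Put $\mu_t=\delta_{R^{-1}B}*\cdots*\delta_{R^{-t}B}$. Then $\hat\mu(\xi)=\hat\mu_t(\xi)\hat\mu(R^{*-t}\xi)$, the function $\hat\mu_t$ is $R^{*t}\mathbb{Z}^n$-periodic, and $L+R^*L+\cdots+R^{*(t-1)}L$ is a spectrum of $\mu_t$.

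\emph{Nested construction.} Build nested integer spectra $\Lambda_{t_j}$ of $\mu_{t_j}$ as follows. Let $\Lambda_{t_{j+1}}$ consist of $\Lambda_{t_j}$ together with the points $\lambda+R^{*t_j}\gamma$, where $\lambda\in\Lambda_{t_j}$ and $0\neq\gamma\in L+\cdots+R^{*(t_{j+1}-t_j-1)}L$. Shift each new point by a vector of $R^{*t_{j+1}}\mathbb{Z}^n$ so that $|\hat\mu(R^{*-t_{j+1}}\lambda')|\ge\epsilon$. Choose $t_{j+1}$ so large that the finitely many old points satisfy $|\hat\mu(R^{*-t_{j+1}}\lambda)|\ge 1/2$. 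Periodicity of $\hat\mu_{t_{j+1}}$ keeps this a spectrum of $\mu_{t_{j+1}}$.

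\emph{Limit.} The union $\Lambda=\bigcup_j\Lambda_{t_j}\subset\mathbb{Z}^n$ is orthogonal for $\mu$, so $Q_{\mu,\Lambda}\le 1$. For fixed $\xi$, Lipschitz continuity of $\hat\mu$ gives $|\hat\mu_{t_j}(\xi+\lambda)|^2\le C|\hat\mu(\xi+\lambda)|^2$ on $\Lambda_{t_j}$ for large $j$. Dominated convergence applied to $1=\sum_{\lambda\in\Lambda_{t_j}}|\hat\mu_{t_j}(\xi+\lambda)|^2$ then gives $Q_{\mu,\Lambda}(\xi)=1$. No cycle analysis or dimension reduction is needed under the hypothesis $\mathcal{Z}(\mu)=\emptyset$.
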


The similarity transformation implies that both spectrality and translational tiling are preserved under the action of matrix operators, the proof of which can be found in \cite{Chen-Liu-Zheng2024} and the references therein.
	\begin{theorem}[\cite{Chen-Liu-Zheng2024}] \label{2.7}
		Let $R_1,R_2 \in M_n(\mathbb{R})$ be two expanding matrices, and let $B_1,B_2 \subset \mathbb{R}^n$ be two finite digit sets with the same cardinality. If there exists a matrix $Q \in M_n(\mathbb{R})$ such that $R_2=QR_1Q^{-1}$ and $B_2=QB_1$, then
		\vspace{-0.2cm}
		\begin{enumerate}[\rm(i)]
			\item $\mu_{R_1,B_1}$ is a spectral measure with a spectrum $\Lambda$ if and only if $\mu_{R_2,B_2}$ is a spectral measure with a spectrum $Q^{\ast -1} \Lambda$;\vspace{-0.2cm}
			\item $T(R_1,B_1)$  is a translation tile with a tiling set $\Gamma$ if and only if $T(R_2,B_2)$ is a translation tile with a tiling set $Q \Gamma$.
		\end{enumerate}
\end{theorem}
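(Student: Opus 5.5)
The plan is to show that the linear map $Q$ intertwines the two iterated function systems. Once that is in place, both (i) and (ii) follow from the fact that $Q$ transports measures, exponentials and Lebesgue-null sets in a controlled way. Throughout, $Q$ is invertible, since $Q^{-1}$ appears in the hypothesis. I read $\mu_{R,B}$ and $T(R,B)$ with the convention of Definition \ref{2.5}: the maps are $\phi_b(x)=R^{-1}(x+b)$, so that $\mu_{R,B}=\mu_{R^{-1},B}$ in the notation of \eqref{1.1}.

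\textbf{Step 1 (intertwining).} Write $\phi^{(1)}_b(x)=R_1^{-1}(x+b)$ for $b\in B_1$ and $\phi^{(2)}_{b'}(x)=R_2^{-1}(x+b')$ for $b'\in B_2=QB_1$. Since $R_2^{-1}=QR_1^{-1}Q^{-1}$, we get
$$\phi^{(2)}_{Qb}(Qx)=QR_1^{-1}Q^{-1}(Qx+Qb)=Q\phi^{(1)}_b(x).$$
\emph{Attractor.} Applying $Q$ to the invariance equation of $T_1:=T(R_1,B_1)$ shows that $QT_1$ is a nonempty compact set satisfying the invariance equation of $(R_2,B_2)$. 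By uniqueness of the attractor, $T(R_2,B_2)=QT_1$. The same identity can be read off termwise from the radix expansion \eqref{eqa}.
\emph{Measure.} Let $\nu=\mu_1\circ Q^{-1}$ be the pushforward of $\mu_1:=\mu_{R_1,B_1}$. By the intertwining identity, $\nu$ satisfies \eqref{1.1} for $(R_2,B_2)$, because $\#B_2=\#B_1$. Uniqueness of the self-similar measure then gives $\mu_2:=\mu_{R_2,B_2}=\mu_1\circ Q^{-1}$.

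\textbf{Step 2 (spectrality).} The map $U:f\mapsto f\circ Q$ is a unitary operator from $L^2(\mu_2)$ onto $L^2(\mu_1)$. Indeed, $\int|f\circ Q|^2\,d\mu_1=\int|f|^2\,d\mu_2$, and its inverse is $g\mapsto g\circ Q^{-1}$. For every $\lambda$,
$$U\big(e^{2\pi i\langle\lambda,\cdot\rangle}\big)(x)=e^{2\pi i\langle\lambda,Qx\rangle}=e^{2\pi i\langle Q^{*}\lambda,x\rangle}.$$
Hence $U$ maps $\mathcal{E}_{Q^{*-1}\Lambda}$ bijectively onto $\mathcal{E}_{\Lambda}$. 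A unitary operator carries orthonormal bases to orthonormal bases, so $\Lambda$ is a spectrum of $\mu_1$ if and only if $Q^{*-1}\Lambda$ is a spectrum of $\mu_2$. This gives (i).

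As an alternative check, one can use Theorem \ref{2.15}. From $\hat\mu_2(\xi)=\hat\mu_1(Q^*\xi)$ we get $Q_{\mu_2,Q^{*-1}\Lambda}(x)=Q_{\mu_1,\Lambda}(Q^*x)$, and one side is $\equiv1$ exactly when the other is.

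\textbf{Step 3 (tiling).} Suppose $T_1+\Gamma$ is a tiling, meaning $\bigcup_{\gamma}(T_1+\gamma)=\mathbb{R}^n$ and the translates meet only in Lebesgue-null sets. Apply $Q$: we get $Q(T_1+\gamma)=QT_1+Q\gamma=T(R_2,B_2)+Q\gamma$. Since $Q$ is a bijection of $\mathbb{R}^n$ that multiplies Lebesgue measure by $|\det Q|\neq0$, it preserves both the covering property and null overlaps. So $Q\Gamma$ is a tiling set for $T(R_2,B_2)$. The converse follows by applying the same argument with $Q^{-1}$, which gives (ii).

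There is no real obstacle here. The only delicate point is bookkeeping: keeping the convention for the maps ($R^{-1}$ versus $\rho$) consistent, and checking that it is $Q^{*-1}$, rather than $Q$, that acts on frequencies. That is exactly what the computation $\langle\lambda,Qx\rangle=\langle Q^*\lambda,x\rangle$ in Step 2 settles.
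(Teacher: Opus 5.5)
Your proof is correct. The paper gives no proof of this statement and simply cites \cite{Chen-Liu-Zheng2024}. Your conjugation argument is the standard proof of this fact: $T(R_2,B_2)=QT(R_1,B_1)$ and $\mu_{R_2,B_2}=\mu_{R_1,B_1}\circ Q^{-1}$ by uniqueness, hence $\hat{\mu}_{R_2,B_2}(\xi)=\hat{\mu}_{R_1,B_1}(Q^{*}\xi)$, and spectra are transported either by the unitary $f\mapsto f\circ Q$ or via Theorem \ref{2.15}. The one point you leave implicit is that this uniqueness holds for the maps $R^{-1}(x+b)$ because $R^{-1}$ is a strict contraction in a suitable adapted norm; that is the same fact needed to define $T(R,B)$ and $\mu_{R,B}$ in the first place.
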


\section{Spectrality of self-similar measures}\label{sec3}
This section is devoted to studying the spectrality of self-similar measures associated with product-form like digit sets given by \eqref{eq1.3}. We prove Theorem \ref{1.2} by establishing the necessity and sufficiency separately in Theorems \ref{thm3.2} and \ref{thm3.3}.

\medskip


Recall that $\mu_{\rho,D}$ is a self-similar measure generated by $0<\rho<1$ and
\[
D = D_N \oplus m D_L,
\]
where $N,L,m \in \mathbb{N}^*$ with $N,L \ge 2$.
We now give an equivalent description of  $d$ defined in \eqref{2.11}
in terms of the prime factorization of $L$.
Let
\begin{equation}\label{3.5}
	L = L_1^{\alpha_1} L_2^{\alpha_2} \cdots L_\kappa^{\alpha_\kappa}
\end{equation}
be the prime factorization of $L$, where $L_i$ are distinct primes greater than two, and
$\alpha_i \ge 1$ for $1 \le i \le \kappa$. 
Notice that the spectrality of $\mu_{\rho,D}$
(see Proposition~\ref{pro1.1}) imposes a constraint on the parameter $L$, namely $L \mid p$ with $p := \rho^{-1} \in \mathbb{N}^*$. This allows us to also write
\begin{equation}\label{3.6}
	m = L_1^{\tau_1} L_2^{\tau_2} \cdots L_\kappa^{\tau_\kappa} m',
	\qquad
	p = L_1^{l_1} L_2^{l_2} \cdots L_\kappa^{l_\kappa} p',
\end{equation}
where $\tau_i\geq 0$, $l_i\ge \alpha_i$ and
$
\gcd(L_i,m') = \gcd(L_i,p') = 1
~ \text{for all } 1 \le i \le \kappa.
$ 
 Therefore, 
for each $ i $, there exist unique integers $d_i \ge 0$
and $0 \le r_i < l_i$ such that
\begin{equation}\label{3.7}
	\tau_i + \alpha_i - 1 = d_i l_i + r_i .
\end{equation}
Thus $d$ defined in \eqref{2.11} can be equivalently expressed as
\begin{equation}\label{eq3.1.0}
	d = \max_{1 \le i \le \kappa} d_i .
\end{equation}

\vspace{-0.5cm}
\subsection{The necessity.}
\begin{proposition}\label{pro1.1}
Let $\mu_{\rho,D}$ be a self-similar measure defined by \eqref{1.1}, where
$0<\rho<1$ and $D$ is given by \eqref{eq1.3}.
If $\mu_{\rho,D}$ is a spectral measure, then $\rho^{-1}$ is a positive integer that is divisible by both $N$ and $L$.
\end{proposition}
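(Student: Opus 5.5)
The plan has three steps: show $\rho^{-1}\in\mathbb{N}$ via Theorem~\ref{2.4}, then obtain $N\mid p$ and $L\mid p$ by applying Remark~\ref{2.30} to the direct sum decomposition $D=D_N\oplus mD_L$.

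\textbf{Integrality of $\rho^{-1}$.} The mask polynomial factorizes as $m_D(x)=m_{D_N}(x)\,m_{D_L}(mx)$. Since $m_{D_n}(x)=\frac{1}{n}\frac{e^{2\pi i n x}-1}{e^{2\pi i x}-1}$, we have $Z(m_{D_n})=\frac{1}{n}(\mathbb{Z}\setminus n\mathbb{Z})$. Hence
\[
Z(m_D)=\frac{1}{N}(\mathbb{Z}\setminus N\mathbb{Z})\ \cup\ \frac{1}{mL}(\mathbb{Z}\setminus L\mathbb{Z})\subset \frac{1}{mNL}\mathbb{Z}.
\]
Theorem~\ref{2.4} with $\alpha=1/(mNL)$ then gives $p:=\rho^{-1}\in\mathbb{N}$, and $p\ge 2$.

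\textbf{$N\mid p$.} Suppose $N\nmid p$. I would look for $n\ge1$ with
\[
Z(\hat\delta_{\rho D_N})=\tfrac{p}{N}(\mathbb{Z}\setminus N\mathbb{Z})\subset Z(\hat\delta_{\rho^{n}D_N})=\tfrac{p^{n}}{N}(\mathbb{Z}\setminus N\mathbb{Z}),
\]
which would be an inclusion between two factors of the self-similar structure. Remark~\ref{2.30} is stated for distinct components $D_{(j_1)}\neq D_{(j_2)}$, but Theorem~\ref{2.20} applies just as well with $\mu_1=\delta_{\rho D_N}*\delta_{\rho^2 D_N}*\cdots$ (over a suitable set of levels) and $\mu_2$ the remaining convolution factors. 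Reversing the inclusion seems wrong, so I would instead follow the Dai--He--Lau argument for consecutive digits, which proves exactly that $\mu_{1/p,D_N}$ spectral implies $N\mid p$. The idea is to compare $\mu_{\rho,D}$ with $\nu=\mu_{\rho,D_N}*\mu_{\rho,mD_L}$, which is exactly $\mu_{\rho,D}$ since the infinite convolution splits. Write $g=\gcd(p,N)<N$. Then $\frac{1}{N}(\mathbb{Z}\setminus N\mathbb{Z})$ scaled by $p$ contains elements whose differences fail to lie in $Z(\hat\mu)$. An orthogonal set $\Lambda$ has differences in $\bigcup_n p^n Z(m_D)$. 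Any orthogonal set for $\mu$ is then also orthogonal for the convolution $\mu_1*\mu_2$, where $\mu_1$ collects all factors except one copy of $\delta_{\rho^{k}D_N}$ modulo the non-divisibility, and Theorem~\ref{2.20} rules out a spectrum.

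\textbf{$L\mid p$, and the main obstacle.} The same scheme with the roles of $D_N$ and $mD_L$ exchanged should give $L\mid p$. The main obstacle is making this "absorbed factor" step rigorous when $N\nmid p$. The concrete route I would try first is the one of \cite{Dai-He-Lau_2014}: if $\gcd(p,N)=g<N$, show that every element of $Z(\hat\mu)$, after multiplication by $N/g$, lands in the zero set of a factor already present. Then use Theorem~\ref{2.15}, evaluating $Q_{\mu,\Lambda}$ at a point like $x=p/N$, to show $Q<1$. If that fails, I would fall back on bounding the number of mutually orthogonal exponentials in each level and comparing with $\#D$.
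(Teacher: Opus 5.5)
Your first step is correct and is the paper's: $Z(m_D)=\frac1N(\mathbb{Z}\setminus N\mathbb{Z})\cup\frac1{mL}(\mathbb{Z}\setminus L\mathbb{Z})$ lies in a lattice, so Theorem~\ref{2.4} gives $p=\rho^{-1}\in\mathbb{N}$.

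The divisibility steps, however, are not proved. You rightly expect to apply Theorem~\ref{2.20} to a factor whose zero set is swallowed by the others, but you never identify such a factor, and the candidates you name fail.
\begin{itemize}
\item The inclusion $Z(\hat\delta_{\rho D_N})\subset Z(\hat\delta_{\rho^nD_N})$ is false for every $n\ge 2$: the point $p/N$ lies in the left-hand side but not in the right-hand side.
\item Removing one whole copy $\delta_{\rho^kD_N}$ does not work in general either. Take $N=4$, $p=6$, $m=4$, $L=2$. The point $3\cdot 6^{k-1}=\frac{6^k}{4}\cdot 2$ lies in $Z(\hat\delta_{\rho^kD_4})$. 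It lies in no $\frac{6^j}{4}(\mathbb{Z}\setminus 4\mathbb{Z})$ with $j\neq k$, and in no $\frac{6^j}{8}(2\mathbb{Z}+1)$. So an orthogonal set for $\mu$ need not be orthogonal for the remaining convolution.
\item Citing Dai--He--Lau does not close the gap. Their theorem concerns $\mu_{\rho,D_N}$ alone, and you give no reason why non-spectrality should survive convolution with $\mu_{\rho,mD_L}$.
\item Evaluating $Q_{\mu,\Lambda}$ at $p/N$, or counting orthogonal exponentials level by level, are only suggestions, not arguments.
\end{itemize}

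The missing idea is to split $D_N$ itself according to $g=\gcd(N,p)<N$. Write $N=gN'$ and $p=gp'$, with $N'\ge 2$ and $\gcd(p',N')=1$. Then $D=D_g\oplus gD_{N'}\oplus mD_L$. Let $\mu_2=\delta_{\rho^2 gD_{N'}}$ be this sub-piece of the second-level factor, and let $\mu_1$ be the convolution of all remaining factors. Then
\[
Z(\hat\mu_2)=\tfrac{p^2}{gN'}(\mathbb{Z}\setminus N'\mathbb{Z}),\qquad \tfrac{p^2a}{gN'}=\tfrac{p}{N}\,(gp'a).
\]
Since $\gcd(p',N')=1$, we have $gp'a\notin N\mathbb{Z}$ whenever $N'\nmid a$. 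Hence $Z(\hat\mu_2)\subset Z(\hat\delta_{\rho D_N})\subset Z(\hat\mu_1)$. So every orthogonal set of $\mu$ is orthogonal for $\mu_1$, and Theorem~\ref{2.20} rules out a spectrum.

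The same argument gives $L\mid p$. Split $mD_L=mD_g\oplus mgD_{L'}$ with $g=\gcd(L,p)$ and $L=gL'$, and compare the second-level piece $\delta_{\rho^2 mgD_{L'}}$ with $\delta_{\rho mD_L}$. This is the paper's argument. Without the sub-factor $gD_{N/g}$ (respectively $mgD_{L/g}$), your outline does not go through.
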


\begin{proof}
It can be checked that $Z(m_D)$ is contained in a lattice set. Hence, by Theorem \ref{2.4}, the hypothesis that $\mu_{\rho,D}$ is a spectral measure implies $\rho^{-1} \in \mathbb{N}^*$. Assume that $\rho^{-1}=p$, then we will show that $N$ and $L$ must be factors of $p$.

First, we prove that $N \mid p$. Suppose for contradiction that this is not the case, and let
$d=\gcd(N,p)$. Since $1\le d<N$, there exist positive integers $p'$ and $N'\geq 2$ such that $p = dp'$, $N = dN'$ and $\gcd(p',N')=1$.
At this time, $D$ can be discomposed into
$$D=D_d \oplus dD_{N'}\oplus mD_L,$$
which implies that the self-similar measure $\mu_{\rho,D}$ can be expressed as
\begin{align*}
\mu_{\rho,D}=&\delta_{\rho D} \ast \delta_{\rho^2 D} \ast \delta_{\rho^3 D} \ast \cdots  \notag  \\
=&(\delta_{\rho D_N}* \delta_{\rho mD_L})* ( \delta_{\rho^2 D_d}  \ast \delta_{\rho^2 d D_{N'}}\ast \delta_{\rho^2 mD_L}) \ast \delta_{\rho^3 D} \cdots.
\end{align*}
By using $\gcd(p',N')=1$, we can obtain
\begin{align*}
	Z(\hat{\delta}_{\rho^2 d D_{N'}})
	&= \frac{p^2}{dN'}\big(\mathbb{Z}\setminus N'\mathbb{Z}\big)  \\
	&= \frac{pp'}{N'}\big(\mathbb{Z}\setminus N'\mathbb{Z}\big)  \\
	&\subset \frac{p}{N'}\big(\mathbb{Z}\setminus N'\mathbb{Z}\big) \\
	&\subset \frac{p}{N}\big(\mathbb{Z}\setminus N\mathbb{Z}\big)
	= Z(\hat{\delta}_{\rho D_N}).
\end{align*}
It follows from Remark $\ref{2.30}$ that $\mu_{\rho,D}$ is not a spectral measure. Hence we have $N \mid p$.

The conclusion $L \mid p$ can be proved by a similar argument, and hence we omit the details for brevity.

\end{proof}

To prove the necessity of Theorem \ref{1.2}, we also need some new notation, defined as follows.  
Let $\sigma = \gcd(m, p^d)$. We can then write $m = \sigma \tilde{m}$ and  $p^d = \sigma \tilde{p}$ for some integers   $\tilde{m}$ and $\tilde{p}$ satisfying $\gcd(\tilde{m}, \tilde{p}) = 1$. Now, define
$$\tilde{d} = \gcd(N, \tilde{m}).$$ Then $N = \tilde{d} N_1$ and $\tilde{m} = \tilde{d} m_1$ for some $N_1, m_1 \in \mathbb{Z}$ with $\gcd(N_1, m_1) = 1$.
Consequently, $N$ and $m$ can be expressed as
\begin{align}\label{3.3}
N = \tilde{d} N_1 \quad \text{and} \quad m = \sigma \tilde{d} m_1.
\end{align}
Without loss of generality, we assume that $d_1=d=\max\{d_i : 1 \leq i \leq \kappa\}$. Notice that $d_1$ depends on $\alpha_1$, $\tau_1$ and $l_1$ by \eqref{3.7}. Thus, by factoring out $L_1$ from the prime factorization of $L$, we write $L = L_1 L'$, where $L' = L_1^{\alpha_1-1} L_2^{\alpha_2} \cdots L_\kappa^{\alpha_\kappa}$. Then the digit set $D$ decomposes as
\begin{equation}\label{D-S-Decompose}
D= D_N \oplus m D_L = \left(D_{\tilde{d}} \oplus \tilde{d} D_{N_1} \right) \oplus m \left( D_{L'} \oplus L' D_{L_1} \right).
\end{equation}
It is clear to see that if  $N_1=1$, then $D_{\tilde{d}} \oplus \tilde{d} D_{N_1}=D_N$. Hence, we shall restrict our attention to  $N_1>1$.
Combining \eqref{D-S-Decompose} with formulas \eqref{2.1} and \eqref{2.2}, we define the following three subsets related to the zero set $Z(\hat{\mu}_{\rho,D})$:
\begin{align}
	R_1 &:= Z(\hat{\delta}_{\rho \tilde{d}D_{N_1}})
	= p \frac{\mathbb{Z} \setminus N_1 \mathbb{Z}}{\tilde{d} N_1},
	\label{eq-R1} \\[0.5em]
	S_1 &:= Z(\hat{\delta}_{\rho^{d+1} mL'D_{L_1}})
=p^{d+1} \frac{\mathbb{Z} \setminus L_1 \mathbb{Z}}{L m}
=p \frac{\tilde{p} (\mathbb{Z} \setminus L_1 \mathbb{Z})}
	{L \tilde{d} m_1},
	\label{eq-S1} \\[0.5em]
	S_2 &:= Z(\widehat{\mu'})
	= \left( \bigcup_{j=1}^{d} p^j
	\frac{\mathbb{Z} \setminus L \mathbb{Z}}{m L} \right)
	\cup \left( p^{d+1}
	\frac{\mathbb{Z} \setminus L' \mathbb{Z}}{L' m} \right),
	\label{eq-S2}
\end{align}
where $\mu':=\delta_{\rho mD_L}*\delta_{\rho^2 mD_L}*\cdots*\delta_{\rho^d mD_L}*\delta_{\rho^{d+1}m D_{L'}}$.

\medskip
With the above preparations, we now prove the following proposition.

\begin{proposition}\label{empty-set}
Let $\mu_{p^{-1},D}$ be a self-similar measure defined by \eqref{1.1}, where $p\in\mathbb{N}^*$ and $D$ is given by \eqref{eq1.3}.  Suppose that $L\mid p$, $N \nmid \frac{m}{\gcd(m, p^d)}$ and $0 \in \Lambda$ is a bi-zero set of $\mu_{p^{-1},D}$. Then
\begin{equation*}
\Lambda\cap (R_1 \setminus (S_1\cup S_2))=\emptyset \quad {\rm{or}} \quad\Lambda\cap (S_1 \setminus (R_1\cup S_2))=\emptyset,
\end{equation*}
where $R_1, S_1$ and $S_2$ are defined in \eqref{eq-R1}, \eqref{eq-S1} and \eqref{eq-S2}, respectively.
\end{proposition}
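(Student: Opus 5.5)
The plan is to argue by contradiction. Suppose both intersections are nonempty. Pick
$$\lambda_1\in\Lambda\cap(R_1\setminus(S_1\cup S_2)),\qquad \lambda_2\in\Lambda\cap(S_1\setminus(R_1\cup S_2)).$$
Since $\Lambda$ is a bi-zero set, $\lambda_1-\lambda_2\in Z(\hat\mu_{p^{-1},D})$. The goal is to show that this is impossible.

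\textbf{Step 1: the target set.} By \eqref{2.2} and the direct sum structure $D=D_N\oplus mD_L$, we have
$$Z(\hat\mu_{p^{-1},D})=\bigcup_{n\ge1}p^n\Big(\tfrac{\mathbb Z\setminus N\mathbb Z}{N}\cup\tfrac{\mathbb Z\setminus L\mathbb Z}{mL}\Big).$$
Write
$$\lambda_1=\frac{pa}{\tilde dN_1}\ (a\notin N_1\mathbb Z),\qquad \lambda_2=\frac{p\tilde p b}{L\tilde d m_1}\ (b\notin L_1\mathbb Z).$$
Then
$$\lambda_1-\lambda_2=\frac{p\,(aLm_1-\tilde p bN_1)}{\tilde d N_1 L m_1}.$$
We must rule out each piece of the union above.

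\textbf{Step 2: the pieces coming from $D_N$.} The hypothesis $N\nmid \tilde m$ gives $N_1>1$. Since $\gcd(N_1,m_1)=1$, the $N_1$-part of the denominator of $\lambda_1-\lambda_2$ comes only from $\lambda_1$, provided $\lambda_2$ contributes no factor of $N_1$ in lowest terms. I expect this to follow from $\gcd(\tilde m,\tilde p)=1$ and $N_1\mid\tilde m/\tilde d$-type bookkeeping. Because $N\mid p$, each piece $p^n(\mathbb Z\setminus N\mathbb Z)/N$ with $n\ge2$ lies in $p\mathbb Z$. An element with genuine $N_1$-denominator after factoring out $p$ cannot lie there. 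The $n=1$ piece is $p(\mathbb Z\setminus N\mathbb Z)/N$, which contains $R_1$. Membership of $\lambda_1-\lambda_2$ in it would force $\lambda_2\in p\mathbb Z/N$. Comparing $L_1$-adic valuations, this contradicts $\lambda_2\in S_1$ being at level exactly $d+1$ with $b\notin L_1\mathbb Z$; alternatively it can be pushed to contradict $\lambda_2\notin R_1$.

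\textbf{Step 3: the pieces coming from $mD_L$.} Here I will use $L_1$-adic valuations, based on \eqref{3.7} and the maximality $d=d_1$.
\begin{itemize}
\item The valuation $v_{L_1}(\lambda_2)$ equals exactly $(d+1)l_1-\tau_1-\alpha_1$, since $b\notin L_1\mathbb Z$.
\item The valuation $v_{L_1}(\lambda_1)\ge l_1$, since $\gcd(L_1,\tilde dN_1)$ only involves primes of $N$. A little care is needed if $L_1\mid N$; then $N\mid p$ still gives $v_{L_1}(\lambda_1)\ge 0$ relative to $p/N$, and I will check that this suffices.
\end{itemize}
Because $r_1<l_1$, the valuation of $\lambda_2$ is strictly smaller, so $v_{L_1}(\lambda_1-\lambda_2)=v_{L_1}(\lambda_2)$. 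Elements of $p^n(\mathbb Z\setminus L\mathbb Z)/(mL)$ with $n\le d$ lie in $S_2$. Elements with $n\ge d+2$ have $L_1$-valuation at least $(d+2)l_1-\tau_1-\alpha_1$, which is too large. This leaves only $n=d+1$.

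At level $n=d+1$, the $L_1$-valuation matches that of $S_1$. It remains to show that $\lambda_1-\lambda_2$ has the same residue pattern as $\lambda_2$ modulo the other primes of $L$, so that membership would transfer back to $\lambda_2\in S_2$ or $\lambda_1\in S_1\cup S_2$. This contradicts the choice of $\lambda_1$ and $\lambda_2$.

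\textbf{Main obstacle.} The hard part is Step 3 at $n=d+1$, together with the interaction when $N$ and $L$ share primes. There the valuations at primes $L_i$, $i\ne1$, and at primes of $N_1$ must be tracked simultaneously. I would first treat the case $\gcd(N,L)=1$ cleanly, and then handle the shared primes using $N\mid p$ and $L\mid p$ to absorb them into $p$.
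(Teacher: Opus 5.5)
Your skeleton (assume both intersections are nonempty, then split according to which component of $Z(\hat\mu_{p^{-1},D})$ contains $\lambda_1-\lambda_2$) is the right one. However, the valuation arguments inside the cases fail in the shared-prime situation you flag as the main obstacle, and the hypotheses allow that situation.

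\textbf{Where the valuation arguments fail.} The claim $v_{L_1}(\lambda_1)\ge l_1$ is false when $L_1\mid N$. Take $L=3$, $N=p=18$, $m=21$. Then $d=0$, $\tilde p=1$, $\tilde d=3$, $N_1=6$, $m_1=7$, so $R_1=\mathbb Z\setminus 6\mathbb Z$, $S_1=2(\mathbb Z\setminus 3\mathbb Z)/7$ and $S_2=\emptyset$. Consider $\lambda_1=1\in R_1\setminus(S_1\cup S_2)$ and $\lambda_2=16/7\in S_1\setminus(R_1\cup S_2)$.
\begin{itemize}
\item Both have $3$-adic valuation $0$, while $v_3(\lambda_1-\lambda_2)=v_3(-9/7)=2$. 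This equals your lower bound $(d+2)l_1-\tau_1-\alpha_1$ for level $n=d+2$. So neither $v_{L_1}(\lambda_1-\lambda_2)=v_{L_1}(\lambda_2)$ nor the exclusion of levels $n\ge d+2$ holds.
\item In Step 2 the $L_1$-adic comparison also gives nothing. The point $2\in S_1\cap p\mathbb Z/N$ has $v_3(2)=0$; it is excluded only because $2\in R_1$.
\item The ``genuine $N_1$-denominator'' claim is unjustified, since $N_1$ can share primes with $L$ (here $3\mid N_1$).
\item For levels $n\le d$ you only observe that $\lambda_1-\lambda_2\in S_2$, which is not forbidden.
\item The level $n=d+1$ is left as an unproved ``residue pattern'' assertion.
\item Step 2 uses $N\mid p$, which is not a hypothesis of the proposition; only $L\mid p$ is.
\end{itemize}

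\textbf{The missing idea.} Stop comparing valuations of $\lambda_1$ and $\lambda_2$. Instead, convert each case into membership of $\lambda_1$ or $\lambda_2$ itself in a forbidden set.

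First case: $\lambda_1-\lambda_2=p^{j}a_3/N$ with $j\ge1$ and $a_3\notin N\mathbb Z$. Since $N=\tilde dN_1$, we get $\lambda_2=p\,c/(\tilde dN_1)$ with $c=a-p^{j-1}a_3=N_1\tilde p b/(Lm_1)\in\mathbb Z$. Moreover $c\notin N_1\mathbb Z$, because $\tilde p b=Lm_1v$ is impossible: maximality $d_1=d$ in \eqref{3.7} gives $v_{L_1}(\tilde p)=dl_1-\min\{\tau_1,dl_1\}\le\alpha_1-1<\alpha_1$, while $L_1\nmid b$. Hence $\lambda_2\in R_1$, a contradiction. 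This works for every $j$ at once and does not use $N\mid p$.

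Second case: $\lambda_1-\lambda_2=p^{j}a_4/(mL)$ with $a_4\notin L\mathbb Z$. Write $\lambda_1=\lambda_2+(\lambda_1-\lambda_2)$.
\begin{itemize}
\item If $j\le d$, then $L\mid p$ gives $\lambda_1=p^{j}(a_4+p^{d+1-j}b)/(mL)\in p^{j}(\mathbb Z\setminus L\mathbb Z)/(mL)\subset S_2$.
\item If $j\ge d+1$, then $\lambda_1=p^{d+1}c'/(mL)$ with $c'=b+p^{j-d-1}a_4=Lm_1a/(N_1\tilde p)$. Since $\gcd(m_1,N_1)=1$ and $N_1\nmid a$, we have $c'\notin L\mathbb Z$. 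Thus $\lambda_1\in p^{d+1}(\mathbb Z\setminus L\mathbb Z)/(mL)\subset S_1\cup S_2$.
\end{itemize}
These two computations are the whole proof, and they are unaffected by common primes of $N$ and $L$.
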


\begin{proof}
Under the assumption $N \nmid \frac{m}{\gcd(m, p^d)}$, the value $N_1$ defined in \eqref{3.3} is greater than one. 
Suppose, for contradiction, that the conclusion does not hold.
 Then by \eqref{eq-R1} and \eqref{eq-S1}, there exist  $a_1 \in \mathbb{Z} \setminus N_1\mathbb{Z}$  and  $ a_2 \in \mathbb{Z} \setminus L_1\mathbb{Z}$, along with elements
 $ \lambda_1, \lambda_2 \in \Lambda$, such that
\begin{equation}\label{3.10}
 \lambda_1 = p \frac{a_1}{\tilde{d} N_1} \in \Lambda \cap \left( R_1 \setminus (S_1 \cup S_2) \right)
\end{equation}
and
\begin{equation}\label{3.11}
\lambda_2 =p^{d+1} \frac{a_2}{L m}= p \frac{\tilde{p} a_2}{L \tilde{d} m_1} \in \Lambda \cap \left( S_1 \setminus (R_1 \cup S_2) \right).
\end{equation}
Since $ \Lambda$ is an orthogonal set, we have $ \lambda_1 - \lambda_2 \in Z(\hat{\mu}_{p^{-1},D})$ by \eqref{2.3}. Then,
$\lambda_1 - \lambda_2$ must satisfy either
$$\lambda_1-\lambda_2\in \bigcup_{j=1}^{\infty}p^j\frac{\mathbb{Z} \setminus N\mathbb{Z}}{N}\quad {\rm{or}} \quad \lambda_1-\lambda_2\in \bigcup_{j=1}^{\infty}p^j\frac{\mathbb{Z} \setminus L\mathbb{Z}}{Lm}.$$

\textbf{Case 1:} $\lambda_1-\lambda_2 = p^{j_1} \frac{a_3}{N}$ for some integers $j_1\geq 1$ and $a_3 \in \mathbb{Z} \setminus N\mathbb{Z}$.

Since $N = \tilde{d} N_1$, we obtain
$$p \frac{a_1}{\tilde{d}N_1} -p \frac{\tilde{p}a_2}{L\tilde{d}m_1} = p^{j_1} \frac{a_3}{\tilde{d} N_1}.$$
Multiplying both sides by  $N_1Lm_1$  leads to
$$
Lm_1(a_1-p^{j_1-1}a_3)=N_1\tilde{p}a_2.
$$
Hence $L m_1 \mid N_1 \tilde{p} a_2$. Furthermore,  we can obtain $\frac{N_1 \tilde{p} a_2}{L m_1} \notin N_1 \mathbb{Z}.$

Indeed, if  $\frac{N_1 \tilde{p} a_2}{L m_1}=N_1 v$  for some  $v \in \mathbb{Z}$, then
\begin{equation}\label{3.9}
\tilde{p} a_2 = L m_1 v.
\end{equation}
Note that
$\tilde{p}=\frac{p^d}{\gcd(m,p^d)}=:L_1^{\sigma_1}p'',$
where $\sigma_1=dl_1-\min\{\tau_1, dl_1\}\geq 0$  and $p''\in\mathbb{Z}$ satisfies $\gcd(p'',L_1)=1$. Combining $d_1=d$ and \eqref{3.7}, one may obtain that
$$\tau_1 + \alpha_1 - 1 = d l_1 + r_1 \quad (0 \leq r_1 < l_1).$$
This shows that $0\leq\sigma_1\leq \alpha_1-1$. By \eqref{3.9} and $L = L_1^{\alpha_1} L_2^{\alpha_2} \cdots L_\kappa^{\alpha_\kappa}$, we have
\begin{align}\label{eq3.13}
	p''a_2=L_1^{\alpha_1-\sigma_1}L_2^{\alpha_2} \cdots L_\kappa^{\alpha_\kappa}m_1v.
\end{align}
Since  $\gcd(p'' a_2, L_1)=1$ and $\alpha_1-\sigma_1\geq 1$, the equation \eqref{eq3.13} leads to a contradiction.
 Therefore, we get $\frac{N_1 \tilde{p} a_2}{L m_1} \notin N_1 \mathbb{Z}$ and 
$$
\lambda_2 = p \frac{\tilde{p} a_2}{L \tilde{d} m_1} = p \cdot \frac{1}{\tilde{d} N_1} \cdot \frac{N_1 \tilde{p} a_2}{L m_1} \in p \frac{\mathbb{Z} \setminus N_1 \mathbb{Z}}{\tilde{d} N_1} = R_1,
$$
which contradicts the condition $\lambda_2 \in \Lambda \cap \left( S_1 \setminus (R_1 \cup S_2) \right)$.

\textbf{Case 2:} $\lambda_1-\lambda_2 = p^{j_2} \frac{a_4}{Lm}$ for some integers $j_2\geq 1$ and $a_4 \in \mathbb{Z} \setminus L\mathbb{Z}$.

Substituting \eqref{3.10} and \eqref{3.11} into the expression for $\lambda_1-\lambda_2$ gives
\begin{align}\label{3.4}
\lambda_1-\lambda_2 =p \frac{a_1}{\tilde{d} N_1}- p^{d+1} \frac{a_2}{L m}=p^{j_2} \frac{a_4}{Lm}.
\end{align}
We first show that  $j_2 \geq d + 1$. Suppose, to the contrary, that  $j_2 < d + 1$.
From \eqref{3.4}, $a_4\in\mathbb{Z} \setminus L \mathbb{Z}$, and $L \mid p$, we can obtain that
$$\lambda_1=p^{j_2} \frac{a_4+p^{d + 1 - j_2} a_2}{L m}\in p^{j_2}\frac{\mathbb{Z}\setminus L\mathbb{Z}}{Lm}\subset S_2.$$
This contradicts the assumption $\lambda_1\in\Lambda\cap\left( R_1\setminus(S_1\cup S_2)\right)$. Hence  $j_2 \geq d+1$.

From \eqref{3.4}, along with  $m=\sigma \tilde{d}m_1$ and $p^d=\sigma \tilde{p}$, we further obtain
$$L m_1 a_1 = N_1 \tilde{p}\left( a_2 + p^{j_2 - d - 1} a_4 \right).$$
Indeed, this implies
\begin{align}\label{eq 3.4}
a_2 + p^{j_2 - d-1}  a_4=\frac{Lm_1a_1}{N_1\tilde{p}} \in\mathbb{Z}\setminus L\mathbb{Z}.
\end{align}
Otherwise, we would have $\frac{L m_1 a_1}{N_1 \tilde{p}} = L z_0$
for some $z_0 \in \mathbb{Z}$, and hence
$
m_1 a_1 = N_1 \tilde{p} z_0 \in N_1 \mathbb{Z},
$
which contradicts the assumptions that $\gcd(m_1, N_1)=1$ and
$a_1 \in \mathbb{Z} \setminus N_1 \mathbb{Z}$.
 Thus, it follows from \eqref{eq 3.4} that
 \begin{align*}
	\lambda_1
	&= p\frac{a_1}{\tilde{d} N_1}= p\frac{\tilde{p}\big(a_2 + p^{\,j_2-d-1} a_4\big)}{\tilde{d} m_1 L} \in p^{d+1}\frac{\mathbb{Z}\setminus L\mathbb{Z}}{mL}
	\subset S_1 \cup S_2,
\end{align*}
which again contradicts the condition $\lambda_1 \in \Lambda \cap \left( R_1 \setminus (S_1 \cup S_2) \right)$.
Therefore, both cases lead to a contradiction and we complete the proof.

\end{proof}

Now we continue proving the necessity of Theorem \ref{1.2}.

\begin{theorem}\label{thm3.2}
Let $\mu_{\rho,D}$ be a self-similar measure defined by \eqref{1.1}, where $0<\rho<1$ and $D$ is given by \eqref{eq1.3}. If $\mu_{\rho,D}$ is a spectral measure, then $\rho^{-1}=p\in\mathbb{N}^*$ with $N\mid p$, $L\mid p$ and $N\mid\frac{m}{\gcd(m,p^d)}$, where $d \in \mathbb{N}$ is defined as \eqref{eq3.1.0}.
\end{theorem}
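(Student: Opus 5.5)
By Proposition \ref{pro1.1}, spectrality already forces $\rho^{-1}=p\in\mathbb{N}^*$ with $N\mid p$ and $L\mid p$. So only the third condition is left. We argue by contradiction: assume $N\nmid \frac{m}{\gcd(m,p^d)}$, so that $N_1>1$ in \eqref{3.3}. Let $\Lambda\ni 0$ be a spectrum. We will produce a nonzero $f\in L^2(\mu_{p^{-1},D})$ orthogonal to every $e^{2\pi i\lambda x}$, $\lambda\in\Lambda$.

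The first step is to factor the measure according to \eqref{D-S-Decompose}. Write
$$\mu_{p^{-1},D}=\delta_{\rho\tilde d D_{N_1}}*\delta_{\rho^{d+1}mL'D_{L_1}}*\mu'*\omega,$$
where $\omega$ collects all remaining factors ($\delta_{\rho D_{\tilde d}}$, the $\delta_{\rho^j D_N}$ with $j\ge2$, and the $mD_L$ factors of level $\ge d+2$). By \eqref{2.2}, \eqref{2.3} and the definitions \eqref{eq-R1}--\eqref{eq-S2},
$$\Lambda\setminus\{0\}\subset R_1\cup S_1\cup S_2\cup Z(\hat\omega).$$
Proposition \ref{empty-set} says $\Lambda$ misses one of the two "exclusive" pieces: either $R_1\setminus(S_1\cup S_2)$ or $S_1\setminus(R_1\cup S_2)$. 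Suppose, say, that $\Lambda$ misses $R_1\setminus(S_1\cup S_2)$; the other case is symmetric. Then every $\lambda\in\Lambda$ lies in a zero set coming from a factor other than $\delta_{\rho\tilde dD_{N_1}}$ (up to zeros of $\omega$, which are handled the same way).

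The second step builds $f$. Set $\nu:=\delta_{\rho^{d+1}mL'D_{L_1}}*\mu'*\omega$. Pick a nonconstant function $g$ on the atoms of $\delta_{\rho\tilde dD_{N_1}}$ whose discrete "Fourier transform" vanishes at $0$. The candidate is the measure $g\,\delta_{\rho\tilde dD_{N_1}}*\nu$, which is absolutely continuous with respect to $\mu_{p^{-1},D}$. By the Radon--Nikodym theorem it equals $f\,d\mu_{p^{-1},D}$ for some $f$, unique $\mu$-a.e. Then
$$\int e^{2\pi i\lambda x}\overline{f}\,d\mu=\widehat{g\delta}(\lambda)\,\hat\nu(\lambda).$$
This vanishes whenever $\hat\nu(\lambda)=0$, which is exactly the situation for every $\lambda\in\Lambda$ by the first step. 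Moreover $f\ne0$, since the measure $g\,\delta*\nu$ is nonzero. Hence $\mathcal{E}_\Lambda$ is incomplete, a contradiction.

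The main obstacle is making the factorization bookkeeping honest. The points of $\Lambda$ landing in $Z(\hat\omega)$, or in $R_1\cap(S_1\cup S_2)$, must be killed by $\hat\nu$ rather than by the $D_{N_1}$ factor. Choosing $\omega$ carefully should achieve this, so that the deleted factor is the only one whose zeros are excluded. One must also handle the fact that the convolution may be only absolutely continuous and not equal with density bounded. For the latter, I would check that $\frac{d(g\delta*\nu)}{d\mu}$ is bounded by $\max|g|\cdot N_1$, so $f\in L^2$.
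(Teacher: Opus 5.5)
Your proposal is correct and is essentially the paper's proof. Your $\nu$ is exactly the paper's $\mu_1$ (every factor except $\delta_{\rho\tilde d D_{N_1}}$), the paper's $F=\frac{1}{N_1}\bigl(h_0-h_{p^{-1}\tilde d}\bigr)$ is your Radon--Nikodym density for the choice $g=\mathbf{1}_{\{0\}}-\mathbf{1}_{\{p^{-1}\tilde d\}}$, and Case 2 is handled in the same way by deleting $\delta_{\rho^{d+1}mL'D_{L_1}}$. The bookkeeping you flag as the main obstacle is automatic: $Z(\hat\nu)=S_1\cup S_2\cup Z(\hat\omega)$ already contains every nonzero $\lambda\in\Lambda$ once $\Lambda$ misses $R_1\setminus(S_1\cup S_2)$, while $\lambda=0$ is killed by $\sum_a g(a)=0$.
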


\begin{proof}
By Proposition \ref{pro1.1}, it follows that $\rho^{-1}=p\in\mathbb{N}^*$ with $N\mid p$ and $L\mid p$. It therefore remains to show that $N\mid\frac{m}{\gcd(m,p^d)}$. We argue by contradiction. Assume, to the contrary, that $N\nmid\frac{m}{\gcd(m,p^d)}$.  Let $0\in\Lambda$ be a spectrum of $\mu_{\rho,D}$. A contradiction will be obtained by constructing a nonzero function
$F \in L^2(\mu_{\rho,D})$ satisfying
$\langle F,e_{\lambda} \rangle_{L^2(\mu_{\rho,D})}=0$ for all $e_{\lambda}\in \mathcal{E}_\Lambda$. 

\medskip
Recall that $R_1, S_1$ and $S_2$ are defined by \eqref{eq-R1}, \eqref{eq-S1} and \eqref{eq-S2} respectively.
Then by Proposition  \ref{empty-set}, we have
$$
\Lambda\cap (R_1 \setminus (S_1\cup S_2))=\emptyset \quad {\rm{or}} \quad\Lambda\cap (S_1 \setminus (R_1\cup S_2))=\emptyset.
$$
In view of the decomposition of $D$ in \eqref{D-S-Decompose}, we treat the two possibilities separately.

\medskip
\textbf{Case 1: $\Lambda\cap (R_1 \setminus (S_1\cup S_2))=\emptyset$.}

Note that the self-similar measure $\mu_{\rho,D}$ is supported on $T(p^{-1},D)$, which satisfies the equation
$$T(p^{-1},D)=p^{-1}D +\sum_{k \ge 2}p^{-k}D =: p^{-1}\tilde{d}D_{N_1}+ \tilde{T}_1$$
with $\tilde{T}_1:= p^{-1}D_{\tilde{d}}+p^{-1}mD_{L}+\sum_{k \ge 2}p^{-k}D$ from the decomposition $D=\tilde{d}D_{N_1}\oplus D_{\tilde{d}}\oplus mD_{L}$. The measure $\mu_{\rho,D}$  also admits the convolution representation

\begin{equation*}
	\mu_{\rho,D} = \delta_{p^{-1}\tilde{d}D_{N_1}} \ast \mu_1 = \frac{1}{N_1} \sum_{a \in p^{-1}\tilde{d}D_{N_1}} \delta_a \ast \mu_1,
\end{equation*}
where $\mu_1:=\delta_{p^{-1}D_{\tilde{d}}} \ast \delta_{p^{-1} mD_{L}} \ast (\ast^{\infty}_{j=2} \delta_{p^{-j}D}).$
For each $a \in p^{-1}\tilde{d}D_{N_1}$, we define the shifted probability measure $\nu_a = \delta_a \ast \mu_1$. By the convolution relation of $\mu_{\rho,D}$, we have the sum representation:
\begin{equation*}
	\mu_{\rho,D} = \frac{1}{N_1} \sum_{a \in p^{-1}\tilde{d}D_{N_1}} \nu_a.
\end{equation*}
Since all $\nu_a$ are positive measures, for any Borel set $E \subset \mathbb{R}$ and any $a\in p^{-1}\tilde{d}D_{N_1}$, it follows that $\mu_{\rho,D}(E) \ge \frac{1}{N_1} \nu_a(E)$. This implies that each measure $\nu_a$ is absolutely continuous with respect to the invariant measure $\mu_{\rho,D}$.
By the Radon-Nikodym theorem, for each $a\in p^{-1}\tilde{d}D_{N_1}$, there exists a density function $h_a = \frac{d\nu_a}{d\mu_{\rho,D}} \in L^1(\mu_{\rho,D})$ such that $0 \le h_a(x) \le N_1$ holds for $\mu_{\rho,D}$-almost everywhere on $ T(p^{-1},D)$.

The assumption $N\nmid\frac{m}{\gcd(m,p^d)}$ implies that $N_1>1$, hence
 $0$ and $p^{-1}\tilde{d}$ are both belong to $p^{-1}\tilde{d}D_{N_1}$. Consequently, their corresponding Radon-Nikodym derivatives $h_0(x)$ and $h_{p^{-1}\tilde{d}}(x)$ must exist.
Now, we construct a function $F(x)$ purely in a measure-theoretic sense as:
\begin{equation}\label{3.13}
	F(x) = \frac{1}{N_1} \Big( h_0(x) - h_{p^{-1}\tilde{d}}(x) \Big).
\end{equation}
Because $0 \le h_a(x) \le N_1$ almost everywhere for all $a\in p^{-1}\tilde{d}D_{N_1}$, we  have $|F(x)| \le 1$ for $\mu_{\rho,D}$-almost every $x$. Therefore, $\int |F(x)|^2 d\mu_{\rho,D}(x) \le 1 < \infty$, which  guarantees that $F \in L^2(\mu_{\rho,D})$. 
Next, we compute the inner product of $F$ with the exponential function $e_\lambda(x) = e^{2\pi i \lambda x},\lambda\in \Lambda$: 
\begin{equation*}
	\begin{aligned}
		\langle F, e_\lambda \rangle_{L^2(\mu_{\rho,D})} &= \int_{\mathbb{R}} F(x) e^{-2\pi i \lambda x} d\mu_{\rho,D}(x) \\
		&= \frac{1}{N_1} \int_{\mathbb{R}} e^{-2\pi i \lambda x} h_0(x) d\mu_{\rho,D}(x) - \frac{1}{N_1} \int_{\mathbb{R}} e^{-2\pi i \lambda x} h_{p^{-1}\tilde{d}}(x) d\mu_{\rho,D}(x) \\
		&= \frac{1}{N_1} \int_{\mathbb{R}} e^{-2\pi i \lambda x} d\nu_0(x) - \frac{1}{N_1} \int_{\mathbb{R}} e^{-2\pi i \lambda x} d\nu_{p^{-1}\tilde{d}}(x).
	\end{aligned}
\end{equation*}
The last equality uses  the definition of the Radon-Nikodym derivative, hence we can transition the integration from  $\mu_{\rho,D}$ to the shifted measures $\nu_a$.
Note that $\nu_0 = \mu_1$ and $\nu_{p^{-1}\tilde{d}} = \delta_{p^{-1}\tilde{d}} \ast \mu_1$.  Applying the translation property of the Fourier transform, we obtain
\begin{equation*}
	\begin{aligned}
		\langle F, e_\lambda \rangle_{L^2(\mu_{\rho,D})} &= \frac{1}{N_1} \widehat{\mu}_1(\lambda) - \frac{1}{N_1} e^{-2\pi i p^{-1}\tilde{d} \lambda} \widehat{\mu}_1(\lambda) \\
		&= \frac{1}{N_1} \left( 1 - e^{-2\pi i \frac{\tilde{d}}{p} \lambda} \right) \widehat{\mu}_1(\lambda).
	\end{aligned}
\end{equation*}
Recall that $\mu_1$ is defined via the infinite convolution:
\begin{equation*}
	\mu_1 = \delta_{p^{-1}D_{\tilde{d}}} \ast \delta_{p^{-1}mD_L} \ast \left( \mathop{\ast}_{k=2}^\infty \delta_{p^{-k}D} \right).
\end{equation*}
Taking the Fourier transform of both sides yields the infinite product expansion:
\begin{equation*}
	\widehat{\mu}_1(\lambda) = \widehat{\delta}_{D_{\tilde{d}}}\left(\frac{\lambda}{p}\right) \widehat{\delta}_{D_L}\left(\frac{m\lambda}{p}\right) \prod_{k = 2}^{\infty} \widehat{\delta}_D(p^{-k}\lambda).
\end{equation*}
Substituting this expansion into our expression for the inner product, we arrive at the formula:
\begin{equation*}
	\langle F, e_\lambda \rangle_{L^2(\mu_{\rho,D})} = \frac{1}{N_1} \left( 1 - e^{-2\pi i \frac{\tilde{d}}{p}\lambda} \right) \widehat{\delta}_{D_{\tilde{d}}}\left(\frac{\lambda}{p}\right) \widehat{\delta}_{D_L}\left(\frac{m\lambda}{p}\right) \prod_{k = 2}^{\infty} \widehat{\delta}_D(p^{-k}\lambda).
\end{equation*}
For any nonzero $\lambda \in \Lambda$, the condition $\Lambda\cap (R_1 \setminus (S_1\cup S_2))=\emptyset$ implies
$$\hat{\delta}_{D_{\tilde{d}}}(\frac{\lambda}{p}) \cdot\hat{\delta}_{D_L}(\frac{m\lambda}{p}) \cdot \prod_{k \ge 2}\hat{\delta}_D(p^{-k}\lambda)=0.$$
If $\lambda=0$, then $1- e^{-2 \pi i \frac{\tilde{d}}{p} \lambda}=0$.
Hence we have $\langle F,e_{\lambda} \rangle_{L^2(\mu_{\rho,D})}=0$ for each $e_{\lambda}\in \mathcal{E}_\Lambda$.

Finally, we show that the function $F \in L^2(\mu_{\rho,D})$ above constructed is non-zero. We argue by contradiction. Suppose that $F(x) = 0$ for $\mu_{\rho,D}$-almost every $x$. The definition of $F(x)$ in \eqref{3.13} forces 
\begin{equation*}
	h_0(x) = h_{p^{-1}\tilde{d}}(x), \quad \mu_{\rho,D}\text{-a.e.}
\end{equation*}
Consequently, for any Borel set $A \subset T(p^{-1},D)$, their integrals with respect to the measure $\mu_{\rho,D}$ must coincide:
\begin{equation*}
	\nu_0(A) = \int_A h_0(x) d\mu_{\rho,D}(x) = \int_A h_{p^{-1}\tilde{d}}(x) d\mu_{\rho,D}(x) = \nu_{p^{-1}\tilde{d}}(A).
\end{equation*}
This implies that $\nu_0 = \nu_{p^{-1}\tilde{d}}$. Recalling the definitions $\nu_0 = \mu_1$ and $\nu_{p^{-1}\tilde{d}} = \delta_{p^{-1}\tilde{d}} \ast \mu_1$, we obtain the following measure convolution equation:
\begin{equation*}
	\mu_1 = \delta_{p^{-1}\tilde{d}} \ast \mu_1.
\end{equation*}
Taking the Fourier transform on both sides of this equation, we get
\begin{equation*}
	\widehat{\mu}_1(\lambda) = e^{-2\pi i p^{-1}\tilde{d} \lambda} \widehat{\mu}_1(\lambda), \quad \forall \lambda \in \Lambda,
\end{equation*}
which can be rewritten as
\begin{equation} \label{eq:contradiction_fourier}
	\widehat{\mu}_1(\lambda) \left( 1 - e^{-2\pi i p^{-1}\tilde{d} \lambda} \right) = 0, \quad \forall \lambda \in \Lambda.
\end{equation}

Since the Fourier transform $\widehat{\mu}_1(\lambda)$ is uniformly continuous on $\mathbb{R}$ and satisfies $\widehat{\mu}_1(0) > 0$ at the origin, continuity implies that there exists a sufficiently small neighborhood around the origin, say $(-\epsilon, \epsilon)$ with $\epsilon > 0$, such that $\widehat{\mu}_1(\lambda) \neq 0$ for all $\lambda \in (-\epsilon, \epsilon)$.
Moreover, because $p^{-1}\tilde{d} \neq 0$, we can choose a nonzero $\lambda_0 \in (-\epsilon, \epsilon)$ small enough such that $p^{-1}\tilde{d} \lambda_0 \notin \mathbb{Z}$. For this specific $\lambda_0$, the exponential term satisfies
\begin{equation*}
	1 - e^{-2\pi i p^{-1}\tilde{d} \lambda_0} \neq 0.
\end{equation*}
Multiplying these two non-zero terms yields 
\begin{equation*}
	\widehat{\mu}_1(\lambda_0) \left( 1 - e^{-2\pi i p^{-1}\tilde{d} \lambda_0} \right) \neq 0.
\end{equation*}
This  contradicts with  \eqref{eq:contradiction_fourier}, which requires the product to be identically zero for all $\lambda \in \Lambda$.
Therefore, we conclude that $F(x)$ cannot be zero $\mu_{\rho,D}$-almost everywhere.

\medskip

\textbf{Case 2: $\Lambda\cap (S_1 \setminus (R_1\cup S_2))=\emptyset$.}

Similar to the discussion in the Case 1, the self-similar set $T(p^{-1},D)$ can be written as
$$T(p^{-1},D)=p^{-(d+1)}D+\sum_{k \ge 1,k \neq d+1}p^{-k}D=:p^{-(d+1)}mL'D_{L_1}+\tilde{T}_2,$$
where $\tilde{T}_2:=p^{-(d+1)}D_{N}+p^{-(d+1)}mD_{L'} +\sum_{k \ge 1,k \neq d+1}p^{-k}D $.
In this case,  $\mu_{\rho,D}$ can be expressed as 
$$\mu_{\rho,D}=\frac{1}{L_1} \sum_{\ell \in p^{-(d+1)}mL'D_{L_1}}\delta_{\ell} \ast \mu_2,$$
where $\mu_2:=\delta_{p^{-(d+1)}D_{N}} \ast \delta_{p^{-(d+1)}mD_{L'}} \ast (\ast^{\infty}_{j=1,j \neq d+1} \delta_{p^{-j}D})$ is supported on $\tilde{T}_2$.
 For each $\ell \in p^{-(d+1)}mL'D_{L_1}$, we define  $\varrho_{\ell} = \delta_{\ell} \ast \mu_2$. Using a similar argument, one can obtain that for each such $\ell$, there exists a density function $s_{\ell} = \frac{d\varrho_{\ell}}{d\mu_{\rho,D}} \in L^1(\mu_{\rho,D})$ such that $0 \le s_{\ell} \le L_1$ holds for $\mu_{\rho,D}$-almost every $x \in T(p^{-1},D)$. 
Since $L_1>1$, we can select the two Radon-Nikodym derivatives $s_{0}(x)$ and $s_{p^{-(d+1)}mL'}(x)$ and define the function $G(x)$ as
\begin{equation*}
	G(x) = \frac{1}{L_1} \Big( s_{0}(x) - s_{p^{-(d+1)}mL'}(x) \Big) \in L^2(\mu_{\rho,D}).
\end{equation*}
In the same manner, one can cheak that 
$\langle G,e_{\lambda} \rangle_{L^2(\mu_{\rho,D})}=0$ for any $e_{\lambda}\in \mathcal{E}_\Lambda$,
 and that $G$ is nonzero in $L^2(\mu_{\rho,D})$. 
 Hence we obtain a contradiction, which implies that $N\mid\frac{m}{\gcd(m,p^{d})}$.
\end{proof}

\subsection{The sufficiency.}

In this subsection, we prove the sufficiency of Theorem \ref{1.2}.
Under the assumption that $\rho^{-1}=p\in\mathbb{N}^*$ with $L$ dividing $p$, the numbers $L$, $m$, and $p$ admit prime factorizations as given in equations \eqref{3.5} and \eqref{3.6},  which satisfy  \eqref{3.7} and \eqref{eq3.1.0}. 
Together with the condition that $N$ divides $\frac{m}{\gcd(m,p^d)}$, this implies the following crucial observation.

\begin{proposition}\label{pro3.4}
Suppose that $L\mid p$,  $N\mid \frac{m}{\gcd(m,p^d)}$ and \eqref{3.5}-\eqref{eq3.1.0} hold. If $L_{i_0}\mid N$ for some $1\leq i_0\leq \kappa$, then $d_{i_0}=d=\max\{d_i : 1 \leq i \leq \kappa\}$.
\end{proposition}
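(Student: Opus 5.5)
We argue by comparing $L_{i_0}$-adic valuations. Write $v(\cdot)$ for the exponent of $L_{i_0}$ in an integer. By \eqref{3.6}, $v(m)=\tau_{i_0}$, $v(p)=l_{i_0}\ge \alpha_{i_0}\ge 1$ and $v(p^d)=d\,l_{i_0}$. Hence
\[
v\!\left(\frac{m}{\gcd(m,p^d)}\right)=\tau_{i_0}-\min\{\tau_{i_0},\,d\,l_{i_0}\}=\max\{0,\ \tau_{i_0}-d\,l_{i_0}\}.
\]
By hypothesis $L_{i_0}\mid N$ and $N\mid \frac{m}{\gcd(m,p^d)}$, so $L_{i_0}\mid \frac{m}{\gcd(m,p^d)}$. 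Therefore the valuation above is at least $1$, which gives
\[
\tau_{i_0}\ \ge\ d\,l_{i_0}+1 .
\]

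Next we use \eqref{3.7} for $i=i_0$, namely $\tau_{i_0}+\alpha_{i_0}-1=d_{i_0}l_{i_0}+r_{i_0}$ with $0\le r_{i_0}<l_{i_0}$. Combining this with the previous inequality and $\alpha_{i_0}\ge 1$ yields
\[
d_{i_0}l_{i_0}+r_{i_0}=\tau_{i_0}+\alpha_{i_0}-1\ \ge\ \tau_{i_0}\ \ge\ d\,l_{i_0}+1 .
\]
Since $r_{i_0}<l_{i_0}$, we get $d_{i_0}l_{i_0}>d\,l_{i_0}+1-l_{i_0}$, that is, $d_{i_0}>d-1+1/l_{i_0}$. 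Because $d_{i_0}$ is an integer, this forces $d_{i_0}\ge d$. (Equivalently, divide by $l_{i_0}$ and take floors: $d_{i_0}=\lfloor(\tau_{i_0}+\alpha_{i_0}-1)/l_{i_0}\rfloor\ge \lfloor (d\,l_{i_0}+1)/l_{i_0}\rfloor=d$.)

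The reverse inequality $d_{i_0}\le d$ holds by the definition \eqref{eq3.1.0} of $d$ as $\max_i d_i$. Hence $d_{i_0}=d$.

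This argument relies on the paper's claim that the definition \eqref{2.11} and the formulation \eqref{eq3.1.0} agree; we take \eqref{eq3.1.0} as the definition here. The only care needed is in the valuation computation of $\gcd(m,p^d)$, which must handle the case $\tau_{i_0}\le d\,l_{i_0}$ by taking the $\max$ with $0$. The hypothesis $L_{i_0}\mid N$ excludes exactly that case.
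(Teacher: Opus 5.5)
Your proof is correct and is essentially the paper's argument run in the direct (contrapositive) direction. The paper assumes $d_{i_0}<d$, derives $\tau_{i_0}<d\,l_{i_0}$ from \eqref{3.7}, and concludes $L_{i_0}\nmid \frac{m}{\gcd(m,p^d)}$; this is the same valuation comparison you make. (Minor: in your parenthetical, $\lfloor (d\,l_{i_0}+1)/l_{i_0}\rfloor$ equals $d$ only when $l_{i_0}\ge 2$, and equals $d+1$ when $l_{i_0}=1$, but $\ge d$ is all you use.)
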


\begin{proof}
Since $L\mid p$, it is clear that $l_i\geq\alpha_{i}\geq 1$ for all $1 \leq i \leq \kappa$. Suppose that there exists some $1\leq i_0\leq \kappa$ such that $L_{i_0}\mid N$ and $d_{i_0}<d$.
It follows from \eqref{3.7} and $0\leq r_{i_0}<l_{i_0}$ that
$$
\tau_{i_0}\leq \tau_{i_0}+\alpha_{i_0}-1=d_{i_0}l_{i_0}+r_{i_0}<dl_{i_0},
$$
		which implies that
		$$
		\gcd\left(L_{{i_0}},\frac{m}{\gcd\left(m,p^{d}\right)}\right)=1.
		$$
		This contradicts the hypotheses that $N\mid \frac{m}{\gcd(m,p^d)}$ and $L_{i_0}\mid N$. Hence, the proposition holds.
\end{proof}

\begin{theorem}\label{thm3.3}
	Let $\mu_{\rho,D}$ be a self-similar measure defined by \eqref{1.1}, where $0<\rho<1$ and $D$ is given by \eqref{eq1.3}.  Suppose that $\rho^{-1}=p\in\mathbb{N}^*$ with $N,L\mid p$ and $N\mid \frac{m}{\gcd(m,p^d)}$, where $d \in \mathbb{N}$ is defined as \eqref{eq3.1.0}. Then $\mu_{\rho,D}$ is a spectral measure.
\end{theorem}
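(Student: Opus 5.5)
The plan is to pass from $\mu_{\rho,D}$ to a self-similar measure generated by a Hadamard triple with a larger expansion factor. Then Theorem \ref{2.12}, together with Proposition \ref{2.10}, produces an (integer) spectrum.

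\textbf{Step 1 (regrouping).} Put $R=p^{d+1}$ and
\[
B=D\oplus pD\oplus\cdots\oplus p^{d}D .
\]
Regrouping \eqref{2.1} in blocks of $d+1$ consecutive factors gives $\mu_{p^{-1},D}=\mu_{R^{-1},B}$. The sum is direct because $N,L\mid p$ and $NL\mid p$ (Remark~\ref{rem3.1}), so $D$ lies in a set of $p$ distinct residues arranged compatibly with base $p$; this should be checked explicitly. The zero set of the mask is
\[
Z(m_B)=\bigcup_{j=0}^{d}p^{-j}\Big(\tfrac{\mathbb{Z}\setminus N\mathbb{Z}}{N}\cup\tfrac{\mathbb{Z}\setminus L\mathbb{Z}}{mL}\Big).
\]
By Lemma \ref{lem2-6}, it suffices to find $C\subset\mathbb{Z}$ with $\#C=(NL)^{d+1}$ and $R^{-1}(c-c')\in Z(m_B)$ for all $c\neq c'$ in $C$.

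\textbf{Step 2 (constructing $C$).} I would take $C$ as a direct sum of $2(d+1)$ blocks, one for each factor of $B$. For the $N$-factor at level $j$, use the block $\frac{p^{d+1-j}}{N}D_N$, which is integral since $N\mid p$. For the $L$-factor, I would write $m=\sigma\tilde m$ with $\sigma=\gcd(m,p^{d})$ and $p^{d}=\sigma\tilde p$. The candidate block is then $\frac{p^{d+1-j}}{mL}\,c_jD_L$, with $c_j$ chosen (prime by prime over the $L_i$, via CRT and the exponent bookkeeping of \eqref{3.7}) to make the block integral while keeping every nonzero multiple off $L\mathbb{Z}$ after rescaling. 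The hypothesis $N\mid \tilde m$ and Proposition~\ref{pro3.4} are what should make this possible. Proposition~\ref{pro3.4} says that whenever some prime $L_{i}$ divides $N$, it attains the maximal exponent level $d$. This prevents the $N$-blocks and the $L$-blocks from colliding modulo the relevant prime powers, which is exactly the obstruction exploited in Proposition \ref{empty-set}.

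\textbf{Step 3 (checking differences).} For $c\neq c'$ in $C$, I would look at the lowest level and the first block in which the two digit sequences differ. I would then show that $R^{-1}(c-c')$ lies in the zero set of the corresponding factor of $m_B$. This is a $p$-adic (more precisely $L_i$-adic and $N$-adic) valuation argument. It parallels Cases~1--2 in the proof of Proposition \ref{empty-set}, run in the opposite direction, with the divisibility $N\mid\tilde m$ now supplying the membership that failed there.

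\textbf{Step 4 (completeness).} Since $0,1\in D_N\subset B$, we have $\gcd(B-B)=1$. Proposition \ref{2.10}, applied with the constant sequence $(R,B)$, then gives $\mathcal{Z}(\mu_{R^{-1},B})=\emptyset$. Theorem \ref{2.12} therefore gives an integer spectrum, and this spectrum also serves for $\mu_{\rho,D}$.

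The main obstacle is Step 2, together with its verification in Step 3: choosing the $L$-blocks so that all of $C$ consists of integers, the sum is direct, and mixed differences land in the zero set. This is where the precise definition of $d$ enters. If a single block structure fails, I would first try placing all the $L$-type digits at the top level $j=0$, scaled by $p^{d}/\tilde p$-type factors. Failing that, I would increase the block length to a multiple of $d+1$.
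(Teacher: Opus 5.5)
The gap is not in the bookkeeping of Step 2. What Steps 1--4 aim for cannot be achieved in general. Suppose some regrouping $(p^k,\bigoplus_{j<k}p^jD)$ admitted an integer $C$ forming a Hadamard triple. Then your Step 4 (Proposition \ref{2.10} plus Theorem \ref{2.12}) would give $\mu_{\rho,D}$ itself a spectrum inside $\mathbb{Z}$. When $d\ge1$ such spectra usually do not exist.

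Take Example \ref{examp1.2}: $p=4$, $D=\{0,1,8,9\}=D_2\oplus 8D_2$, $d=1$, and the hypotheses hold. Here $\mu_{1/4,D}$ is normalized Lebesgue measure on $[0,1]\cup[2,3]$. Every spectrum therefore has density $2$ and cannot lie in $\mathbb{Z}$. Concretely, $B=D\oplus 4D$ contains $0$ and $32\equiv 0\pmod{16}$. Since $H$ is square, unitarity forces $\sum_{c\in C}e^{2\pi i(b-b')c/R}=0$ for $b\neq b'$, and this fails for $b-b'=32$. More generally, whenever $p\mid m$ the set $\bigoplus_{j<k}p^jD$ contains $0$ and $p^{k-1}m\equiv 0\pmod{p^k}$, so increasing the block length cannot help.

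This is not only a $p\mid m$ phenomenon. Take $N=2$, $L=3$, $m=12$, $p=30$; here $d=1$ and $N\mid m/\gcd(m,p)=2$. All integer zeros of $\hat\mu_{\rho,D}$ lie in $5\mathbb{Z}$. For $\lambda\in 5\mathbb{Z}$ the first-level factor $m_{D_3}(12(x+\lambda)/30)=m_{D_3}(2x/5)$ does not depend on $\lambda$. Hence $Q_{\mu,\Lambda}(5/6)=0$ for every orthogonal set $\Lambda\subset\mathbb{Z}$ with $0\in\Lambda$. Separately, your directness argument in Step 1 is incorrect as stated: $\{0,1,8,9\}$ is not a set of distinct residues modulo $4$.

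The missing idea has two parts. The $L$-digits must be shifted $d$ levels relative to the $N$-digits rather than kept aligned with them. And the spectrum must contain a non-integer finite part.

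The paper first rescales by $\tilde p=p^d/\gcd(m,p^d)$ (Theorem \ref{2.7}). It then writes $\mu_{p^{-1},\tilde pD}=\delta_{\mathcal{D}_1}*\mu_{p^{-1},\mathcal{D}_2}$. Here $\mathcal{D}_1=\tilde m(D_L\oplus pD_L\oplus\cdots\oplus p^{d-1}D_L)$ collects the first $d$ $L$-factors. The set $\mathcal{D}_2=\tilde mD_L\oplus\tilde pD_N$ pairs the $L$-factor at level $j+d$ with the $N$-factor at level $j$.

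The hypothesis $N\mid\tilde m$, together with the definition of $d$, makes $(p,\mathcal{D}_2)$ admissible. Since $\gcd(\tilde m,\tilde p)=1$, we get $\mathcal{Z}(\mu_{p^{-1},\mathcal{D}_2})=\emptyset$, hence an integer spectrum $\Lambda_2$. The finite factor has the non-integer spectrum $\Lambda_1=\{\sum_{k<d}\eta_k/(\tilde m p^kL):\eta_k\in D_L\}$. Because $\mathcal{D}_1\subset\mathbb{Z}$, $\hat\delta_{\mathcal{D}_1}$ is $1$-periodic, and Theorem \ref{2.15} then shows $\Lambda_1\oplus\Lambda_2$ is a spectrum.

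Your fallback of moving the $L$-digits to the top level points in this direction. However, any version that ends by applying Theorem \ref{2.12} to a regrouping of $\mu_{\rho,D}$ itself will hit the obstruction above.
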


\begin{proof}
Recall the factorizations of $L$, $m$ and $p$ in \eqref{3.5} and \eqref{3.6},  which satisfy  \eqref{3.7} and \eqref{eq3.1.0}. 
Consider the factorization $N = L_{1}^{s_{1}}L_{2}^{s_{2}}\cdots L_{\kappa}^{s_{\kappa}}N'$, where $s_i\geq 0$ and $\gcd(L_i, N')=1$ for $1\leq i\leq \kappa$. Without loss of generality, after relabeling, we may arrange that $s_i = 0$ for $1 \leq i \leq b$ and $s_i \geq 1$ for $b+1 \leq i \leq \kappa$. In other words,
\begin{equation}\label{3.12}
N= L_{b+1}^{s_{b+1}}\cdots L_{\kappa}^{s_{\kappa}}N'.
\end{equation}
Write $\tilde{r}_i:=\tau_i-dl_i$ for $1\leq i\leq \kappa$. For $1\leq i\leq b$, by $d_i\leq d$ and \eqref{3.7}, we have
	\begin{equation}\label{eq3.9.0}
		\tilde{r}_i\leq r_i-\alpha_i+1.
	\end{equation}
For $b+1\leq i\leq\kappa$, it follows from  Proposition \ref{pro3.4} and \eqref{3.7} that
	$$ \tau_i+\alpha_i-1=d_il_i+r_i=dl_i+\tilde{r}_i+\alpha_i-1.
	$$
	Combining with $N\mid\frac{m}{\gcd(m,p^d)}$, one may obtain that
	\begin{equation}\label{eq3.8.0}
		s_i\leq \tilde{r}_i\leq \tilde{r}_i+\alpha_i-1= r_i<l_i, \quad b+1\leq i\leq \kappa.
	\end{equation}

Let $\frac{m}{p^d}=\frac{\tilde{m}}{\tilde{p}}$ with $\gcd(\tilde{m}, \tilde{p})=1$. Then, \eqref{eq3.9.0} and \eqref{eq3.8.0}, together with $N\mid\frac{m}{\gcd(m,p^d)}$, imply that there exists some integer $m_0$ with $\gcd(m_0,L)=1$ such that
$$\tilde{m}=L_1^{\gamma_1}\cdots L_b^{\gamma_b}  L_{b+1}^{\tilde{r}_{b+1}}\cdots L_{\kappa}^{\tilde{r}_\kappa}N'm_0=:\bar{L}N'm_0,$$
where $\gamma_i\in\{0,\tilde{r}_i\}$ for $1\leq i\leq b$. Here,
	$\bar{L}:=L_1^{\gamma_1}\cdots L_b^{\gamma_b} L_{b+1}^{\tilde{r}_{b+1}}\cdots L_{\kappa}^{\tilde{r}_\kappa}$
	satisfies $\bar{L}L\mid p$ since $\gamma_i+\alpha_i\leq l_i$ for $1\leq i\leq b$ and $\tilde{r}_{i}+\alpha_i\leq l_i$ for $b+1\leq i\leq \kappa$. Moreover, $N\mid\frac{m}{\gcd(m,p^d)}$ also implies that $\gcd(\tilde{p},N)=1$.

	By Theorem \ref{2.7}, it suffices to show that $\mu_{p^{-1}, \tilde{p}D}$ is a spectral measure. In the following, we construct a spectrum for $\mu_{p^{-1}, \tilde{p}D}$.  In the case $d\geq 1$, observe that $\mu_{p^{-1}, \tilde{p}D}$ can be  decomposed as
	\begin{align*}
		\mu_{p^{-1}, \tilde{p}D}
		&= \mu_{p^{-1}, m\tilde{p}D_L} * \mu_{p^{-1}, \tilde{p}D_N} \\
		&= \Bigl( \delta_{p^{-1} m\tilde{p}D_L} * \cdots * \delta_{p^{-d} m\tilde{p}D_L} \Bigr)
		* \Bigl( *_{j \ge 1} \delta_{p^{-j}(p^{-d}m\tilde{p}D_L \oplus \tilde{p}D_N)} \Bigr) \\
		&=: \delta_{\mathcal{D}_1} * \mu_{p^{-1},\mathcal{D}_2},
	\end{align*}
	where $\mathcal {D}_1=p^{-1}m\tilde{p}D_L\oplus \cdots\oplus p^{-d}m\tilde{p}D_L$ and $\mathcal {D}_2=p^{-d}m\tilde{p}D_{L}\oplus\tilde{p}D_N$.
The spectra of $\delta_{\mathcal {D}_1}$ and $\mu_{p^{-1},\mathcal {D}_2}$ are constructed separately below.
	
	Let
	$$
	\Lambda_1=\left\{\sum^{d-1}_{k=0} \frac{\eta_k}{\tilde{m} p^k L} \mid \eta_k \in \left\{0,1,\cdots, L-1\right\} \right\},
	$$
	then a direct verification shows that $(\Lambda_1-\Lambda_1)\setminus\{0\}\subset Z(\hat{\delta}_{\mathcal {D}_1})$ and $\#\Lambda_1=L^d$. Hence $\Lambda_1$ is a spectrum of  $\delta_{\mathcal {D}_1}$.
	
	We then verify that the measure $\mu_{p^{-1},\mathcal {D}_2}$ admits an integer spectrum. Note that $$\mathcal{D}_2=\tilde{m}D_{L}\oplus\tilde{p}D_N=\bar{L}N'm_0D_{L}\oplus\tilde{p}D_N$$
 with $\gcd(m_0,L)=\gcd(\tilde{p},N)=1$. Define
	$$
	\mathcal {L}:=\frac{1}{\bar{L}N'}\left\{0,\frac{1}{L},\cdots, \frac{L-1}{L}\right\}\oplus\left\{0,\frac{1}{N},\cdots,\frac{N-1}{N}\right\}.
	$$
	A simple calculation yields that $(\mathcal {L}-\mathcal {L})\setminus\{0\}\subset Z(m_{\mathcal {D}_2})$   by using \eqref{eq3.8.0}. Moreover, it follows from $N\mid p$, $\bar{L}L\mid p$ and $\gcd(N',L)=1$ that $p\mathcal {L}\subset \mathbb{Z}$. Thus, $(p, \mathcal {D}_2)$ is an admissible pair by Lemma \ref{lem2-6}, and consequently $\mu_{p^{-1},\mathcal {D}_2}$ is a spectral measure \cite{Dut-Hau-Lai2019}. Since
	$\gcd(\tilde{p},\tilde{m})=1$,
	we have $\gcd(\mathcal {D}_2 - \mathcal {D}_2)=1$, which shows that $\mathcal{Z}(\mu_{p^{-1},\mathcal {D}_2})=\emptyset$ by Proposition \ref{2.10}. Therefore, Theorem \ref{2.12} guarantees that
	$\mu_{p^{-1},\mathcal {D}_2}$ is a spectral measure possessing a spectrum $\Lambda_2\subset \mathbb{Z}$.
	
	Notice that $\hat{\delta}_{\mathcal {D}_1}$ is integer-periodic since
	$\mathcal {D}_1=\tilde{m}(D_{L}\oplus pD_{L}\oplus\cdots\oplus p^{d-1} D_{L})$ is an integer digit set. Now we set
	$$
	\Lambda=\Lambda_1 \oplus\Lambda_2.
	$$
	Recall that $\mu_{p^{-1}, \tilde{p}D}=\delta_{\mathcal {D}_1} \ast \mu_{p^{-1},\mathcal {D}_2}$. Then it follows that for any $t\in\mathbb{R}$,
	$$
	Q_{\Lambda}(t)=\sum_{\eta \in \Lambda} \vert \hat{\mu}_{p^{-1}, \tilde{p}D}(t+\eta) \vert ^2 = \sum_{\eta \in \Lambda} \vert \hat{\delta}_{\mathcal {D}_1}(t+\eta) \vert ^2 \cdot \vert \hat{\mu}_{p^{-1},\mathcal {D}_2}(t+\eta) \vert ^2.
	$$
	Combining $\Lambda_2\subset\mathbb{Z}$, the integral periodicity of $\hat{\delta}_{\mathcal {D}_1}$ and  Theorem \ref{2.15}, one may obtain that
	\begin{align*}
		Q_{\Lambda}(t)=& \sum_{\gamma \in \Lambda_1}\sum_{\lambda \in \Lambda_2} \vert \hat{\delta}_{\mathcal {D}_1}(t+\gamma+\lambda) \vert ^2 \cdot \vert \hat{\mu}_{p^{-1},\mathcal {D}_2}(t+\gamma+\lambda) \vert ^2  \notag \\
		=& \sum_{\gamma \in \Lambda_1} \vert \hat{\delta}_{\mathcal {D}_1}(t+\gamma) \vert ^2 \cdot \sum_{\lambda \in \Lambda_2} \vert \hat{\mu}_{p^{-1},\mathcal {D}_2}(t+\gamma+\lambda) \vert ^2  \notag \\
		=& \sum_{\gamma \in \Lambda_1} \vert \hat{\delta}_{\mathcal {D}_1}(t+\gamma) \vert ^2 \equiv 1
	\end{align*}
	holds for any $t \in \mathbb{R}$. Applying Theorem \ref{2.15} again, we conclude that
	$\Lambda$ is a spectrum of $\mu_{p^{-1}, \tilde{p}D}$.
In the remaining case $d=0$, $\Lambda_2$ is a spectrum of $\mu_{p^{-1}, \tilde{p}D}$.
The proof is completed.
	
\end{proof}

\begin{remark}\label{rem3.1}
	{\rm 
In the end of this section, we remark that the conditions $N\mid p$, $L\mid p$ and $N\mid \frac{m}{\gcd(m,p^d)}$ imply $NL\mid p$. In fact, using the factorizations of $L,m,p$ and $N$ in \eqref{3.5}-\eqref{eq3.1.0} and \eqref{3.12}, the conditions $N\mid p$ and $L\mid p$ tell us that $L_1^{\alpha_1} \cdots L_b^{\alpha_b}N'\mid p$. 
Moreover, \eqref{eq3.8.0} yields $s_i + \alpha_i \leq l_i$  for all  $b+1 \leq i \leq \kappa$, which ensures $L_{b+1}^{\alpha_{b+1}+s_{b+1}} \cdots L_{\kappa}^{\alpha_{\kappa}+s_{\kappa}} \mid p$. Hence we have $NL \mid p$.
}
\end{remark}

\section{Spectrality of self-similar tiles}\label{sec4}
In Section~\ref{sec3}, we have obtained a complete characterization of the spectrality for self-similar measures $\mu_{\rho,D}$ associated with product-form digit sets $D$ defined by \eqref{eq1.3}. By Remark \ref{rem3.1}, we know that $p=\rho^{-1}\ge \#D$ if $\mu_{\rho,D}$ is a spectral measure. 
In this section, we turn to the tiling properties of the associated self-similar sets $T(\rho, D)$, with the focus on the case 
$$\rho^{-1} = \#D = NL$$
 and on digit sets of the form $D = D_N \oplus m D_L$, where $N, m, L \in \mathbb{N}^*$ with $N, L \ge 2$.  For our study, we recall a fundamental tiling criterion for self-similar tiles. This characterization was first established in one dimension by Kenyon~\cite{Ken1992} and later extended to higher dimensions by Lagarias and Wang~\cite{Lag1997}.

\begin{theorem}[\cite{Lag1997}] \label{2.8}
	For an expanding matrix $R \in M_n(\mathbb{Z})$ and a finite digit set $B \subset \mathbb{Z}^n$ with $\vert \det(R) \vert = \# B$, the following three statements are equivalent:
	\begin{enumerate}[\rm(i)]
		\item $T(R,B)$ has positive Lebesgue measure;
		\item For each $k \ge 1$, the set $\sum^{k-1}_{i=0}R^{i}B$ contains $(\#B)^k$ distinct elements;
		\item For each $\nu \in \mathbb{Z}^n \backslash \{\bf{0}\}$, there exists an integer $k\ge 1$ such that $m_B((R^{\ast})^{-k}\nu)=0.$
	\end{enumerate}	
\end{theorem}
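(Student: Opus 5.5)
The statement is the Kenyon--Lagarias--Wang criterion, and I would prove it through the chain (i)$\Leftrightarrow$(ii)$\Leftrightarrow$(iii). Throughout write $N:=\#B=|\det R|$, $B_k:=\sum_{i=0}^{k-1}R^iB$ (as a multiset, of total multiplicity $N^k$) and $T=T(R,B)=\sum_{j\ge1}R^{-j}B$. Iterating the set equation gives the key identity
\[
T=R^{-k}(B_k+T).
\]

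\textbf{(i)$\Rightarrow$(ii).} Taking Lebesgue measure in the identity gives
\[
|T|=N^{-k}\,|B_k+T|\le N^{-k}\,\#\{\text{distinct elements of }B_k\}\,|T|.
\]
If $B_k$ had a repetition, the number of distinct elements would be $<N^k$. This forces $|T|<|T|$ unless $|T|=0$, contradicting (i).

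\textbf{(ii)$\Rightarrow$(i).} Let $Q=[0,1)^n$. If the $N^k$ points of $B_k\subset\mathbb{Z}^n$ are distinct, the translates $b+Q$ ($b\in B_k$) are disjoint, so
\[
\bigl|R^{-k}(B_k+Q)\bigr|=N^{-k}N^k=1 .
\]
Since $R$ is expanding, $\operatorname{diam}(R^{-k}Q)\to0$ and $R^{-k}B_k$ lies within $\varepsilon_k\to0$ of $T$. Hence the $\varepsilon_k'$-neighbourhood of $T$ has measure $\ge1$ for every $k$. Letting $k\to\infty$ and using compactness of $T$ gives $|T|\ge1>0$.

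\textbf{Fourier reformulation of (ii).} Set
\[
f_k(\xi):=\prod_{j=1}^{k}m_B(R^{*-j}\xi),
\]
which is the Fourier transform of the uniform probability on $R^{-k}B_k$. Because $B\subset\mathbb{Z}^n$, $f_k$ is $R^{*k}\mathbb{Z}^n$-periodic on $\mathbb{Z}^n$. Parseval on the finite group $\mathbb{Z}^n/R^{*k}\mathbb{Z}^n$ (of order $N^k$) gives
\[
\sum_{\nu\in\mathbb{Z}^n/R^{*k}\mathbb{Z}^n}|f_k(\nu)|^2=N^{-k}\,\#\{(b,b')\in B_k^2:\ b\equiv b' \bmod R^k\mathbb{Z}^n\}.
\]
The right side equals $1$ exactly when the elements of $B_k$ are distinct modulo $R^k\mathbb{Z}^n$. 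Since $f_k(0)=1$, this happens iff $f_k(\nu)=0$ for all $\nu\in\mathbb{Z}^n\setminus R^{*k}\mathbb{Z}^n$. A short induction, using $B_{k+1}=B+RB_k$ and comparing residues mod $R$ first, shows that ``$B_k$ distinct for all $k$'' is equivalent to ``$B_k$ distinct mod $R^k\mathbb{Z}^n$ for all $k$''. So (ii) is equivalent to
\[
\text{(ii}'\text{)}\qquad f_k(\nu)=0\ \text{ for all } k \text{ and all }\nu\in\mathbb{Z}^n\setminus R^{*k}\mathbb{Z}^n .
\]

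\textbf{(ii)$\Rightarrow$(iii).} Given $\nu\ne0$, expansiveness of $R^*$ gives $\nu\notin R^{*k}\mathbb{Z}^n$ for $k$ large. Then (ii$'$) yields $f_k(\nu)=0$, so some factor $m_B(R^{*-j}\nu)$ with $j\le k$ vanishes.

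\textbf{(iii)$\Rightarrow$(ii) (main obstacle).} Here I argue by contraposition. Suppose $B_{k_0}$ has a repetition. Then $B_K$ has one for every $K\ge k_0$, because $B_{K}\supset B_{k_0}+R^{k_0}b$. By (ii$'$), for each such $K$ there is $\nu_K\in\mathbb{Z}^n\setminus R^{*K}\mathbb{Z}^n$ with $f_K(\nu_K)\ne0$. The difficulty is that $\nu_K$ is only determined modulo $R^{*K}\mathbb{Z}^n$, while (iii) concerns all factors $j\ge1$ of a single fixed integer vector.

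To handle this, I would choose representatives in a fixed bounded region and use the contracting dynamics. Fix the finite set $\Gamma$ of integer points in a sufficiently large $R^*$-adapted ball. Each $\nu_K$ is reduced step by step via the maps
\[
\xi\mapsto R^{*-1}(\xi+\gamma),\qquad \gamma\in\Gamma ,
\]
which keep it in $\Gamma$, along a path on which $m_B$ is nonzero. This follows Kenyon's and Cohen's cycle arguments.

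Since $\Gamma$ is finite, a pigeonhole or K\"onig-lemma argument as $K\to\infty$ gives an infinite path, hence a nonzero cycle $\nu\in\mathbb{Z}^n$ in $\Gamma$, along which no factor $m_B(R^{*-j}\nu)$ vanishes for any $j\ge1$. This contradicts (iii). Making this reduction precise, in particular ensuring the limiting vector is nonzero and genuinely integral, is where I expect the real work to be.
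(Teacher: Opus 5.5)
Your arguments for (i)$\Rightarrow$(ii) and (ii)$\Rightarrow$(i) are fine, and your Parseval computation is correct, but it characterizes something stronger than (ii). Already at $k=1$, condition (ii$'$) says $m_B(R^{*-1}\nu)=0$ for every $\nu\in\mathbb{Z}^n\setminus R^{*}\mathbb{Z}^n$. That is, $B$ must be a complete residue system modulo $R\mathbb{Z}^n$ (a standard digit set), and distinctness of the $B_k$ in $\mathbb{Z}^n$ does not force this. The paper's own Example~\ref{examp1.2} is a counterexample. For $R=4$, $B=\{0,1,8,9\}$ one has $T=[0,1]\cup[2,3]$, so (i) and (ii) hold, while $f_1(1)=m_B(1/4)=\frac{1+i}{2}\neq0$. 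So the induction you allude to cannot exist: only (ii$'$)$\Rightarrow$(ii) is true, and your proof of (ii)$\Rightarrow$(iii) has no valid input.

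The same example shows that your contrapositive for (iii)$\Rightarrow$(ii) cannot be completed as set up. From a repetition in $B_{k_0}$ you keep only the failure of (ii$'$): for each $K$ some $\nu_K\notin R^{*K}\mathbb{Z}^n$ with $f_K(\nu_K)\neq0$. That data is present for $R=4$, $B=\{0,1,8,9\}$: take $\nu_K=4^{K-1}$, so $f_K(\nu_K)=m_B(1/4)\neq0$. Yet (iii) holds there. Indeed $m_B(\xi)=\frac14(1+e^{2\pi i\xi})(1+e^{16\pi i\xi})$, and $\nu=2^ju$ with $u$ odd is annihilated at $k=(j+1)/2$ if $j$ is odd and at $k=(j+4)/2$ if $j$ is even. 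Hence any cycle/K\"onig argument fed only with the $\nu_K$ must degenerate to $\nu=0$, the obstruction you anticipated. This is a missing idea, not a technicality: you never use that the coincidence in $B_{k_0}$ is an integer coincidence rather than a modular one.

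The standard way around this is to prove (i)$\Leftrightarrow$(iii) directly, with $\mu:=\mu_{R^{-1},B}$.

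For (i)$\Rightarrow$(iii): if $|T|>0$, then $|T|\le\sum_{b}|R^{-1}(T+b)|=|T|$, so the pieces are pairwise measure-disjoint. Hence $\mu$ is normalized Lebesgue measure on $T$, and $\hat\mu(\xi)\to0$ by Riemann--Lebesgue. Since $B\subset\mathbb{Z}^n$, $\hat\mu(R^{*}\nu)=m_B(\nu)\hat\mu(\nu)=\hat\mu(\nu)$ for $\nu\in\mathbb{Z}^n$. So $\hat\mu(\nu)=\lim_k\hat\mu(R^{*k}\nu)=0$, and because $m_B(R^{*-j}\nu)\to1$ geometrically, some factor must vanish.

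For (iii)$\Rightarrow$(i): (iii) gives $\hat\mu(\nu)=0$ for all $\nu\in\mathbb{Z}^n\setminus\{0\}$, so the projection of $\mu$ to $\mathbb{R}^n/\mathbb{Z}^n$ is Haar measure. This is impossible if $|T|=0$, since the image of $T$ would then be Haar-null yet carry full mass.

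Combined with your (i)$\Leftrightarrow$(ii), this gives the theorem. The paper itself only quotes the result from Kenyon and Lagarias--Wang, without proof.
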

For convenience, we summarize the decomposition process for $L$, $m$, $N$ and $p=\rho^{-1}$ below.
Let $L = L_1^{\alpha_1} L_2^{\alpha_2} \cdots L_\kappa^{\alpha_\kappa}$ with $\alpha_i\geq 1$ for $1\leq i\leq\kappa$ be the prime factorization.  Write
\begin{equation}\label{4.6}
N=L_{b+1}^{s_{b+1}}\cdots L_{\kappa}^{s_{\kappa}}N'\quad {\rm{and}} \quad m = L_1^{\tau_1} L_2^{\tau_2} \cdots L_\kappa^{\tau_\kappa} m'
\end{equation}
for some $0\leq b<\kappa$, where $s_i\geq 1$, $\tau_i\geq0 $ and $\gcd(L_i,N')=\gcd(L_i,m')=1$ for each $i$. The condition $p=LN$ yields the factorization
\begin{equation}\label{4.7}
p = L_1^{l_1} L_2^{l_2} \cdots L_\kappa^{l_\kappa} N'
\end{equation}
with exponents satisfying
\begin{equation}\label{4.8}
l_i=\alpha_i~~~~(1\leq i\leq b) \quad {\rm{and}} \quad l_i=\alpha_i+s_i~~~~(b+1\leq i\leq \kappa).
\end{equation}
For the sake of clarity, we separate the proof of Theorem \ref{1.3} into two propositions (\ref{prop6.2} and \ref{cor6.1}), which collectively establish the theorem.

\begin{proposition}\label{prop6.2}
Let $T(\rho,D)$ be a self-similar set defined by \eqref{1.1}, where $\rho^{-1}=NL$ and $D$ is given by \eqref{eq1.3}.
	If $T(\rho,D)$ is a translation tile, then $N\mid\frac{m}{\gcd(m,(NL)^d)}$ with $d$ given by \eqref{eq3.1.0} and $T(\rho,D)$ is a spectral set.
\end{proposition}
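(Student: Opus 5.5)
Since $T(\rho,D)$ is a translation tile, it has positive Lebesgue measure. By Theorem~\ref{2.8} with $R=p=NL=\#D$ and $B=D\subset\mathbb{Z}$, statement (iii) then holds: for every nonzero integer $\nu$ there is $k\ge1$ with $m_D(p^{-k}\nu)=0$. Equivalently, every nonzero integer lies in $\bigcup_{k\ge1}p^kZ(m_D)$, where
\[
Z(m_D)=\frac{\mathbb{Z}\setminus N\mathbb{Z}}{N}\cup\frac{\mathbb{Z}\setminus L\mathbb{Z}}{mL}.
\]
Because $p=NL$, we already have $N\mid p$ and $L\mid p$. So, by Theorem~\ref{thm3.3}, the only thing left to prove is $N\mid \frac{m}{\gcd(m,p^d)}$. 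Once this holds, $\mu_{\rho,D}$ is spectral.

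For the spectral-set conclusion, note that when $p=\#D$ and $T$ has positive measure, the self-similar measure $\mu_{\rho,D}$ is normalized Lebesgue measure on $T$. This follows from the convolution structure (\ref{2.1}), since the $p^k$ points of $\sum_{i<k}p^iD$ are distinct by Theorem~\ref{2.8}(ii). Hence spectrality of $\mu_{\rho,D}$ is exactly spectrality of $T$.

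The core step is the divisibility, and I would prove it by contradiction using the factorizations (\ref{4.6})--(\ref{4.8}). Suppose $N\nmid \frac{m}{\gcd(m,p^d)}$. Then some prime power dividing $N$ fails to divide $m/\gcd(m,p^d)$. I would then exhibit an explicit nonzero integer $\nu$ that avoids every set $p^k\frac{\mathbb{Z}\setminus N\mathbb{Z}}{N}$ and every set $p^k\frac{\mathbb{Z}\setminus L\mathbb{Z}}{mL}$ for $k\ge1$.

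Membership in these sets is governed by valuations. We have $\nu\in p^k\frac{\mathbb{Z}\setminus N\mathbb{Z}}{N}$ iff $\nu N/p^k\in\mathbb{Z}\setminus N\mathbb{Z}$, and $\nu\in p^k\frac{\mathbb{Z}\setminus L\mathbb{Z}}{mL}$ iff $\nu mL/p^k\in \mathbb{Z}\setminus L\mathbb{Z}$.

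The natural candidate is $\nu=\frac{p^{d+1}}{N_1}\cdot(\text{something coprime})$, or more precisely $\nu = p^{d}\,\frac{m}{\gcd(m,p^d)}\cdot u$ adjusted by a factor of $N$. The idea mirrors Proposition~\ref{empty-set}: the zero sets $R_1$ and $S_1$ "collide", and this produces an integer with no zero in the tower.

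Concretely, I would split by primes.
\begin{itemize}
\item \emph{Prime $q\mid N'$ (coprime to $L$).} Here $p$ contains exactly $v_q(N)$ factors of $q$. For $\nu$ to avoid the $N$-sets, we need $v_q(\nu)$ to be a multiple of $v_q(N)$ at every level $k$; choose $\nu$ accordingly.
\item \emph{Prime $L_i$ with $i>b$.} Use (\ref{4.8}), namely $l_i=\alpha_i+s_i$, together with Proposition~\ref{pro3.4}-type reasoning. This relates $\tau_i$ to $dl_i$ and $s_i$, and shows that the failure of divisibility forces $\tau_i-dl_i<s_i$.
\end{itemize}
Choose $\nu$ as a suitable product of prime powers. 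The goal is that for every $k\le d$, $\nu mL/p^k\in L\mathbb{Z}$, so the $L$-zeros are missed. For $k\ge d+1$, the resulting number $\nu mL/p^k$ is either non-integral or divisible by $L$. Finally, $\nu N/p^k$ is either non-integral or divisible by $N$ for all $k$.

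The main obstacle is making this choice of $\nu$ work simultaneously for all $k$ and all primes. The maximum in $d=\max d_i$ interacts with the different $l_i$. I would first try $\nu=\frac{p^{d+1}}{N_1 L_1}\cdot c$ with $N_1,L_1$ as in (\ref{3.3}) and (\ref{D-S-Decompose}), and verify the valuations prime by prime. If that fails, I would instead argue more abstractly. Apply Theorem~\ref{2.8}(iii) to an integer lying in $R_1\cap S_1$-type configurations, and derive the same contradiction as in Cases 1--2 of Proposition~\ref{empty-set}.
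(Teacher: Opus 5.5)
\textbf{There is a genuine gap in the core step.} Your reduction is sound. Tiling gives $\mathbb{Z}\setminus\{0\}\subset\bigcup_{k\ge1}p^kZ(m_D)$ by Theorem~\ref{2.8}, and once $N\mid \tilde m$ (where $\tilde m:=m/\gcd(m,p^d)$) is known, Theorem~\ref{thm3.3} gives spectrality. Your remark that $\mu_{\rho,D}$ is normalized Lebesgue measure on $T$ also fills in a step the paper leaves implicit.

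The core step, however, is never carried out: you do not produce a nonzero integer outside the zero set when $N\nmid\tilde m$. Your first candidate does not work in general. Take $N=6$, $L=3$, $m=27$, $p=18$. Then $d=1$, $\gcd(m,p^d)=9$, $\tilde m=3$, $N_1=2$ and $L_1=3$, so $p^{d+1}c/(N_1L_1)=54c$. But $m_{D_6}(54c/18^2)=m_{D_6}(c/6)=0$ whenever $6\nmid c$, so $54c$ lies in $Z(\hat\mu_{\rho,D})$.

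Your fallback does not apply either. Proposition~\ref{empty-set} concerns a bi-zero set $\Lambda$ and produces no integer outside $Z(\hat\mu_{\rho,D})$. The prime-by-prime plan has two further problems. First, the failure of $N\mid\tilde m$ can come solely from a prime of $N'$ (e.g.\ $N=2$, $L=3$, $m=3$, $p=6$), which your $L_i$-bullet does not cover. Second, Proposition~\ref{pro3.4} assumes exactly the divisibility you are negating, so it is unavailable.

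\textbf{The missing idea is the right test integer}, $\tilde p=p^d/\gcd(m,p^d)$ (not $p^d\tilde m$), together with one observation that disposes of the $D_N$-zeros. Since $p/N=L$, every integer in $p^k\frac{\mathbb{Z}\setminus N\mathbb{Z}}{N}$ is a multiple of $L$. For the index $i$ with $d_i=d$, the relation $\tau_i+\alpha_i-1=dl_i+r_i$ gives $v_{L_i}(\tilde p)=\max(0,dl_i-\tau_i)\le\alpha_i-1$. Hence $L\nmid\tilde p$, and $\tilde p$ is not a $D_N$-type zero.

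Tiling therefore forces $\tilde p=p^ka/(mL)$ with $L\nmid a$, that is, $p^ka=L\tilde m\,p^d$. If $k\le d$, then $a\in L\mathbb{Z}$, which is impossible. So $k\ge d+1$ and $\tilde m=Np^{k-d-1}a$, giving $N\mid\tilde m$ directly. No contradiction argument or valuation bookkeeping over all primes is needed. In the example above, $\tilde p=2$ is exactly the integer that escapes the zero set.
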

\begin{proof}
Suppose that $T(\rho,D)$ is  a translation tile. By Theorem \ref{2.8}, we have
\begin{align}\label{4.4}
	\mathbb{Z} \setminus \{0\} \subset Z(\hat{\mu}_{\rho,D})=\bigcup^{\infty}_{k=1}(NL)^kZ(m_D).
\end{align}
Notice that
\begin{align*}
	 \mathbb{Z} \cap Z(\hat{\mu}_{\rho,D})=&~ \mathbb{Z} \cap \bigcup^{\infty}_{n=1} (NL)^n \left(\frac{\mathbb{Z} \setminus N\mathbb{Z}}{N} \cup \frac{\mathbb{Z} \setminus L\mathbb{Z}}{Lm}\right)   \\
	=&~ \mathbb{Z} \cap\left\{\left(\bigcup^{\infty}_{n=1} (NL)^n \frac{(L\mathbb{Z} \setminus NL\mathbb{Z})}{NL}\right) \cup  \left(\bigcup^{\infty}_{n=1}(NL)^n \frac{(\mathbb{Z} \setminus L\mathbb{Z})}{Lm}\right)\right\}   \\
	=&~ \left\{\mathbb{Z} \cap\left(\bigcup^{\infty}_{n=0} (NL)^n (L\mathbb{Z} \setminus NL\mathbb{Z})\right) \right\}\cup  \left\{\mathbb{Z} \cap \left(\bigcup^{\infty}_{n=1}(NL)^n \frac{(\mathbb{Z} \setminus L\mathbb{Z})}{Lm}\right)\right\}\\
	:=&~Z_1\cup Z_2.
\end{align*}
By \eqref{3.5}-\eqref{eq3.1.0} and \eqref{4.6}-\eqref{4.8}，
without loss of generality, we assume that $d=d_\kappa$. Combining  \eqref{3.7} and $l_{\kappa}=\alpha_\kappa+s_\kappa$, we have
\begin{align*}
	\tau_\kappa + \alpha_\kappa - 1 = d (\alpha_\kappa+s_\kappa) + r_\kappa \quad (0 \leq r_\kappa< \alpha_\kappa+s_\kappa).
\end{align*}

Let $\sigma = \gcd(m, (NL)^d)$. Then we can write $m = \sigma \tilde{m}$ and  $(NL)^d = \sigma \tilde{p}$ for some integers   $\tilde{m}$ and $\tilde{p}$.
Note that
$$d (\alpha_\kappa+s_\kappa)-\tau_\kappa=\alpha_\kappa - 1-r_\kappa\leq \alpha_\kappa - 1.$$
It follows that $L_\kappa^{\alpha_\kappa}\nmid \tilde{p}$ and consequently $\tilde{p}\notin Z_1$. By \eqref{4.4}, we then have $\tilde{p}\in Z_2$; that is, there exist an integer $n_0\geq 1$ and some $a\in \mathbb{Z} \setminus L\mathbb{Z}$ such that
 $(NL)^{n_0}\frac{a}{mL}=\tilde{p}$, hence
$$
(NL)^{n_0} a=mL\tilde{p}=(NL)^d L \tilde{m}.
$$
Since $a\in \mathbb{Z} \setminus L\mathbb{Z}$, the above equation shows that $n_0\geq d+1$. Hence, $N\mid\tilde{m}$, i.e.,
$N\mid\frac{m}{\gcd(m,(NL)^d)}$  and $T(\rho,D)$ is a spectral set by Theorem \ref{1.2}. This complete the proof.
\end{proof}

Using Theorem~\ref{1.2}, we can also obtain the converse that spectrality implies translational tiling for the specific class of self-similar sets under consideration.

\begin{proposition}\label{cor6.1}
Let $T(\rho,D)$ be a self-similar set defined by \eqref{1.1}, where $\rho^{-1}=NL$ and $D$ is given by \eqref{eq1.3}. If $T(\rho,D)$ is a spectral set, then $T(\rho,D)$ is a translation tile.
\end{proposition}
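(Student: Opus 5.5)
The plan is to reduce the statement to the tiling criterion of Theorem~\ref{2.8}, applied with $R=NL$ and $B=D$. Note that $D\subset\mathbb{Z}$ and $\#D=NL=|\det R|$, so the theorem applies. By definition, a spectral set has positive and finite Lebesgue measure. So if $T:=T(\rho,D)$ is a spectral set, then $\mathcal{L}(T)>0$, which is statement (i) of Theorem~\ref{2.8}. The classical result of Lagarias and Wang \cite{LW_1996}, recalled in the introduction, says that when $\rho^{-1}=\#D$ and $T$ has positive measure (equivalently nonempty interior, for such self-similar sets), $T$ is a self-similar tile, i.e.\ it tiles $\mathbb{R}$ by translations. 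This already gives the proposition. The only things to check are the standing hypotheses: $NL\ge 2$ is an integer expanding factor, $D$ is an integer digit set of the right cardinality, and the tiling statement of \cite{LW_1996} applies.

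I also want a self-contained route through Theorem~\ref{1.2}, to parallel Proposition~\ref{prop6.2}. It has three steps.
\begin{enumerate}[\rm(1)]
\item From $\mathcal{L}(T)>0$ and Theorem~\ref{2.8}(ii), the pieces $\rho(T+d)$, $d\in D$, overlap only in null sets. Hence $\mu_{\rho,D}$ coincides with normalized Lebesgue measure on $T$, because both satisfy the invariance equation \eqref{1.1}.
\item Spectrality of $T$ therefore means spectrality of $\mu_{\rho,D}$. Theorem~\ref{1.2} with $p=NL$ then gives $N\mid \frac{m}{\gcd(m,(NL)^d)}$.
\item Verify Theorem~\ref{2.8}(iii): every $\nu\in\mathbb{Z}\setminus\{0\}$ lies in $Z(\hat\mu_{\rho,D})=Z_1\cup Z_2$, where $Z_1,Z_2$ are as in the proof of Proposition~\ref{prop6.2}.
\end{enumerate}

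For step (3), write $\nu=(NL)^k\nu'$ with $NL\nmid \nu'$. If $L\mid \nu'$, then $\nu\in Z_1$ directly. Otherwise $L\nmid\nu'$, and I will aim to write
\[
\nu=(NL)^{n}\frac{a}{mL}, \qquad a\in\mathbb{Z}\setminus L\mathbb{Z},
\]
for a suitable $n$. This uses the factorizations \eqref{4.6}--\eqref{4.8} together with the inequalities \eqref{eq3.9.0} and \eqref{eq3.8.0}, where $s_i\le\tilde r_i<l_i$. Those inequalities show that $m$ absorbs exactly the right powers of each $L_i$ (and of $N'$) relative to $(NL)^{d+1}$.

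The main obstacle is this prime-by-prime bookkeeping in step (3). For primes $L_i\nmid N$, the exponent $\tau_i$ is only bounded through $d_i\le d$, so one must choose $n$ (probably $n=d+1+k$) so that the $L_i$-adic valuation of $a$ stays below $\alpha_i$ for at least one $i$. Since the first route already settles the proposition, I would present that argument and keep this verification only as a remark.
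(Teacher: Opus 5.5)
Your first route is correct, and it differs from the paper's. The paper's chain is:
\begin{itemize}
\item $T$ spectral $\Rightarrow$ $N\mid\frac{m}{\gcd(m,p^d)}$, by Theorem~\ref{1.2} with $p=NL$;
\item this condition $\Rightarrow$ $\mathbb{Z}\setminus\{0\}\subset Z(\hat\mu_{\rho,\tilde pD})$, after conjugating to $\tilde pD$ with $\tilde p=\prod_{i\le b}L_i^{d\alpha_i-\tau_i}$ and computing zero sets;
\item this inclusion $\Rightarrow$ positive measure, by Theorem~\ref{2.8};
\item positive measure $\Rightarrow$ tile.
\end{itemize}
That last step is exactly the Lagarias--Wang fact you invoke, since Theorem~\ref{2.8} by itself only gives positive measure. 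The first step, applying Theorem~\ref{1.2} to $\chi_T\,dx$, also tacitly needs $\mathcal{L}(T)>0$, so that normalized Lebesgue measure on $T$ equals $\mu_{\rho,D}$; your step (1) makes this explicit. So the paper starts from the same positive-measure information you use and comes back to it after a detour. Your argument is shorter and more general: it works for any digit set with $\#D=\rho^{-1}$, and it shows that the ``spectral $\Rightarrow$ tile'' half of Theorem~\ref{1.3} has no arithmetic content; the real work is in Proposition~\ref{prop6.2}. What the paper's route adds is a direct zero-set proof that $N\mid\frac{m}{\gcd(m,(NL)^d)}$ forces tiling. Combined with Proposition~\ref{prop6.2}, that gives an arithmetic characterization of tiles, but the proposition does not need it.

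Your second route is redundant: step (3) re-derives $\mathcal{L}(T)>0$, which step (1) already assumed. Its proposed bookkeeping also fails for $D$ itself. Take $N=2$, $L=6$, $m=8$, $p=12$, so $d=1$ and $T$ is a tile. Then $\nu=1$ requires $n=1$ (indeed $1=12\cdot 4/48$ with $6\nmid 4$), while your choice $n=d+1+k=2$ gives $a=1/3$. Meanwhile $\nu=3$ requires $n=2$, so the right $n$ depends on $\nu'$ and not only on $d$ and $k$. The paper avoids this by first passing to $\tilde pD$ (here $\tilde p=3$). There $\tilde pLm$ is $p^{d+1}$ times an integer prime to $L$, and $n=d+1+k$ works uniformly. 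Since you present only the first route as the proof, this does not affect correctness.
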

\begin{proof}
Write $p=\rho^{-1}=NL$. By Theorem \ref{1.2}, $T(\rho,D)$ is a spectral set implies that $N\mid\frac{m}{\gcd(m,p^d)}$  with $d$ given by \eqref{eq3.1.0}. By using Theorem $\ref{2.7}$, it suffices to show that $T(p^{-1},\tilde{p}D)$ is a translation tile if $N\mid\frac{m}{\gcd(m,p^d)}$, where
$$\tilde{p}:=L_1^{d\alpha_1-\tau_1}L_2^{d\alpha_2-\tau_2}\cdots L_b^{d\alpha_b-\tau_b}.$$

Binding together (i) and (iii) in Theorem \ref{2.8}, we need to show that
	\begin{align}\label{4.1}
		\mathbb{Z} \setminus \{0\} \subset Z(\hat{\mu}_{\rho,\tilde{p}D})=\bigcup^{\infty}_{k=1}p^kZ(m_{\tilde{p}D}).
	\end{align}
	Under the assumption that $N\mid\frac{m}{\gcd(m,p^d)}$, Proposition \ref{pro3.4} gives $d_i=d$ for all $b+1 \leq i\leq \kappa$. We further establish the following results:
	\begin{align}\label{4.3}
		\tau_i-d\alpha_i\leq 0~~~~(1\leq i\leq b)\quad {\rm{and}} \quad \tau_i-dl_i=s_i~~~~(b+1\leq i\leq \kappa).
	\end{align}
	For $1\leq i\leq b$, by using $l_i=\alpha_i$ and $d_i\leq d$, it follows that
	$$\tau_i + \alpha_i - 1 = d_i l_i + r_i\leq d\alpha_i+r_i, \quad 0\leq r_i<\alpha_i,$$
	and we thus have
	$\tau_i-d\alpha_i\leq r_i-(\alpha_i - 1)\leq 0$.
	For $b+1\leq i\leq \kappa$, the assumption $N\mid\frac{m}{\gcd(m,p^d)}$ together with  $d_i=d$ gives $$\tau_i-dl_i=r_i-(\alpha_i-1)\geq s_i,$$ i.e.,
	$r_i+1\geq \alpha_i+s_i=l_i$. Combining with $0\leq r_i< l_i$,  we must have $r_i+1=l_i$. This implies that
	$\tau_i-dl_i=r_i-(\alpha_i-1)=l_i-\alpha_i=s_i$. We complete the proof of \eqref{4.3}.

Recall that $m = L_1^{\tau_1} L_2^{\tau_2} \cdots L_\kappa^{\tau_\kappa} m'$ and $N=L_{b+1}^{s_{b+1}}\cdots L_{\kappa}^{s_{\kappa}}N'$. Let $m'=N'^tm''$ with $N'\nmid m''$ for some $t\in\mathbb{N}^*$, then it follows from $N\mid\frac{m}{\gcd(m,(NL)^d)}$ and $\gcd(L,N')=1$  that $t-d\geq 1$. Applying the results in \eqref{4.3} gives that
\begin{align*}
	\frac{p^d}{m}
	&= \frac{L_1^{d\alpha_1-\tau_1}\cdots L_b^{d\alpha_b-\tau_b}\,N'^d}
	{L_{b+1}^{\tau_{b+1}-dl_{b+1}}\cdots L_{\kappa}^{\tau_{\kappa}-dl_{\kappa}}\,m'} \\[0.4em]
	&= \frac{\tilde{p}}
	{L_{b+1}^{s_{b+1}}\cdots L_{\kappa}^{s_{\kappa}}\,N'^{\,t-d} m''} \\[0.4em]
	&= \frac{\tilde{p}}
	{N\,N'^{\,t-d-1} m''},
\end{align*}
	where $\gcd(N'^{t-d-1}m'',L)=1$. Combining with $\gcd(\tilde{p}, N)=1$, a direct calculation yields that
	\allowdisplaybreaks 
	\begin{align*}
		Z(\hat{\mu}_{\rho,\tilde{p}D})=& \bigcup^{\infty}_{n=1} p^n \left(\frac{\mathbb{Z} \setminus N\mathbb{Z}}{\tilde{p}N} \cup \frac{\mathbb{Z} \setminus L\mathbb{Z}}{\tilde{p}Lm}\right)   \\
		\supset & \left(\bigcup^{\infty}_{n=1} p^n \frac{\mathbb{Z} \setminus N\mathbb{Z}}{N}\right)\cup
		\left(\bigcup^{\infty}_{n=d+1} p^n \frac{\mathbb{Z} \setminus L\mathbb{Z}}{\tilde{p}Lm}\right)   \\
		=& \left(\bigcup^{\infty}_{n=1} p^n \frac{L\mathbb{Z} \setminus LN\mathbb{Z}}{LN}\right)\cup
		\left(\bigcup^{\infty}_{n=1} p^n \frac{\mathbb{Z} \setminus L\mathbb{Z}}{LNN'^{t-d-1}m''}\right)  \\
		\supset& \left(\bigcup^{\infty}_{n=1} p^n \frac{L\mathbb{Z} \setminus LN\mathbb{Z}}{LN}\right)\cup
		\left(\bigcup^{\infty}_{n=1} p^n \frac{\mathbb{Z} \setminus L\mathbb{Z}}{LN}\right)  \\
		=&\bigcup^{\infty}_{n=1}p^n\frac{\mathbb{Z} \setminus p\mathbb{Z}}{p}=\mathbb{Z} \setminus \{0\}.
	\end{align*}
	Consequently, \eqref{4.1} holds, and hence $T(p^{-1},\tilde{p}D)$ is a translation tile.
\end{proof}

Combining the  Propositions \ref{prop6.2} and \ref{cor6.1} leads to the equivalence between spectrality and translational tiling for the self-similar set $T(\rho, D)$. Consequently, the Fuglede's conjecture holds for these one-dimensional product-form like sets, as stated in Theorem \ref{1.3}.

\bigskip
{\bf Competing Interests:} {The authors have no conflicts of interest related to the content of this paper. }

\bibliographystyle{alpha}
\bibliography{Spectral}

@article {An2019,
	AUTHOR = {An, L. X. and Lau, K. S.},
	TITLE = {Characterization of a class of planar self-affine tile digit
	sets},
	JOURNAL = {Trans. Amer. Math. Soc.},
	FJOURNAL = {Transactions of the American Mathematical Society},
	VOLUME = {371},
	YEAR = {2019},
	NUMBER = {11},
	PAGES = {7627--7650},
	ISSN = {0002-9947,1088-6850},
	MRCLASS = {11B75 (11A63 28A80 52C22)},
	MRNUMBER = {3955530},
	MRREVIEWER = {Ben\ Joseph\ Green},
	DOI = {10.1090/tran/7481},
	URL = {https://doi.org/10.1090/tran/7481},
}

@article {An2021,
	AUTHOR = {An, L. X. and Wang, C.},
	TITLE = {On self-similar spectral measures},
	JOURNAL = {J. Funct. Anal.},
	FJOURNAL = {Journal of Functional Analysis},
	VOLUME = {280},
	YEAR = {2021},
	NUMBER = {3},
	PAGES = {Paper No. 108821, 31pp},
	ISSN = {0022-1236,1096-0783},
	MRCLASS = {42B10 (28A80 42A85)},
	MRNUMBER = {4170789},
	MRREVIEWER = {Jinjun\ Li},
	DOI = {10.1016/j.jfa.2020.108821},
	URL = {https://doi.org/10.1016/j.jfa.2020.108821},
}

@article {AL_2023,
	AUTHOR = {An, L. X. and Lai, C. K.},
	TITLE = {Arbitrarily sparse spectra for self-affine spectral measures.},
	JOURNAL = {Analysis Math.},
	FJOURNAL = {Analysis Mathematics},
	VOLUME = {49},
	YEAR = {2023},
	PAGES = {19--42},
	ISSN = {},
	MRCLASS = {},
	MRNUMBER = {},
	MRREVIEWER = {},
	DOI = {},
	URL = {},
}

@article {An-He-Lai2023,
	AUTHOR = {An, L. X. and He, X. G. and Lai, C. K.},
	TITLE = {Classification of spectral self-similar measures with
	four-digit elements},
	JOURNAL = {Asian J. Math.},
	FJOURNAL = {Asian Journal of Mathematics},
	VOLUME = {27},
	YEAR = {2023},
	NUMBER = {4},
	PAGES = {467--492},
	ISSN = {1093-6106,1945-0036},
	MRCLASS = {42B10 (28A80 42C30)},
	MRNUMBER = {4775556},
	DOI = {10.4310/ajm.2023.v27.n4.a2},
	URL = {https://doi.org/10.4310/ajm.2023.v27.n4.a2},
}

@article {Chen-Liu-Zheng2024,
	AUTHOR = {Chen, M. L. and Liu, J. C. and Zheng, J.},
	TITLE = {Tiling and spectrality for generalized {S}ierpinski
	self-affine sets},
	JOURNAL = {J. Geom. Anal.},
	FJOURNAL = {Journal of Geometric Analysis},
	VOLUME = {34},
	YEAR = {2024},
	NUMBER = {1},
	PAGES = {Paper No. 5, 32pp},
	ISSN = {1050-6926,1559-002X},
	MRCLASS = {28A25 (28A80 42C05 46C05)},
	MRNUMBER = {4658596},
	MRREVIEWER = {Richard\ Becker},
	DOI = {10.1007/s12220-023-01447-y},
	URL = {https://doi.org/10.1007/s12220-023-01447-y},
}

@article {CM,
	AUTHOR = {Coven, E. M. and Meyerowitz, A.},
	TITLE = {Tiling the integers with translates of one finite set},
	JOURNAL = {J. Algebra},
	FJOURNAL = {Journal of Algebra},
	VOLUME = {212},
	YEAR = {1999},
	NUMBER = {1},
	PAGES = {161--174},
	ISSN = {0021-8693,1090-266X},
	MRCLASS = {11B75},
	MRNUMBER = {1670646},
	MRREVIEWER = {Mihail\ N.\ Kolountzakis},
	DOI = {10.1006/jabr.1998.7628},
	URL = {https://doi.org/10.1006/jabr.1998.7628},
}

@article {Dai2012,
	AUTHOR = {Dai, X. R.},
	TITLE = {When does a {B}ernoulli convolution admit a spectrum?},
	JOURNAL = {Adv. Math.},
	FJOURNAL = {Advances in Mathematics},
	VOLUME = {231},
	YEAR = {2012},
	NUMBER = {3-4},
	PAGES = {1681--1693},
	ISSN = {0001-8708,1090-2082},
	MRCLASS = {42B05 (28A78 28A80 42A16 42A65 42C30)},
	MRNUMBER = {2964620},
	MRREVIEWER = {Walter\ Schempp},
	DOI = {10.1016/j.aim.2012.06.026},
	URL = {https://doi.org/10.1016/j.aim.2012.06.026},
}

@article {Dai-He-Lau_2014,
	AUTHOR = {Dai, X. R. and He, X. G. and Lau, K. S.},
	TITLE = {On spectral {$N$}-{B}ernoulli measures},
	JOURNAL = {Adv. Math.},
	FJOURNAL = {Advances in Mathematics},
	VOLUME = {259},
	YEAR = {2014},
	PAGES = {511--531},
	ISSN = {0001-8708,1090-2082},
	MRCLASS = {28A80 (42C05)},
	MRNUMBER = {3197665},
	MRREVIEWER = {Charlene\ Kalle},
	DOI = {10.1016/j.aim.2014.03.026},
	URL = {https://doi.org/10.1016/j.aim.2014.03.026},
}

@article {Dut-Hau-Lai2019,
	AUTHOR = {Dutkay, D. E. and Haussermann, J. and Lai, C. K.},
	TITLE = {Hadamard triples generate self-affine spectral measures},
	JOURNAL = {Trans. Amer. Math. Soc.},
	FJOURNAL = {Transactions of the American Mathematical Society},
	VOLUME = {371},
	YEAR = {2019},
	NUMBER = {2},
	PAGES = {1439--1481},
	ISSN = {0002-9947,1088-6850},
	MRCLASS = {42B05 (28A25 42A85)},
	MRNUMBER = {3885185},
	MRREVIEWER = {Xing-Gang\ He},
	DOI = {10.1090/tran/7325},
	URL = {https://doi.org/10.1090/tran/7325},
}

@article {DJ_2007,
	AUTHOR = {Dutkay, D. E. and Jorgensen, P. E. T.},
	TITLE = {Fourier frequencies in affine iterated function systems},
	JOURNAL = {J. Funct. Anal.},
	FJOURNAL = {Journal of Functional Analysis},
	VOLUME = {247},
	YEAR = {2007},
	NUMBER = {1},
	PAGES = {110--137},
	ISSN = {0022-1236,1096-0783},
	MRCLASS = {42B05 (28A80 37A99 37B10)},
	MRNUMBER = {2319756},
	MRREVIEWER = {St\'ephane\ Seuret},
	DOI = {10.1016/j.jfa.2007.03.002},
	URL = {https://doi.org/10.1016/j.jfa.2007.03.002},
}

@article {DL_2014,
	AUTHOR = {Dutkay, D. E. and Lai, C. K.},
	TITLE = {Uniformity of measures with {F}ourier frames},
	JOURNAL = {Adv. Math.},
	FJOURNAL = {Advances in Mathematics},
	VOLUME = {252},
	YEAR = {2014},
	PAGES = {684--707},
	ISSN = {0001-8708,1090-2082},
	MRCLASS = {28A80 (42C15)},
	MRNUMBER = {3144246},
	MRREVIEWER = {Hua\ Qiu},
	DOI = {10.1016/j.aim.2013.11.012},
	URL = {https://doi.org/10.1016/j.aim.2013.11.012},
}

@article {DJ_2009,
	AUTHOR = {Dutkay, D. E. and Jorgensen, P. E. T.},
	TITLE = {Probability and {F}ourier duality for affine iterated function
	systems},
	JOURNAL = {Acta Appl. Math.},
	FJOURNAL = {Acta Applicandae Mathematicae},
	VOLUME = {107},
	YEAR = {2009},
	NUMBER = {1-3},
	PAGES = {293--311},
	ISSN = {0167-8019,1572-9036},
	MRCLASS = {37A50 (28C15 42B35 42C05)},
	MRNUMBER = {2520021},
	MRREVIEWER = {Jan-Olav\ R\"onning},
	DOI = {10.1007/s10440-008-9384-2},
	URL = {https://doi.org/10.1007/s10440-008-9384-2},
}

@article {DJ_2009_2,
	AUTHOR = {Dutkay, D. E. and Jorgensen, P. E. T.},
	TITLE = {Quasiperiodic spectra and orthogonality for iterated function
	system measures},
	JOURNAL = {Math. Z.},
	FJOURNAL = {Mathematische Zeitschrift},
	VOLUME = {261},
	YEAR = {2009},
	NUMBER = {2},
	PAGES = {373--397},
	ISSN = {0025-5874,1432-1823},
	MRCLASS = {28A80 (42C05)},
	MRNUMBER = {2457304},
	MRREVIEWER = {Kasso\ A.\ Okoudjou},
	DOI = {10.1007/s00209-008-0329-2},
	URL = {https://doi.org/10.1007/s00209-008-0329-2},
}

@article {Fan2019,
	AUTHOR = {Fan, A. H.  and Fan, S. L. and Liao, L. M. and Shi, R. X.},
	TITLE = {Fuglede's conjecture holds in {$\Bbb {Q}_p$}},
	JOURNAL = {Math. Ann.},
	FJOURNAL = {Mathematische Annalen},
	VOLUME = {375},
	YEAR = {2019},
	NUMBER = {1-2},
	PAGES = {315--341},
	ISSN = {0025-5831,1432-1807},
	MRCLASS = {43A99 (05B45 26E30)},
	MRNUMBER = {4000244},
	DOI = {10.1007/s00208-019-01867-8},
	URL = {https://doi.org/10.1007/s00208-019-01867-8},

}

@book {Falconer_1990,
	AUTHOR = {Falconer, K.},
	TITLE = {Fractal geometry},
	NOTE = {Mathematical foundations and applications},
	PUBLISHER = {John Wiley \& Sons, Ltd., Chichester},
	YEAR = {1990},
	PAGES = {xxii+288},
	ISBN = {0-471-92287-0},
	MRCLASS = {28A80 (00A69 11K55 28-01 58F13 60G18)},
	MRNUMBER = {1102677},
	MRREVIEWER = {Christoph\ Bandt},
}

@article {Fu-He-Lau2015,
	AUTHOR = {Fu, X. Y. and He, X. G. and Lau, K. S.},
	TITLE = {Spectrality of self-similar tiles},
	JOURNAL = {Constr. Approx.},
	FJOURNAL = {Constructive Approximation. An International Journal for
	Approximations and Expansions},
	VOLUME = {42},
	YEAR = {2015},
	NUMBER = {3},
	PAGES = {519--541},
	ISSN = {0176-4276,1432-0940},
	MRCLASS = {42C15 (28A80)},
	MRNUMBER = {3416166},
	MRREVIEWER = {Chun-Kit\ Lai},
	DOI = {10.1007/s00365-015-9306-2},
	URL = {https://doi.org/10.1007/s00365-015-9306-2},
}

@article {Fuglede1974,
	AUTHOR = {Fuglede, B.},
	TITLE = {Commuting self-adjoint partial differential operators and a
	group theoretic problem},
	JOURNAL = {J. Funct. Anal.},
	FJOURNAL = {Journal of Functional Analysis},
	VOLUME = {16},
	YEAR = {1974},
	PAGES = {101--121},
	ISSN = {0022-1236},
	MRCLASS = {47F05 (81.47)},
	MRNUMBER = {470754},
	DOI = {10.1016/0022-1236(74)90072-x},
	URL = {https://doi.org/10.1016/0022-1236(74)90072-x},
}

@article {HLL_2013,
	AUTHOR = {He, X. G.  and Lai, C. K. and Lau, K. S.},
	TITLE = {Exponential spectra in {$L^2(\mu)$}},
	JOURNAL = {Appl. Comput. Harmon. Anal.},
	FJOURNAL = {Applied and Computational Harmonic Analysis. Time-Frequency
	and Time-Scale Analysis, Wavelets, Numerical Algorithms, and
	Applications},
	VOLUME = {34},
	YEAR = {2013},
	NUMBER = {3},
	PAGES = {327--338},
	ISSN = {1063-5203,1096-603X},
	MRCLASS = {42C15 (46E30)},
	MRNUMBER = {3027906},
	MRREVIEWER = {Keri\ A.\ Kornelson},
	DOI = {10.1016/j.acha.2012.05.003},
	URL = {https://doi.org/10.1016/j.acha.2012.05.003},
}

@article{IMP_2017,
    AUTHOR = {Iosevich, A. and Mayeli, A. and Pakianathan, J.},
    TITLE = {The {F}uglede conjecture holds in $\mathbb{Z}_p \times \mathbb{Z}_p$},
    JOURNAL = {Anal. PDE},
    FJOURNAL = {Analysis \& PDE},
    VOLUME = {10},
    YEAR = {2017},
    NUMBER = {4},
    PAGES = {757--764},
    ISSN = {2157-5045,1948-206X},
	MRCLASS = {11T30 (05A18 41A10 42B05 52C20)},
	MRNUMBER = {3649367},
	MRREVIEWER = {B\'ela\ Uhrin},
	DOI = {10.2140/apde.2017.10.757},
	URL = {https://doi.org/10.2140/apde.2017.10.757},
}

@article {Jorgensen-Pedersen_1998,
	AUTHOR = {Jorgensen, P. E. T. and Pedersen, S.},
	TITLE = {Dense analytic subspaces in fractal {$L^2$}-spaces},
	JOURNAL = {J. Anal. Math.},
	FJOURNAL = {Journal d'Analyse Math\'ematique},
	VOLUME = {75},
	YEAR = {1998},
	PAGES = {185--228},
	ISSN = {0021-7670,1565-8538},
	MRCLASS = {46E30 (28A75 42C05 46L55 47B38)},
	MRNUMBER = {1655831},
	MRREVIEWER = {Javier\ Soria},
	DOI = {10.1007/BF02788699},
	URL = {https://doi.org/10.1007/BF02788699},
}

@incollection {Ken1992,
	AUTHOR = {Kenyon, R.},
	TITLE = {Self-replicating tilings},
	BOOKTITLE = {Symbolic dynamics and its applications ({N}ew {H}aven, {CT},
	1991)},
	SERIES = {Contemp. Math.},
	VOLUME = {135},
	PAGES = {239--263},
	PUBLISHER = {Amer. Math. Soc., Providence, RI},
	YEAR = {1992},
	ISBN = {0-8218-5146-2},
	MRCLASS = {52C22 (05B45)},
	MRNUMBER = {1185093},
	MRREVIEWER = {Carl\ E.\ Linderholm},
	DOI = {10.1090/conm/135/1185093},
	URL = {https://doi.org/10.1090/conm/135/1185093},
}

@article {Laba2001,
	AUTHOR = {{\L}aba, I.},
	TITLE = {Fuglede's conjecture for a union of two intervals},
	JOURNAL = {Proc. Amer. Math. Soc.},
	FJOURNAL = {Proceedings of the American Mathematical Society},
	VOLUME = {129},
	YEAR = {2001},
	NUMBER = {10},
	PAGES = {2965--2972},
	ISSN = {0002-9939,1088-6826},
	MRCLASS = {42A99},
	MRNUMBER = {1840101},
	MRREVIEWER = {Steen\ Pedersen},
	DOI = {10.1090/S0002-9939-01-06035-X},
	URL = {https://doi.org/10.1090/S0002-9939-01-06035-X},
}

@article {Lai2017,
	AUTHOR = {Lai, C. K. and Lau, K. S. and Rao, H.},
	TITLE = {Classification of tile digit sets as product-forms},
	JOURNAL = {Trans. Amer. Math. Soc.},
	FJOURNAL = {Transactions of the American Mathematical Society},
	VOLUME = {369},
	YEAR = {2017},
	NUMBER = {1},
	PAGES = {623--644},
	ISSN = {0002-9947,1088-6850},
	MRCLASS = {52C22 (11A63 11B75 28A80)},
	MRNUMBER = {3557788},
	MRREVIEWER = {Benoit\ Loridant},
	DOI = {10.1090/tran/6703},
	URL = {https://doi.org/10.1090/tran/6703},
}

@article {Lau2008,
	AUTHOR = {Hu, T. Y. and Lau, K. S.},
	TITLE = {Spectral property of the {B}ernoulli convolutions},
	JOURNAL = {Adv. Math.},
	FJOURNAL = {Advances in Mathematics},
	VOLUME = {219},
	YEAR = {2008},
	NUMBER = {2},
	PAGES = {554--567},
	ISSN = {0001-8708,1090-2082},
	MRCLASS = {42C05 (28A78 28A80 42A65)},
	MRNUMBER = {2435649},
	MRREVIEWER = {Sze-Man\ Ngai},
	DOI = {10.1016/j.aim.2008.05.004},
	URL = {https://doi.org/10.1016/j.aim.2008.05.004},
}

@article {LW_1996,
	AUTHOR = {Lagarias, J. C. and Wang, Y.},
	TITLE = {Self-affine tiles in {$\mathbb{R}^n$}},
	JOURNAL = {Adv. Math.},
	FJOURNAL = {Advances in Mathematics},
	VOLUME = {121},
	YEAR = {1996},
	NUMBER = {1},
	PAGES = {21--49},
	ISSN = {0001-8708,1090-2082},
	MRCLASS = {52C22 (11A63)},
	MRNUMBER = {1399601},
	MRREVIEWER = {Richard\ Kenyon},
	DOI = {10.1006/aima.1996.0045},
	URL = {https://doi.org/10.1006/aima.1996.0045},
}

@article {LW_1996_2,
	AUTHOR = {Lagarias, J. C. and Wang, Y.},
	TITLE = {Integral self-affine tiles in {$\mathbb{R}^n$}. {I}. {S}tandard
	and nonstandard digit sets},
	JOURNAL = {J. London Math. Soc. (2)},
	FJOURNAL = {Journal of the London Mathematical Society. Second Series},
	VOLUME = {54},
	YEAR = {1996},
	NUMBER = {1},
	PAGES = {161--179},
	ISSN = {0024-6107,1469-7750},
	MRCLASS = {52C22 (39B52)},
	MRNUMBER = {1395075},
	MRREVIEWER = {Mihail\ N.\ Kolountzakis},
	DOI = {10.1112/jlms/54.1.161},
	URL = {https://doi.org/10.1112/jlms/54.1.161},
}

@article {Lag1997,
    AUTHOR = {Lagarias, J. C. and Wang, Y.},
     TITLE = {Integral self-affine tiles in {$\mathbb{R}^n$}. {II}. {L}attice
              tilings},
   JOURNAL = {J. Fourier Anal. Appl.},
  FJOURNAL = {The Journal of Fourier Analysis and Applications},
    VOLUME = {3},
      YEAR = {1997},
    NUMBER = {1},
     PAGES = {83--102},
      ISSN = {1069-5869,1531-5851},
   MRCLASS = {52C22},
  MRNUMBER = {1428817},
MRREVIEWER = {Mihail\ N.\ Kolountzakis},
       DOI = {10.1007/s00041-001-4051-2},
       URL = {https://doi.org/10.1007/s00041-001-4051-2},
}

@article {Lev2022,
	AUTHOR = {Lev, N. and Matolcsi, M.},
	TITLE = {The {F}uglede conjecture for convex domains is true in all
	dimensions},
	JOURNAL = {Acta Math.},
	FJOURNAL = {Acta Mathematica},
	VOLUME = {228},
	YEAR = {2022},
	NUMBER = {2},
	PAGES = {385--420},
	ISSN = {0001-5962,1871-2509},
	MRCLASS = {52A05 (28A75 42B10)},
	MRNUMBER = {4448683},
	MRREVIEWER = {Grigory\ M.\ Ivanov},
	DOI = {10.4310/acta.2022.v228.n2.a3},
	URL = {https://doi.org/10.4310/acta.2022.v228.n2.a3},
}

@article {LaW_2002,
	AUTHOR = {{\L}aba, I. and Wang, Y.},
	TITLE = {On spectral {C}antor measures},
	JOURNAL = {J. Funct. Anal.},
	FJOURNAL = {Journal of Functional Analysis},
	VOLUME = {193},
	YEAR = {2002},
	NUMBER = {2},
	PAGES = {409--420},
	ISSN = {0022-1236,1096-0783},
	MRCLASS = {28A80},
	MRNUMBER = {1929508},
	MRREVIEWER = {Henning\ Fernau},
	DOI = {10.1006/jfan.2001.3941},
	URL = {https://doi.org/10.1006/jfan.2001.3941},
}

@article {LR_2025,
	AUTHOR = {Li, Q. and Rao, H.},
	TITLE = {Characterization of self-affine tile digit sets on {$\Bbb
	R^n$}},
	JOURNAL = {Nonlinearity},
	FJOURNAL = {Nonlinearity},
	VOLUME = {38},
	YEAR = {2025},
	NUMBER = {9},
	PAGES = {Paper No. 095011, 14pp},
	ISSN = {0951-7715,1361-6544},
	MRCLASS = {52C23 (28A80)},
	MRNUMBER = {4957830},
	DOI = {10.1088/1361-6544/adffdd},
	URL = {https://doi.org/10.1088/1361-6544/adffdd},
}

@article {Li2024,
    AUTHOR = {Li, W. X. and Miao, J. J. and Wang, Z. Q.},
     TITLE = {Spectrality of infinite convolutions and random convolutions},
   JOURNAL = {J. Funct. Anal.},
  FJOURNAL = {Journal of Functional Analysis},
    VOLUME = {287},
      YEAR = {2024},
    NUMBER = {7},
     PAGES = {Paper No. 110539, 35pp},
      ISSN = {0022-1236,1096-0783},
   MRCLASS = {42C30 (28A80)},
  MRNUMBER = {4760209},
MRREVIEWER = {Zhi-Yi\ Wu},
       DOI = {10.1016/j.jfa.2024.110539},
       URL = {https://doi.org/10.1016/j.jfa.2024.110539},
}

@article {Liu-Peng-Wu_2019,
	AUTHOR = {Liu, J. C. and Peng, R. G. and Wu, H. H.},
	TITLE = {Spectral properties of self-similar measures with product-form
	digit sets},
	JOURNAL = {J. Math. Anal. Appl.},
	FJOURNAL = {Journal of Mathematical Analysis and Applications},
	VOLUME = {473},
	YEAR = {2019},
	NUMBER = {1},
	PAGES = {479--489},
	ISSN = {0022-247X,1096-0813},
	MRCLASS = {28A80},
	MRNUMBER = {3912833},
	MRREVIEWER = {Paul\ Surer},
	DOI = {10.1016/j.jmaa.2018.12.062},
	URL = {https://doi.org/10.1016/j.jmaa.2018.12.062},
}

@article {Matolcsi2005,
	AUTHOR = {Matolcsi, M.},
	TITLE = {Fuglede's conjecture fails in dimension 4},
	JOURNAL = {Proc. Amer. Math. Soc.},
	FJOURNAL = {Proceedings of the American Mathematical Society},
	VOLUME = {133},
	YEAR = {2005},
	NUMBER = {10},
	PAGES = {3021--3026},
	ISSN = {0002-9939,1088-6826},
	MRCLASS = {11H31 (42B05 52C22)},
	MRNUMBER = {2159781},
	MRREVIEWER = {B\'ela\ Uhrin},
	DOI = {10.1090/S0002-9939-05-07874-3},
	URL = {https://doi.org/10.1090/S0002-9939-05-07874-3},
}

@article {Shi_2019,
	AUTHOR = {Shi, R. X.},
	TITLE = {Fuglede's conjecture holds on cyclic groups {$\Bbb Z_{pqr}$}},
	JOURNAL = {Discrete Anal.},
	FJOURNAL = {Discrete Analysis},
    PAGES = {Paper No. 14, 14pp},
	YEAR = {2019},
	ISSN = {2397-3129},
	MRCLASS = {42B10 (11B75 43A70)},
	MRNUMBER = {4025286},
	MRREVIEWER = {Li-Xiang\ An},
	DOI = {10.19086/da},
	URL = {https://doi.org/10.19086/da},
}

@article {Strichartz_2000,
	AUTHOR = {Strichartz, R. S.},
	TITLE = {Mock {F}ourier series and transforms associated with certain
	{C}antor measures},
	JOURNAL = {J. Anal. Math.},
	FJOURNAL = {Journal d'Analyse Math\'ematique},
	VOLUME = {81},
	YEAR = {2000},
	PAGES = {209--238},
	ISSN = {0021-7670,1565-8538},
	MRCLASS = {42A38 (28A75 42C05)},
	MRNUMBER = {1785282},
	MRREVIEWER = {Steen\ Pedersen},
	DOI = {10.1007/BF02788990},
	URL = {https://doi.org/10.1007/BF02788990},
}

@article {Strichartz_2006,
	AUTHOR = {Strichartz, R. S.},
	TITLE = {Convergence of Mock {F}ourier series},
	JOURNAL = {J. Anal. Math.},
	FJOURNAL = {Journal d'Analyse Math\'ematique},
	VOLUME = {99},
	YEAR = {2006},
	PAGES = {333--353},
	ISSN = {0021-7670,1565-8538},
	MRCLASS = {},
	MRNUMBER = {},
	MRREVIEWER = {},
	DOI = {},
	URL = {},
}

@article {Tao2003,
	AUTHOR = {Tao, T.},
	TITLE = {Fuglede's conjecture is false in 5 and higher dimensions},
	JOURNAL = {Math. Res. Lett.},
	FJOURNAL = {Mathematical Research Letters},
	VOLUME = {11},
	YEAR = {2004},
	NUMBER = {2-3},
	PAGES = {251--258},
	ISSN = {1073-2780},
	MRCLASS = {42B99 (43A45 46E30 52C22)},
	MRNUMBER = {2067470},
	MRREVIEWER = {B\'ela\ Uhrin},
	DOI = {10.4310/MRL.2004.v11.n2.a8},
	URL = {https://doi.org/10.4310/MRL.2004.v11.n2.a8},
}

@article {Wu2024,
	AUTHOR = {Wu, H. H.},
	TITLE = {Spectral self-similar measures with alternate contraction
	ratios and consecutive digits},
	JOURNAL = {Adv. Math.},
	FJOURNAL = {Advances in Mathematics},
	VOLUME = {443},
	YEAR = {2024},
	PAGES = {Paper No. 109585, 33pp},
	ISSN = {0001-8708,1090-2082},
	MRCLASS = {28A80 (42B10 42C05)},
	MRNUMBER = {4712262},
	MRREVIEWER = {Ming-Liang\ Chen},
	DOI = {10.1016/j.aim.2024.109585},
	URL = {https://doi.org/10.1016/j.aim.2024.109585},
}

\end{document}